\font\tenmsb=msbm10 \font\sevenmsb=msbm7 \font\fivemsb=msbm5
\let\amstexloaded@\relax\fi
\def\spaces@{\space\space\space\space\space}
\def\spaces@@{\spaces@\spaces@\spaces@\spaces@\spaces@}
\def\space@.{\futurelet\space@\relax}
\def\Err@#1{\errhelp\defaulthelp@\errmessage{AmS-TeX error: #1}}
\def\relaxnext@{\let\next\relax}
\def\accentfam@{7}
\def\noaccents@{\def\accentfam@{0}}
\def\Cal{\relaxnext@\ifmmode\let\next\Cal@\else
\def\next{\Err@{Use \string\Cal\space only in math mode}}\fi\next}
\def\Cal@#1{{\Cal@@{#1}}}
\def\Cal@@#1{\noaccents@\fam\tw@#1}
\def\Bbb{\relaxnext@\ifmmode\let\next\Bbb@\else
\def\next{\Err@{Use \string\Bbb\space only in math mode}}\fi\next}
\def\Bbb@#1{{\Bbb@@{#1}}}
\def\Bbb@@#1{\noaccents@\fam\msbfam#1}
\def\NN{{\mathbb{N}}}
\def\RR{{\mathbb{ R}}}
\def\bp{{\mathbf{p}}}
\def\bx{{\mathbf{x}}}
\def\bm{{\mathrm{M}}}
\def\nn{{ \nonumber}}
\def\beq{\begin{equation}}
\def\eeq{\end{equation}}
\def\qedbox{$\rlap{$\sqcap$}\sqcup$}
\newtheorem{Theorem}{Theorem}[section]
\newtheorem{Lemma}{Lemma}[section]
\newtheorem{Proposition}{Proposition}[section]
\newtheorem{Corollary}{Corollary}[section]
\newtheorem{Remark}{Remark}[section]
\newtheorem{Definition}{Definition}[section]
\begin{document}

\title{ Intermittent behaviors in weakly coupled map lattices \footnote{This work is supported by NSFC of China (Grants 11031003, 11271183, 11471074), Simons Foundation.}
\author{
    Tiexiang Li\footnote{Department of Mathematics, Southeast Univeristy, Nanjing, China, txli@seu.edu.cn},\ \  Wen-wei Lin\footnote{Department of Mathematics, National Chiao Tung University, Hsinchu, 300, Taiwan, wwlin@math.nctu.edu.tw},\ \  Yiqian Wang\footnote{The corresponding author.}\ \footnote{Department of Mathematics,
Nanjing University, Nanjing, China, yiqianw@nju.edu.cn}  \ \ and
  \ Shing-Tung Yau\footnote{Department of Mathematics, Harvard University, Boston, United states, yau@math.harvard.edu
}
  }\date{}}\maketitle

  \begin{abstract}
  In this paper, we study intermittent behaviors of coupled piecewise-expanding map lattices with two nodes and a weak coupling. We show that the successive phase transition between ordered and disordered phases occurs for almost every orbit.  That is, we prove $\liminf_{n\rightarrow \infty}| x_1(n)-x_2(n)|=0$ and  $\limsup_{n\rightarrow \infty}| x_1(n)-x_2(n)|\ge c_0>0$, where $x_1(n), x_2(n)$ correspond to the coordinates of two nodes at the iterative step $n$. We also prove the same conclusion for weakly coupled tent-map lattices with any multi-nodes.
\end{abstract}

\vskip 0.3cm
\section{Introduction}

In this paper, we study the intermittent dynamical behavior of weakly-coupled piecewise-expanding map lattices. Let $f: [0, 1]\rightarrow [0, 1]$ be a piecewise expanding map,
 $I$ be the $m\times m$ identity matrix and
$A$ be an $m\times m$ symmetric matrix satisfying $A\mathbf{e}=0$,  where $\mathbf{e}= [1,\cdots, 1]^\top$. Consider the dynamical system defined by a coupled map lattice:
\beq\label{multi-node}
T:\quad \bx(n+1)=(I+cA)  \mathbf{f}(\bx(n)),
\eeq
where $c$ is the coupling coefficient, $\bx(n)=[x_1(n),\cdots, x_m(n)]^\top\in [0, 1]^m$ for  $n\in\NN\cup \{0\}$ and $\mathbf{f}(\bx(n))=[f(x_1(n)),\cdots, f(x_m(n))]^\top$.   In case of no confusion, we also use bold letters $\bx$ or $\bp=(x_1,\cdots,x_m)$ to denote points in $[0,1]^m$.

Because of $A\mathbf{e}=0$, it can be easily seen that the diagonal $D_{\mathrm{syn}}=\{(x_1,\cdots, x_m)\in [0, 1]^m\mid x_1=\cdots=x_m\}$ is an invariant set for synchronized points of $T$. An interesting question on the dynamical behavior of the coupled map lattice (\ref{multi-node}) can be raised as whether $D_{\mathrm{syn}}$ is a global attractor, or equivalently, whether synchronization occurs for (\ref{multi-node}). There have been plenty of results on the study of synchronization when $f$ generates a chaotic dynamical system. Common examples include the tent maps and the Logistic maps, one can see \cite{Be-Ca,Li-Yorke,Young} and references therein. It has been shown in these results that chaotic synchronization can occur only if $c$ is far from zero.
That is, chaotic synchronization can not occur for small coupling strength.

However, a more complicated phenomenon has been found by numerical simulations when $c$ is small, i.e., when the coupling strength is weak. Roughly speaking, it is found that a typical orbit can enter into and exits slowly from an arbitrarily small neighborhood of $D_{\rm syn}$ for infinite times. In other word, the successive phase transition between being close to the diagonal and being far from the diagonal can happen. We call this phenomenon  as pseudo-synchronization.

The pseudo-synchronization is closely related to the clustering phenomenon in global coupled map lattices by  Kaneko et al. \cite{FKa,Ka1,Ka2,Ka3,Ka4,Ka5,KaY}. In numerical experiments, it showed that when (\ref{multi-node}) is a globally coupling system with large $m$, elements differentiate into some clusters, and  elements in each cluster oscillate synchronously, while the behaviors in different clusters are various. Moreover, the differentiation by clustering is a temporal  behavior in nature \cite{KaY}.  One can easily see that the pseudo-synchronization is a special case of the temporal clustering. In fact,  the temporal clustering is also found in all systems of (\ref{multi-node}) with small $c$. Similar behaviors were also widely explored  in weakly coupled continuous-time chaotic systems. For example, the successive phase transition between bursting and spiking was discovered in the study of epilepsy, see \cite{GNHL,GHZ} and references therein. More related results are shown in \cite{BKOVZ,HI,Pe,PCJFM} and references therein. To provide a mathematical proof for the mechanism of pseudo-synchronization for weakly-coupled map lattices is one of motivations of this paper.

On the other hand, there are a series of mathematical results on dynamical behaviors of weakly-coupled map lattices. In \cite{Keller1},  Keller showed that the existence of unique absolutely continuous invariant measure for weakly-coupled tent maps. Keller and Liverani \cite{Keller-Liverani0} proved the existence of the unique SRB measure for a wide range of multi-dimensional weakly coupled map lattices. They also showed the exponential decay of correlations in time and space in some one-dimensional lattices of weakly coupled piecewise expanding
interval maps \cite {Keller-Liverani1}.
More further results can be found in \cite{Bardet-Keller,Keller-Kunzle,Liverani1} and references therein. Another motivation of this paper is thus  to provide more informations on dynamical behaviors of weakly-coupled map lattices.

In this paper, we will prove the occurrence of successive phase transitions for almost every point in the sense of Lebesgue measure
for the following weakly-coupled map lattices, where $f$ is the tent map or its perturbation.

In (\ref{multi-node}), when $m=2$,  we have
the following coupled map lattice:
\begin{equation}\label{two-node-model}
T: \begin{cases}
&x_1(n+1)=(1-c)f(x_1(n))+cf(x_2(n))\\
&x_2(n+1)=cf(x_1(n))+(1-c)f(x_2(n))
\end{cases}.
\end{equation}
Let $\mathrm{dist} (A, B)$ denote the distance between two points/sets $A$ and $B$.
Numerical simulation shows that when $f(x)$ is piecewise-expanding and close to the standard tent map, and $c$ is smaller than some $c_+>0$, the pseudo-synchronization occurs for the system (\ref{two-node-model}). That is,
\begin{equation}\label{order-part}
\liminf_{n\rightarrow \infty} \mathrm{dist} (\bx(n), D_{\mathrm{syn}})=0,
\end{equation}
and
\begin{equation}\label{disorder-part}
\limsup_{n\rightarrow \infty} \mathrm{dist} (\bx(n), D_{\mathrm{syn}})\ge \gamma_0>0,
\end{equation}
for some $\gamma_0>0$. In other word, the successive phase transition between the order phase (close to $D_{\mathrm{syn}}$) and the disorder phase (far from $D_{\mathrm{syn}}$) occurs for almost every orbit. Similar behaviors are also found for the multi-node cases.

Obviously, (\ref{order-part}) and (\ref{disorder-part}) are equivalent to the following equations, respectively,
$$
\liminf_{n\rightarrow \infty} | x_1(n)-x_2(n) | =0,
$$
and
$$
\limsup_{n\rightarrow \infty}  | x_1(n)-x_2(n) | \ge \gamma_0>0.
$$

In this  paper, we will provide such a series of mathematical proofs for Theorem \ref{main-theorem}-Theorem \ref{multi-node-linear} as follows.
\begin{Theorem}\label{main-theorem}
Consider the system (\ref{two-node-model}) with $f(x)=1-2 | x-1/2 | ,\ x\in [0, 1]$ being the standard tent map. There exists $0<c_+\le 1/4$ such that if the coupling coefficient $0\leq c<c_+$, then there exists a constant $\gamma_0>0$ such that for almost every initial point $(x_1(0),x_2(0))\in [0, 1]^2$, (\ref{order-part}) and (\ref{disorder-part})  hold true. \end{Theorem}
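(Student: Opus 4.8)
I would first change coordinates to the sum $v=x_1+x_2$ and the difference $u=x_1-x_2$. From (\ref{two-node-model}) one gets, with no approximation, $v(n+1)=f(x_1(n))+f(x_2(n))$ and $u(n+1)=(1-2c)(f(x_1(n))-f(x_2(n)))$. Because the tent map has slope $\pm2$ on each of $[0,\frac12]$ and $[\frac12,1]$, every step falls into one of two regimes. If $x_1(n)$ and $x_2(n)$ lie on the same monotone branch of $f$, then $|u(n+1)|=2(1-2c)|u(n)|$ while $v(n+1)=2v(n)$ or $4-2v(n)$, i.e.\ the midpoint $v/2$ obeys exactly the tent map. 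If they lie on opposite branches, then $f(x_1)-f(x_2)=\pm(2v-2)$, so $|u(n+1)|=2(1-2c)|v(n)-1|$, and one checks that being on opposite branches forces $|v(n)-1|\le|u(n)|$, hence still $|u(n+1)|\le 2(1-2c)|u(n)|$, but now with a potentially enormous contraction when $v(n)\approx1$, i.e.\ when $x_1(n),x_2(n)$ are near-mirror images about $\frac12$. Since $0\le c<c_+\le\frac14$ makes $2(1-2c)>1$, the difference $|u|$ is \emph{expanded} away from the opposite-branch locus, which inside any thin strip $\{|u|<\gamma\}$ is confined to a small neighbourhood of $(\frac12,\frac12)$; this already shows why $c_+\le\frac14$ is the natural threshold and explains the intermittency picture --- a typical orbit drifts away from $D_{\mathrm{syn}}$ by the transverse expansion until, through the global folding of $f$, it lands on the opposite-branch locus near $(\frac12,\frac12)$ and is thrown back close to $D_{\mathrm{syn}}$, after which the process repeats.

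\textbf{Turning this into an a.e.\ statement.} The cleanest rigorous route is to invoke the statistical regularity of $T$. For $0\le c<c_+$ the map $T$ is a piecewise affine map of $[0,1]^2$ which is uniformly expanding (the singular values of $DT$ are $2$ and $2(1-2c)>1$), with the $4$-rectangle partition cut by $\{x_1=\frac12\}$ and $\{x_2=\frac12\}$; for $c=0$ it is the direct product of two standard tent maps, hence exact with invariant measure $\mathrm{Leb}^2$, and for small $c>0$ the transfer-operator theory of Keller and Keller--Liverani for weakly coupled expanding interval maps \cite{Keller1,Keller-Liverani0,Keller-Liverani1} provides a unique ergodic absolutely continuous invariant measure $\mu_c$ whose density is bounded above and below, with $\mu_c\to\mathrm{Leb}^2$ as $c\to0$. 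Since the density of $\mu_c$ is bounded below, $\mu_c$ is equivalent to Lebesgue measure, so $\mu_c(\{|u|<\delta\})>0$ for every $\delta>0$ and $\mu_c(\{|u|>\gamma_0\})>0$ for, say, $\gamma_0=\frac12$. Applying Birkhoff's ergodic theorem to the indicator functions of the countably many sets $\{|u|<\tfrac1k\}$ and of $\{|u|>\gamma_0\}$, one gets that for $\mu_c$-almost every --- hence, by the lower density bound, Lebesgue-almost every --- initial point the orbit visits each of these sets infinitely often, which is precisely $\liminf_n|u(n)|=0$ and $\limsup_n|u(n)|\ge\gamma_0$.

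\textbf{Main obstacle.} The substantive content is the second paragraph, not the first: one must establish that the coupled map $T$ genuinely has an absolutely continuous invariant measure with density \emph{bounded below on all of} $[0,1]^2$. In two dimensions existence of an a.c.i.m.\ for a piecewise expanding map is not automatic, and here the expansion factor $2(1-2c)$ degrades to $1$ as $c\to c_+$; one must exploit that $T$ is a small perturbation of a product of two exact maps and control the transfer operator uniformly, in particular up to the diagonal --- where the strong transverse expansion actually helps, by smearing any concentration of mass, but where the $c=0$ limit is delicate because the product of two tent maps carries many invariant measures besides $\mathrm{Leb}^2$. If one instead wants a self-contained proof that also exhibits the mechanism, the same estimates resurface: for (\ref{disorder-part}) one shows directly that the trapped set $\bigcap_{n\ge0}T^{-n}\{|u|<\gamma_0\}$ near $D_{\mathrm{syn}}$ is Lebesgue-null, by organising it according to the branch itinerary and pulling back, along the bounded-distortion expanding map $T$, the thin conditions $\{|v-1|<\gamma_0(2(1-2c))^{-g}\}$ forced at each opposite-branch visit (where $g$ is the gap to the next such visit); and for (\ref{order-part}) one proves a dynamical Borel--Cantelli statement for the positive-measure sets $R_\delta=\{\,x_1,x_2\text{ on opposite branches},\ |x_1+x_2-1|<\delta,\ |x_1-x_2|>2\delta\,\}$, since $\bx(n)\in R_\delta$ forces $|u(n+1)|<2(1-2c)\delta$. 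Either way, controlling the distortion and the recurrence of $T$ uniformly near $D_{\mathrm{syn}}$ is the crux; the branch bookkeeping of the first paragraph is routine by comparison.
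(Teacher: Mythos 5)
Your coordinate computation and branch analysis in the first paragraph are correct, and your overall route (unique a.c.i.m.\ plus Birkhoff) is genuinely different from the paper's; indeed the paper's remark following Theorem \ref{multi-node-linear} concedes that for tent maps the result can be extracted from Propositions 5 and 7 of \cite{Keller1}. But as written your argument has a gap at precisely the step you flag and then leave unresolved: the passage from ``$\mu_c$-a.e.'' to ``Lebesgue-a.e.''. You justify it by asserting that Keller and Keller--Liverani give an invariant density \emph{bounded above and below} on all of $[0,1]^2$. The cited results (\cite{Keller1,Keller-Liverani0}) give existence, uniqueness and a spectral gap for the transfer operator in suitable function spaces; they do not state a pointwise lower bound for the two-dimensional invariant density, and in dimension two such positivity is not automatic --- it requires a separate covering/positivity (or asymptotic-stability of $P^n$ on all of $L^1$) argument showing that mass spreads over the whole square, including up to $D_{\mathrm{syn}}$ and in the degenerate limit $c\to 0$. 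Without that, $\mu_c$-genericity only controls Lebesgue-a.e.\ points of $\{h_c>0\}$, and the theorem is not proved. Your fallback ``self-contained'' sketch does not close this either: proving that the trapped set $\bigcap_{n\ge 0}T^{-n}\{|u|<\gamma_0\}$ is Lebesgue-null, and that Lebesgue-a.e.\ orbit hits $R_\delta$, are exactly the recurrence and measure estimates at stake; they are not routine consequences of uniform expansion because $T$ is non-invertible and images of sets are repeatedly cut by $\{x_1=\tfrac12\}$ and $\{x_2=\tfrac12\}$, so one must show quantitatively that expansion beats the cutting.

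That quantitative bookkeeping is the actual content of the paper's proof, which deliberately avoids transfer-operator machinery so that it survives nonlinear piecewise-expanding perturbations (Theorem \ref{nonlinear}), where the tent-map propositions of \cite{Keller1} no longer apply. Concretely, the paper's Iteration Lemma \ref{iteration-L} and Corollary \ref{Corollary} show that for a short slope-$\pm1$ segment the number of components created by the cuts grows slower than the length, so a definite fraction of the segment reaches components of length $\ge\delta_1$; Proposition \ref{step2-L2.2} then shows a long segment, under $T$ or $T^2$, keeps producing longer slope-$\pm1$ segments in single squares until one meets $G_\epsilon$, giving (\ref{order-part}) after a Fubini argument; and for (\ref{disorder-part}) Propositions \ref{step1} and \ref{step2} run the transverse expansion $2(1-2c)>1$ along $D_{\mathrm{syn}}$ while discarding the small proportion of ``non-regular'' points that fall into the fold region near $x_1+x_2=1$. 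If you want to rescue your ergodic-theoretic route for Theorem \ref{main-theorem}, you must either quote and verify the precise statements of \cite{Keller1} that yield equivalence of $\mu_c$ with Lebesgue (or $L^1$-asymptotic stability of the transfer operator), or supply the covering/positivity argument yourself; at present that is a missing step, not a citation.
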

\begin{Remark}
With a more careful estimate, the same conclusion as in Theorem \ref{main-theorem} can be proved for all $0\le c<1/4$ {\rm(}or $3/4<c\le 1${\rm)}. On the other hand, it is not difficult to prove that for each $1/4\le c\le 3/4$,  the synchronization occurs. Thus we conclude that $c=1/4$ is the bifurcation point between synchronization and successive phase transition.
\end{Remark}
\begin{Remark}\label{piecewise-linear}
The function $f(x)$ in the system (\ref{two-node-model}) need not be the standard tent map. We can prove that there exist constants $c_+, \alpha_+>0$ such that the same conclusion as in Theorem \ref{main-theorem} holds true
for the general tent map
\begin{equation}\label{gen-tent-map}
 f(x)=\begin{cases}
&(2-\alpha_1)x,\qquad 0<x\le 1/2-\alpha_2\\
&\frac{1-2\alpha_2}{1+2\alpha_2}  (2-\alpha_1)(1-x),\quad 1/2-\alpha_2<x\le 1
\end{cases}
\end{equation}
with $ | c | \le c_+$ and $ | \alpha_i | <\alpha_+,\ i=1, 2$. Moreover, the result can be extended to the general piecewise linear continuous function $f$ with slopes being large enough.

\end{Remark}

The most important contribution of this paper is that the conclusion in Theorem \ref{main-theorem} can be generalized to the case that $f$ is piecewise expanding. More precisely, we will prove that
\begin{Theorem}\label{nonlinear}
Let $f_0(x)=1-s|x-1/2|, x\in [0,1]$, where $s=2-s_0$ with $0\le s_0<1$.
 and $g(x)$ be a ${\cal C}^2$-smooth function on $x\in [0,1]$ such that   $f(x)=f_0(x)+g(x)\in [0,1]$ for each $x$.  Then there exist three small constants $c_+, s_{0+}, \eta>0$
such that if $0\le c< c_+$, $s_0\le s_{0+}$ and $\|g\|_{{\cal C}^2}<\eta$, there exists a constant $\gamma_0>0$ such that for almost every initial point $(x_1(0),x_2(0))\in [0,1]^2$, (\ref{order-part}) and (\ref{disorder-part})  hold true for the system (\ref{two-node-model}).
\end{Theorem}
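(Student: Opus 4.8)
The plan is to pass to sum/difference coordinates, exhibit a positive Lyapunov exponent transverse to $D_{\mathrm{syn}}$, construct an ergodic absolutely continuous invariant measure $\mu$ for the planar map, and then read off both (\ref{order-part}) and (\ref{disorder-part}) from the ergodicity of $\mu$ together with a description of $\mathrm{supp}\,\mu$ relative to the diagonal.

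First I would set $u=(x_1+x_2)/2$ and $v=x_1-x_2$, so that $\mathrm{dist}(\bx(n),D_{\mathrm{syn}})=|v(n)|/\sqrt2$, and rewrite (\ref{two-node-model}) as a planar map $\hat T(u,v)$. Two exact relations drive everything: $v(n+1)=(1-2c)\bigl(f(x_1(n))-f(x_2(n))\bigr)$ and $u(n+1)=\tfrac12\bigl(f(x_1(n))+f(x_2(n))\bigr)$. If $x_1(n),x_2(n)$ lie on the same monotonicity branch of $f=f_0+g$ --- which happens unless $u(n)$ is within $|v(n)|/2$ of the turning point $t^*$ of $f$ --- then, since $|f'|\ge 2-s_0-\eta$ there, one gets $|v(n+1)|\ge\lambda|v(n)|$ with $\lambda:=(1-2c)(2-s_0-\eta)>1$ once $c_+,s_{0+},\eta$ are small; on the same branch one also has $u(n+1)=f(u(n))+O(\eta\,v(n)^2)$, so while $|v|$ stays small the $u$-orbit shadows a genuine orbit of the one-dimensional map $f$. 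Thus $D_{\mathrm{syn}}$ is normally repelling (the mechanism behind (\ref{disorder-part})), no orbit converges to it, and the only way $|v(n)|$ can fail to grow is by repeated grazing of the fold locus $\Sigma=\{f(x_1)=f(x_2)\}$, which an $f$-generic $u$-orbit cannot do with the required frequency.

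The heart of the argument is to show that $T$ (equivalently $\hat T$) admits a unique absolutely continuous invariant probability measure $\mu$, with bounded density, mixing, and with Lebesgue-almost every point of $[0,1]^2$ in its basin, such that $\mathrm{supp}\,\mu$ meets $D_{\mathrm{syn}}$ yet also carries positive mass outside some fixed neighborhood of $D_{\mathrm{syn}}$. For existence and the underlying Lasota--Yorke/bounded-distortion estimates I would invoke the theory of multidimensional piecewise-expanding maps (or the Keller--Liverani framework for weakly coupled maps): $T$ is piecewise $\mathcal{C}^2$ on the four cells cut out by $\{x_1=t^*\}$ and $\{x_2=t^*\}$, it expands in both the $u$- and the $v$-direction (rates $\ge 2-s_0-\eta>1$ and $\ge\lambda>1$), and the $\mathcal{C}^2$ hypothesis on $g$ supplies exactly the distortion control needed to close the estimate; uniqueness, mixing and the full-measure basin come from connectedness of the trapping region and topological mixing of $T$ on it, as for the standard tent map of Theorem \ref{main-theorem} and its piecewise-linear extension (Remark \ref{piecewise-linear}), stable here under the small $(c,s_0,g)$-perturbation by spectral stability of the transfer operator. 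For the location of $\mathrm{supp}\,\mu$: at $c=0$ it is $I_*\times I_*$, where $I_*$ is the interval carrying the one-dimensional a.c.i.m. of $f$ (and $I_*\to[0,1]$ as $s_0\to0$), which is genuinely two-dimensional and meets the diagonal; for small $c$ the fold locus $\Sigma$ still passes through $\mathrm{supp}\,\mu$, and since $T(\Sigma)\subset D_{\mathrm{syn}}$ while $\Sigma\setminus D_{\mathrm{syn}}$ avoids the singular set of $T$, the closed forward-invariant set $\mathrm{supp}\,\mu$ must still contain points of $D_{\mathrm{syn}}$; two-dimensionality then gives $\mu$ positive mass outside any fixed thin neighborhood of $D_{\mathrm{syn}}$.

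Granting this, the conclusion is immediate: fixing any $\delta>0$ and letting $N_\delta$ be the $\delta$-neighborhood of $D_{\mathrm{syn}}$, one has $\mu(N_\delta)>0$ because $\mathrm{supp}\,\mu$ meets $D_{\mathrm{syn}}$, and $\mu([0,1]^2\setminus N_{\gamma_0})>0$ for a suitable fixed $\gamma_0>0$; by ergodicity of $\mu$ and the full-measure basin, for Lebesgue-almost every $(x_1(0),x_2(0))$ the orbit visits both $N_\delta$ and the complement of $N_{\gamma_0}$ infinitely often, and letting $\delta\downarrow 0$ along a sequence gives (\ref{order-part}) while the returns to the complement of $N_{\gamma_0}$ give (\ref{disorder-part}). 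The main obstacle is the construction step: checking that the merely-$\mathcal{C}^2$ perturbation $g$ and the non-Markov turning point do not break the Lasota--Yorke machinery, and isolating a single a.c.i.m. with Lebesgue-full basin whose support still reaches the diagonal, uniformly over $0\le c<c_+$, $s_0\le s_{0+}$, $\|g\|_{\mathcal{C}^2}<\eta$; the delicate point inside it is the bookkeeping of the transverse expansion near $\Sigma$, where $\hat T$ is only Lipschitz and the $u$- and $v$-directions mix, so that one must control the frequency with which a typical orbit meets a neighborhood of $\Sigma$ in order to guarantee that the positive transverse Lyapunov exponent survives the folding losses --- which is exactly what makes the constant $\gamma_0$ in (\ref{disorder-part}) uniform rather than orbit-dependent.
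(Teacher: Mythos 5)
Your strategy (pass to an ergodic absolutely continuous invariant measure and read off (\ref{order-part})--(\ref{disorder-part}) from Birkhoff's theorem) is genuinely different from the paper's, but as written it has real gaps, and the gaps sit exactly where the difficulty of Theorem \ref{nonlinear} lies. First, the black box you lean on --- a unique a.c.i.m.\ $\mu$ for the two-dimensional piecewise expanding map $T$, ergodic, mixing, with Lebesgue-full basin, uniformly over $0\le c<c_+$, $s_0\le s_{0+}$, $\|g\|_{\mathcal{C}^2}<\eta$ --- is not something you can simply cite and perturb. The multidimensional Lasota--Yorke machinery must beat the local complexity of the singularity set (four domains of smoothness meet near $(1/2,1/2)$, and images of the fold lines return near the folds), which is precisely the ``components versus expansion'' bookkeeping that the paper formalizes in Lemma \ref{iteration-L} and Corollary \ref{Corollary}; and the paper's Remark 1.5 explicitly notes that the transfer-operator route of Keller works for tent maps but is difficult to push to general piecewise-expanding $f$. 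Even granting existence and uniqueness of an SRB/a.c.i.m.\ from the Keller--Liverani literature, ergodicity, mixing and the statement that Lebesgue-almost every point of $[0,1]^2$ (not just $\mu$-a.e.\ point) is generic for $\mu$ are additional unproven steps, not consequences of ``connectedness plus spectral stability.''

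Second, and more seriously, your route to (\ref{order-part}) hinges on the claim that $\mathrm{supp}\,\mu$ meets $D_{\mathrm{syn}}$ for all small $c$. The observation $T(\Sigma)\subset D_{\mathrm{syn}}$ is correct, but it only helps once you know $\mathrm{supp}\,\mu$ intersects the off-diagonal branch of $\Sigma$; your justification is a perturbation from the product picture at $c=0$, and supports of invariant measures are not lower semicontinuous in the parameter, so this does not follow without a quantitative lower bound on the invariant density on a fixed ball crossing $\Sigma$ away from the diagonal --- which would again require the full strength of the spectral-stability machinery you have not established. In other words, the assertion ``$\mathrm{supp}\,\mu$ reaches the diagonal'' \emph{is} the ordered part, and the paper proves it by hand: Lemma \ref{flat} shows that images of short segments remain nearly flat while the Iteration Lemma grows a definite fraction of them to length $\delta_1$, Lemma \ref{nonlinear-after-delta1} runs a case analysis on intersections with $x_1=1/2$, $x_2=1/2$ showing such a nearly flat curve must eventually cross $x_1=x_2$ with a fixed proportion landing in $G_{\epsilon}$, and Fubini converts this into a statement about Lebesgue-a.e.\ initial point; the disordered part is then handled locally (Lemma \ref{5.4} plus a component-counting corollary), with no invariant measure ever constructed. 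Your transverse-expansion computation and the uniformity discussion for $\gamma_0$ are fine, but to make your proposal a proof you would have to supply the a.c.i.m.\ construction with full basin and the support-touches-diagonal argument, i.e.\ essentially redo the hard part by other (heavier) means.
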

Theorem \ref{main-theorem} can also be extended to the  multi-node case. \vskip -0.3cm
\vskip -0.3cm\begin{Theorem}\label{multi-node-linear}
Let $f$ be the standard tent map and consider the coupled tent map lattices (\ref{multi-node}) with $A$ being $m\times m$ symmetric matrix satisfying $A\mathbf{e}=0$. There exists $c_+>0$ such that if the coupling coefficient $0\leq c<c_+$, then there exists a constant $\gamma_0>0$, such that for almost every initial point $(x_1(0),\cdots,x_m(0))\in [0,1]^m$, (\ref{order-part}) and (\ref{disorder-part}) hold true.
\end{Theorem}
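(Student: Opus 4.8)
The plan is to reduce the multi-node problem to a collection of essentially two-node problems by exploiting the spectral structure of the symmetric matrix $A$. Since $A\mathbf{e}=0$ and $A$ is symmetric, we can orthogonally diagonalize $A$, writing $\RR^m = D_{\mathrm{syn}} \oplus V$, where $V = \mathbf{e}^\perp$ is $A$-invariant and carries the eigenvalues $\mu_1,\dots,\mu_{m-1}$ of $A$ restricted to $V$. The map $T = (I+cA)\mathbf{f}$ acts on the component along $\mathbf{e}$ as the ordinary tent map (so the synchronized dynamics is just a single chaotic map), while the transverse behavior is governed, to first order, by multiplication by the factors $(1+c\mu_j)$ composed with the (piecewise constant, $\pm s$) derivative of $\mathbf{f}$. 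The key point carried over from Theorem \ref{main-theorem} is that for $c$ small, $|1+c\mu_j|$ is close to $1$ and in particular strictly less than the expansion rate $s=2$ of the tent map, so the transverse directions are ``weakly contracted relative to the ambient expansion'' in exactly the regime where pseudo-synchronization was established for $m=2$.

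First I would set up coordinates: fix an orthonormal basis $\{\mathbf{v}_1,\dots,\mathbf{v}_{m-1}\}$ of $V$ consisting of eigenvectors of $A$, and write $\bx(n) = \bar{x}(n)\mathbf{e}/\sqrt{m} + \sum_j y_j(n)\mathbf{v}_j$. Then $\mathrm{dist}(\bx(n),D_{\mathrm{syn}})^2 = \sum_j y_j(n)^2$, so (\ref{order-part}) and (\ref{disorder-part}) are equivalent to $\liminf \sum_j y_j(n)^2 = 0$ and $\limsup \sum_j y_j(n)^2 \ge \gamma_0^2$. Because the tent map is piecewise linear, on each branch $\mathbf{f}$ acts as an affine map whose linear part is $\pm 2 I$ (a scalar), so $(I+cA)\mathbf{f}$ on each piece has linear part $\pm 2(I+cA)$, which preserves the splitting $D_{\mathrm{syn}}\oplus V$ and acts on $V$ by the diagonal matrix $\mathrm{diag}(\pm 2(1+c\mu_j))$. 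Thus the itinerary-dependent dynamics decouples across the eigendirections, with the $j$-th transverse coordinate obeying a one-dimensional skew-product over the synchronized tent-map dynamics with multipliers $\pm 2(1+c\mu_j)$ — precisely the structure analyzed in the $m=2$ case (there the single transverse multiplier was $\pm 2(1-2c)$, i.e. $\mu=-2$).

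Next I would redo the quantitative estimates of Theorem \ref{main-theorem} in this slightly more general skew-product. The mechanism for $\liminf = 0$ is that the synchronized base map is ergodic with respect to Lebesgue measure and its orbit returns near the kink $x=1/2$ (or wherever the contraction/expansion balance favors collapse of the transverse fibers) infinitely often, forcing $\sum_j y_j(n)^2$ to become arbitrarily small along a subsequence; since each $|1+c\mu_j|$ is within $O(c)$ of $1$ and $c$ is small uniformly in $j$ (the $\mu_j$ are bounded once $A$ is fixed), the same Borel–Cantelli / distortion argument applies verbatim, component by component. The mechanism for $\limsup \ge \gamma_0$ is the reverse: generically the orbit also spends long stretches in the expanding regime, and one shows that the transverse coordinate cannot stay trapped near $0$ — a measure-theoretic statement that the set of initial conditions whose transverse part converges to $0$ has Lebesgue measure zero. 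I would run this argument on, say, the eigendirection $\mathbf{v}_1$ with the multiplier of largest modulus; driving a single transverse coordinate away from $0$ already forces $\mathrm{dist}(\bx(n),D_{\mathrm{syn}})$ away from $0$.

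The main obstacle I anticipate is uniformity and the handling of possible resonances or sign-coherence among the different eigendirections. In the two-node case there is a single transverse coordinate and the combinatorics of the tent-map itinerary (which branch, hence which sign $\pm$) is directly tied to that one coordinate's growth; with $m-1$ transverse coordinates all driven by the \emph{same} symbolic itinerary but with \emph{different} multipliers $|1+c\mu_j|$, one must ensure that the ``collapse'' events for $\liminf$ happen simultaneously for all $j$ (they do, since a collapse comes from the base orbit approaching the kink, which contracts all fibers at once) and, more delicately, that for the $\limsup$ statement the exceptional null set does not grow uncountably or fail to be null when intersected over directions. Concretely, the hard part is a Fubini-type argument: show that for a.e. base point and a.e. transverse initial condition, at least one $y_j(n)$ fails to converge to $0$, which amounts to proving that the ``stable set'' of $D_{\mathrm{syn}}$ — points eventually attracted to the diagonal — is Lebesgue-null, uniformly over the finitely many multipliers. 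Since $A$ is fixed (so there are only finitely many $\mu_j$, all bounded), I expect this to follow by choosing $c_+$ small enough that $\max_j |1+c\mu_j| \cdot 2 > 1$ with room to spare and then quoting the $m=2$ estimates with $\mu = \mu_1$; the write-up reduces to checking that every constant in the proof of Theorem \ref{main-theorem} depends on $c$ and the transverse multiplier only through bounds that survive replacing $-2$ by an arbitrary fixed $\mu_j$ and shrinking $c_+$ accordingly.
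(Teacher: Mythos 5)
There is a genuine gap, and it sits at the very center of your reduction. Your decoupling claim — that on each branch $\mathbf{f}$ has linear part $\pm 2I$, so the Jacobian of $T$ is $\pm 2(I+cA)$, preserves the splitting $\mathrm{span}(\mathbf{e})\oplus\mathbf{e}^{\perp}$, and acts diagonally on the eigenvectors $\mathbf{v}_j$ — is only true on the two hypercubes where \emph{all} coordinates lie on the same side of $1/2$. On the remaining $2^m-2$ hypercubes the derivative is $2(I+cA)\,\mathrm{diag}(\epsilon_1,\dots,\epsilon_m)$ with mixed signs $\epsilon_i=\pm1$; this matrix neither maps $\mathbf{e}$ into $\mathrm{span}(\mathbf{e})$ nor commutes with $A$, so the splitting and the eigendirections are not preserved and there is no family of one-dimensional skew-products. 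Worse, the ``base'' is not even a factor: since $\mathbf{e}^{\top}(I+cA)=\mathbf{e}^{\top}$, the mean evolves by $\bar{x}(n+1)=\frac1m\sum_i f(x_i(n))$, which depends on all coordinates through the branch choices, not on $\bar{x}(n)$ alone, so ``the synchronized tent-map dynamics'' driving your fibers is not well defined globally. These mixed-branch regions cannot be dismissed as exceptional, because the folding that occurs there is precisely the mechanism by which orbits approach $D_{\mathrm{syn}}$ (away from the kink hyperplanes the diagonal is transversally \emph{repelling} for small $c$, since $2|1+c\mu_j|>1$); your description of collapse as coming from the base orbit ``approaching the kink, which contracts all fibers at once'' is therefore not a mechanism the two-node proof provides, and indeed that proof contains no Borel--Cantelli/fiberwise argument to quote: it is a planar geometric argument about segments of slope $\pm1$ growing until they must cross $x_1=x_2$, which has no direct fiberwise analogue for a single eigendirection $\mathbf{v}_j$.

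For comparison, the paper does not attempt any eigen-decomposition. For the ordered part it iterates \emph{convex} sets: the Iteration Lemma grows the measure of components (the image of a convex set meeting at most $2^m-1$ hypercubes has a component gaining a factor close to $2^m/(2^m-1)$), and a convexity ``center-point'' lemma shows that as soon as an image meets all $2^m$ hypercubes it contains $(1/2,\dots,1/2)$ and a definite proportion (of order $\epsilon^m$) of it lies in $G_{\epsilon}$. For the disordered part it avoids diagonalization entirely by writing $\mathrm{dist}(\bx,D_{\mathrm{syn}})^2=\bx^{\top}(I-m^{-1}E_0)\bx$ with $E_0=\mathbf{e}\mathbf{e}^{\top}$ and using $AE_0=E_0A=0$ to get $\mathrm{dist}(T(\bx),D_{\mathrm{syn}})\ge 2(1-\widehat{F}(A,c))\,\mathrm{dist}(\bx,D_{\mathrm{syn}})$ outside the mixed region, plus a preimage-measure estimate and again the Iteration Lemma. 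If you want to salvage your approach, you would have to supply a genuinely new argument on the mixed-sign hypercubes, where neither the splitting nor the base factor survives — which is essentially the new content the paper's convexity and quadratic-form arguments are designed to provide.
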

\begin{Remark}\label{nonpiecewise-linear}
Remark \ref{piecewise-linear} is also applicable for Theorem \ref{nonlinear} and \ref{multi-node-linear}.
\end{Remark}
 Theorem \ref{main-theorem} is a special case of Theorem \ref{nonlinear} or \ref{multi-node-linear} without regard to the fact we can obtain a larger $c_+$ in Theorem \ref{main-theorem} than in other two theorems. However, we will still give the proof of Theorem \ref{main-theorem} first, since it is helpful for readers to understand the key idea of the proof as well as more complicated cases considered in Theorem \ref{nonlinear}.
 \begin{Remark} Theorem \ref{multi-node-linear} can also be obtained from Propositions 5 and 7 in \cite{Keller1}. We present the proof of it here since potentially we can combine the method in the proofs of Theorem \ref{nonlinear} and  \ref{multi-node-linear} to prove the intermittent behaviors for coupled piecewise-expanding map lattices with any multi-nodes. In contract,  the propositions 5 and 7 in \cite{Keller1} seem to work for tent maps only and is difficult to be applied to general piecewise-expanding maps.
 \end{Remark}
\vskip 0.3cm
The remaining  part of this paper is organized as follows. In Section 2, we give a key iteration lemma as the base for the main proof. In Section 3 and 4, we prove (\ref{order-part}) (the ordered part) and (\ref{disorder-part}) (the disordered part) of Theorem \ref{main-theorem}. The proof of Theorem \ref{nonlinear} is given in Section 5. In the last section, we will prove Theorem \ref{multi-node-linear}.
\section{The basic idea and the key lemma}
In this section, we will describe our intuition for the proof. From the observation, we then provide a key iteration lemma, which is the base for the main proof.

 Roughly speaking, for any set $S$ with a small measure in some sense, from the local expansion of the map $T$, we observe that  the measure of $T^j(S)$ will become large enough for some large $j$ such that
 $T^j(S)\cap D_{\mathrm{syn}}\not=\emptyset$. If $T^i(S)$ also satisfies some `good' property for $i=0, 1,\cdots, j$ (say, $T^i(S)$ is a segment or convex region), then we can show that  for any neighborhood of the diagonal $D_{\mathrm{syn}}$ there is a constant $m_0>0$ such that there exists a subset $S_0$ of $S$ satisfying that {\rm (i)} for each point $\bp \in S_0$, $T^{j}(\bp )$ is in the neighborhood of $D_{\mathrm{syn}}$; {\rm (ii)} $\bm(S_0)\ge m_0  \bm(S)$, where $\bm(\cdot)$ denotes the Lebesgue measure or a domain of the length of a simple curve. Then {\rm (i)} and {\rm (ii)} will imply  (\ref{order-part}) holds true for a set of full measure.


 However, since $T$ is not one to one and thus is not globally expanding, the actual picture is much more complicated than the one described above.
In fact, since $T$ is not one-to-one, usually the `good' property of a set $S$ is not preserved by its image $T(S)$. Without this property, it is impossible to obtain {\rm (ii)}.
On the other hand, let $[0, 1]^m=\cup D_J$, where $D_J$ are $2^m$ small hypercubes of $[0, 1]^m$ divided by the planes $x_i=1/2,\ 1\le i\le m$.  From the definition of $f$, one can see that $T: D_J\rightarrow [0, 1]^m$ is one to one for each $J$. Moreover,
$T(D_J\cap S)$ will keep the `good' property of
$S$. For this reason, we have to divide $S$ into $S\cap D_J$, and consider the iterations of $T$ on each of them individually.

For convenience, we say $S$ has $i_1$ components if there are $i_1$ nonempty sets among all $S\cap D_J$. Furthermore, consider a set $D\subset [0, 1]^m$. For any set $S$ with components $S_{1,1}, \cdots, S_{1, i_1}$, if there are exactly $\hat{i}_1$ components $S_{1, 1_j},  \ j=1,\cdots,\hat{i}_1\leq i_1$ among them such that $S_{1, 1_j}\cap D\not=\emptyset$ for each $1\le j\le \hat{i}_1$, we say $S$ has $\hat{i}_1$ components in $D$. For the set $S$ stated above, suppose for each $1\le j\le i_1$, $T(S_{1,j})$ has $k_1(j)$ components, we say $T(S)$ has $\sum_{j=1}^{i_1} k_1(j)$ components. Similarly, suppose for each $1\le j\le \hat{i}_1$, $T(S_{1,1_j})$ has $\hat{k}_1(j)$ components in $D$, we say $T(S)$ has $\sum_{j=1}^{\hat{i}_1}\hat{k}_1(j)$ components in $D$. Inductively, suppose $T^{l}(S)$ has components $S_{l,1},\cdots,S_{l,i_l}$ and has components  $S_{l,l_j}, \ j=1,\cdots, \hat{i}_l\leq i_{l}$ in $D$. Furthermore, for each $1\leq j\leq i_l$, $T(S_{l,j})$ has $k_l(j)$ components, and for $1\leq j\leq \hat{i}_l$, $T(S_{l,l_j})$ has $\hat{k}_l(j)$ components in $D$. We say that $T^{l+1}(S)$ has $\sum_{j=1}^{i_{l}}k_l(j)$ components and $\sum_{j=1}^{\hat{i}_{l}}\hat{k}_l(j)$ components in $D$, respectively. In the same way, we can give the definitions for $T^{-l}(S)$ (in $D$). Note that each component $\Omega$ of $T^{l}(S)$ corresponds a subset $\Omega_0\subset S$ such that $T^{l}:\ \Omega_0\rightarrow \Omega$ is a homeomorphism.

Obviously, the measure of each component of $S$ is usually strictly less than the measure of $S$. Moreover, images of each component may have more than one component. Thus we need to show that averagely the local expansion of the map will surpass the dividing action by $x_i=1/2$, $i=1, \cdots, m$ on a set so that the measures of the components will keep increasing as the iteration goes forward only if the intersection of the corresponding set and $D_{\mathrm{syn}}$ is empty. More detailed, we have the key iteration lemma and its corollary as below.
\vskip 0.2cm

Consider the coupled map lattice $T$ in \eqref{multi-node} with $f(x)$ differential for $x\not=1/2$. Let $S\subset  [0,1]^m$ be a simple curve or measurable set. We define
 $$E_{+}(c)=\begin{cases}
\sup_{\bp \in  [0,1]^m} | \det(JT(\bp ,c)) | = | \det(A(c)) | \nu_+^m  &  {\rm\ with \ respect\ to\  a\ measurable\ set}\ S\\
\sup_{\bp \in  [0,1]^m}\|JT(\bp ,c)\|  & {\rm\ with \ respect\ to\  a\ simple\ curve}\ S\end{cases},$$ where $A(c)\equiv I+cA$ is the coupling matrix, $JT(\bp,\cdot )$ be the Jacobian matrix of $T$ at $\bp$ (if it exists) and $\nu_+=\sup_{x\not=1/2} | f'(x) | $. Similarly, we define
$$E_{-}(c)=\begin{cases}
\inf_{\bp \in  [0,1]^m} | \det(JT(\bp ,c)) | = | \det(A(c)) |  \nu_-^m& {\rm\ with \ respect\ to\  a\ measurable\ set}\ S\\
\inf_{\bp \in  [0,1]^m}\|JT(\bp ,c)\|&\!\! {\rm\ with \ respect\ to\  a\ simple\ curve}\ S\end{cases},$$ where $\nu_-=\inf_{x\not=1/2} | f'(x) | $.
Obviously, for any simple curve or measurable set $S$ in some small cube $D_J$ of the phase space, we have
 $E_-(c)  \bm(S)\le \bm(T(S))\le E_+(c)  \bm(S)$.

 For example, if $f$ is the tent map and $m=2$, then $E_{\pm}(c)$ equals $4(1-2c)$ for a measurable set and $E_+(c)=2$, $E_-(c)=2(1-2c)$ for a curve.

For any real number $x$,  we define $\lfloor x\rfloor=\max\{i \ \mathrm{ is \ an  \ integer}  | i\le x\}$. Let $D$ be a domain in the phase space $[0,1]^m$.
\begin{Definition}
Let $\delta>0$, $m_0,\ a\in \NN$. We say a simple curve or measurable set $S$ in $D$ is $(\delta, m_0, a)$-good if $S$ lies in some small cube of $[0,1]^m$ with $\bm(S)\le \delta \bm(D)$ and for each $i\ge 0$ and each component $S_1$ of $T^i(S)$ with $\bm(S_1)\le \delta \bm(D)$, it holds that $T^{m_0}(S_1)$ has at most $a$ components.
\end{Definition}

\begin{Lemma}\label{iteration-L} {\rm (Iteration Lemma)}
Assume $E_+(c)\ge E_-(c)>1$. Let $D$ be a domain in the phase space $[0, 1]^m$.
Suppose $0<\delta_1<1$ and $a<E_-^{m_0}(c)$ with $a, m_0\in \NN$.

 Let  $1<\mu<\left(1-\log_{E_+(c)}\frac{E_-(c)}{a^{1/m_0}}\right)^{-1}$, $d=1-\left(1-\log_{E_+(c)}\frac{E_-(c)}{a^{1/m_0}}\right)\mu>0$ and
$F(c)=a (\frac{E_-(c)}{a^{1/m_0}})^{1-\log_{E_+(c)}\delta_1}$. Define $N_0=\lfloor\log_{\mu}(d^{-1}\log_2F(c))\rfloor$.
 Suppose $N> N_0$ and for any $(\delta_1, m_0, a)$-good curve or measurable set $\Omega\subset D$ with
$\bm(\Omega)\in [2^{-\mu^{N+1}}\bm(D), 2^{-\mu^N}\bm(D)]$,
define $k(N)=\left\lfloor-\log_{E_+(c)} \frac{\bm(\Omega)}{\delta_1 \bm(D)}\right\rfloor$.
Then there exist some disjoint subcurves or measurable subsets $\Omega_{j}\subset \Omega,\ j=1, \cdots $ such that {\rm(i)} for each $j\ge 1$ it holds that $T^{k(N)}(\Omega_j)$ is a component of $T^{k(N)}(\Omega)$ with $\bm(T^{k(N)}(\Omega_j))\ge 2^{-\mu^N}\bm(D)$; {\rm(ii)} $\bm(\cup_{j\ge 1}\Omega_j)\ge (1-F(c)  2^{-d \mu^{N}}) \bm(\Omega)$.
\end{Lemma}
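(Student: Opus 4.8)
\
The plan is to iterate $T$ on $\Omega$ exactly $k(N)$ times and then sort the components of $T^{k(N)}(\Omega)$ by size. Because $k(N)=\big\lfloor-\log_{E_+(c)}\!\big(\bm(\Omega)/(\delta_1\bm(D))\big)\big\rfloor$, we have $E_+(c)^{k(N)}\bm(\Omega)\le\delta_1\bm(D)$; and since each component of $T^i(\Omega)$ lies in a single cube $D_J$, on which $T$ is one-to-one, its measure grows by a factor at most $E_+(c)$ per step, so every component of $T^i(\Omega)$ with $0\le i\le k(N)$ still has measure $\le\delta_1\bm(D)$. Call a component $C$ of $T^{k(N)}(\Omega)$ \emph{large} if $\bm(C)\ge 2^{-\mu^N}\bm(D)$ and \emph{small} otherwise, and let the $\Omega_j$ be the subsets of $\Omega$ carried homeomorphically by $T^{k(N)}$ onto the large components; then (i) holds by construction. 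Since the preimages of the components of $T^{k(N)}(\Omega)$ partition $\Omega$ (up to a null set), inequality (ii) is equivalent to bounding the total preimage of the small components by $F(c)\,2^{-d\mu^N}\bm(\Omega)$.

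Two ingredients feed this estimate. First, a bound on the number of components of $T^{k(N)}(\Omega)$: because $T$ is one-to-one on each cube, the number of components is nondecreasing along the orbit; and because $\Omega$ is $(\delta_1,m_0,a)$-good while, as just observed, every component arising before time $k(N)$ has measure $\le\delta_1\bm(D)$, the number of components is multiplied by at most $a$ over any block of $m_0$ consecutive steps. Splitting $[0,k(N)]$ into at most $\lceil k(N)/m_0\rceil$ such blocks gives at most $a^{k(N)/m_0+1}$ components, hence at most that many small ones. Second, for each small component $C$ the expansion lower bound yields $\bm\big((T^{k(N)})^{-1}C\big)\le E_-(c)^{-k(N)}\bm(C)<E_-(c)^{-k(N)}2^{-\mu^N}\bm(D)$. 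Multiplying these, $\bm\big(\bigcup_j\Omega_j\big)\ge\bm(\Omega)-a\big(a^{1/m_0}/E_-(c)\big)^{k(N)}2^{-\mu^N}\bm(D)$.

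It then remains to simplify the error term. Write $\bm(D)=2^{t}\bm(\Omega)$, so $t=\log_2\!\big(\bm(D)/\bm(\Omega)\big)\in[\mu^N,\mu^{N+1}]$ and $k(N)\ge\log_{E_+(c)}\delta_1+t\log_{E_+(c)}2-1$. Substituting this lower bound for $k(N)$ — legitimate since $a^{1/m_0}/E_-(c)<1$ by the hypothesis $a<E_-^{m_0}(c)$ — and using $E_-(c)/a^{1/m_0}=E_+(c)^{\beta}$ with $\beta:=\log_{E_+(c)}\!\big(E_-(c)/a^{1/m_0}\big)\in(0,1]$, the error term collapses to $F(c)\,2^{t(1-\beta)-\mu^N}\bm(\Omega)$; bounding $t\le\mu^{N+1}=\mu\mu^N$ and recalling $d=1-(1-\beta)\mu$ then turns the exponent into $-d\mu^N$, which is exactly (ii). The hypothesis $N>N_0=\lfloor\log_\mu(d^{-1}\log_2F(c))\rfloor$ is precisely what makes $F(c)\,2^{-d\mu^N}<1$, so the bound is non-vacuous and at least one large component must exist.

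I expect the main difficulty to be organizational rather than conceptual: arranging the computation so that the constants $F(c)$, $d$, and $N_0$ come out exactly as stated. The crucial point is to keep $t=\log_2(\bm(D)/\bm(\Omega))$ symbolic and substitute $t\le\mu^{N+1}$ only at the very end — replacing $\bm(D)$ by its largest value $2^{\mu^{N+1}}\bm(\Omega)$ any earlier worsens the exponent by $\beta(\mu-1)\mu^N$ and destroys the estimate. The other delicate points are the combinatorial claim that the component count is nondecreasing under $T$ and multiplied by at most $a$ per $m_0$ steps, and the check that every component occurring before time $k(N)$ indeed has measure $\le\delta_1\bm(D)$ so that $(\delta_1,m_0,a)$-goodness is applicable. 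The measurable-set and simple-curve cases run in parallel throughout, using only the stated inequality $E_-(c)\bm(S)\le\bm(T(S))\le E_+(c)\bm(S)$ for $S$ lying in a single cube.
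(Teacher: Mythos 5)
Your proposal is correct and follows essentially the same route as the paper's proof: iterate $k(N)$ times, use $(\delta_1,m_0,a)$-goodness (after checking all intermediate components stay below $\delta_1\bm(D)$) to bound the number of components by $a^{k(N)/m_0+1}$, bound the preimage of each small component by $E_-(c)^{-k(N)}2^{-\mu^N}\bm(D)$, and then the identical algebraic simplification — your symbolic $t=\log_2(\bm(D)/\bm(\Omega))$ is just the paper's factor $(\bm(\Omega)/\bm(D))^{\beta-1}$ in different notation, with $t\le\mu^{N+1}$ applied at the end to produce the exponent $-d\mu^N$. No gaps worth noting.
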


\begin{proof}
  We iterate the map on $\Omega$ for $k(N)$ times. Then there are at most $a^{\left\lfloor\frac{k(N)}{m_0}\right\rfloor+1}$ disjoint set $\widehat{\Omega}_{j}\subset \Omega$ such that $T^{{k(N)}}(\widehat{\Omega}_{j})$ is a component of $T^{{k(N)}}({\Omega})$ for each $j$ (note that the assumption that $\bm(T^{i}(\widehat{\Omega}_j))\le \delta_1  \bm(D)$ is valid for each $\widehat{\Omega}_{j}$
  and each $i\le k(N)$).

  Then the total measure of all $\widehat{\Omega}_j$'s satisfying $\bm(T^{k(N)}(\widehat{\Omega}_j))\le 2^{-\mu^N}\bm(D)$ is less than
   \begin{align*}
&2^{-\mu^N}\bm(D)  a^{\left\lfloor\frac{k(N)}{m_0}\right\rfloor+1}  E_-(c)^{-k(N)} \qquad \quad{\rm (since}\ \bm(\widehat{\Omega}_j)\le E_-(c)^{-k(N)}  \bm(T^{k(N)}(\widehat{\Omega}_j)))\\
\le & a  2^{-\mu^N}\bm(D)  \left(\frac{E_-(c)}{a^{1/m_0}}\right)^{-k(N)}
\\
\le & a  2^{-\mu^N}\bm(D)  \left(\frac{E_-(c)}{a^{1/m_0}}\right)^{\left\lfloor\log_{E_+(c)} \frac{\bm(\Omega)}{\delta_1  \bm(D)}\right\rfloor+1}\\
\le & a  2^{-\mu^N}\bm(D)  \left(\frac{E_-(c)}{a^{1/m_0}}\right)^{1-\log_{E_+(c)} \delta_1}\left(\frac{E_-(c)}{a^{1/m_0}}\right)^{-\log_{E_+(c)} \bm(D)}\left(\frac{E_-(c)}{a^{1/m_0}}\right)^{\log_{E_+(c)} \bm(\Omega)}\\
= & F(c) 2^{-\mu^N}\bm(D)^{1-\log_{E_+(c)} \left(\frac{E_-(c)}{a^{1/m_0}}\right)}\bm(\Omega)^{\log_{E_+(c)} \left(\frac{E_-(c)}{a^{1/m_0}}\right)}\qquad {\rm (since}\  a^{\log_b c}=c^{\log_b a}).
\end{align*}
Hence it possesses a portion of $\Omega$ less than
\begin{align*}
&F(c)  2^{-\mu^N} \left (\frac{\bm(\Omega)}{\bm(D)}\right)^{\log_{E_+(c)} \left(\frac{E_-(c)}{a^{1/m_0}}\right)-1}\\
\le & F(c)  2^{-\mu^N}  \left (2^{-\mu^{N+1}}\right)^{\log_{E_+(c)} \left(\frac{E_-(c)}{a^{1/m_0}}\right)-1}
\le  F(c)  2^{-d \mu^N}.
\end{align*}
Choose $\Omega_1, \Omega_2
\cdots,$ be all $\widehat{\Omega}_j$ with a measure larger than $2^{-\mu^N}\bm(D)$ and the proof is completed.\hfill\qedbox
\end{proof}

\vskip 0.3cm
\begin{Corollary}\label{Corollary}
Let the domain $D$, $m_0, a, \mu, N_0$ and $d$ be as in Lemma \ref{iteration-L}. Then there exists a constant $c_1>0$ such that for any $(\delta_1, m_0, a)$-good curve or measurable set
 $\Omega\subset D$ with $\bm(\Omega)\le \delta_1 \bm(D)$, there exist disjoint subcurves or measurable subsets $\Omega_i\subset \Omega$, $i=1, 2,\cdots,$ such that {\rm(i)} for any $i$, there exists $k(i)$ such that $T^{k(i)}(\Omega_i)$ is a component of $T^{k(i)}(\Omega)$ and $\bm(T^{k(i)}(\Omega_i))\ge 2^{-\mu^{N_0}}  \bm(D)$; {\rm(ii)} $\bm(\cup_i \Omega_i)\ge c_1  \bm(\Omega).$
\end{Corollary}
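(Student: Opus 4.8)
The plan is to derive the Corollary by iterating Lemma~\ref{iteration-L} along the decreasing scale of ``bands'' $[2^{-\mu^{n+1}}\bm(D),2^{-\mu^{n}}\bm(D)]$. If $\bm(\Omega)\ge 2^{-\mu^{N_0}}\bm(D)$ there is nothing to prove: take $\Omega_1=\Omega$, $k(1)=0$. Otherwise $\Omega$ lies in a band of some index $N\ge N_0$; if $N=N_0$ we finish with the growth step below, so assume $N\ge N_0+1$. One application of Lemma~\ref{iteration-L} produces disjoint $\Omega_j\subset\Omega$ covering at least the fraction $1-F(c)2^{-d\mu^{N}}$ of $\Omega$, whose images $T^{k(N)}(\Omega_j)$ are components of $T^{k(N)}(\Omega)$ with $2^{-\mu^{N}}\bm(D)\le \bm\big(T^{k(N)}(\Omega_j)\big)\le\delta_1\bm(D)$ --- the upper bound being the one shown inside the proof of Lemma~\ref{iteration-L}. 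Each such image, having measure $\ge 2^{-\mu^{N}}\bm(D)$, is either already of measure $\ge 2^{-\mu^{N_0}}\bm(D)$, in which case $\Omega_j$ is set aside as a final piece, or it lies in a band of index at most $N-1$.

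The heart of the argument is a recursion that peels off one band at a time. Every component of $T^{k(N)}(\Omega)$ lies in a single small cube, has measure $\le\delta_1\bm(D)$, and each of its forward components is again a forward component of $\Omega$; hence it is itself $(\delta_1,m_0,a)$-good and Lemma~\ref{iteration-L} applies to it. We now iterate: at each round, apply Lemma~\ref{iteration-L} only to those surviving pieces whose current component sits in the \emph{largest} band index still present, and pull the resulting subsets back into $\Omega$; pieces whose component has already reached measure $\ge 2^{-\mu^{N_0}}\bm(D)$ are removed as final. Since the components produced at band $n$ have measure $\ge 2^{-\mu^{n}}\bm(D)$, they lie in bands of index $\le n-1$ (or are final), so after each round the largest band index present drops by at least one. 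Thus only finitely many rounds occur, at most one for each $n\in\{N_0+1,\dots,N\}$, and at the round handling band $n$ every piece processed loses at most the fraction $F(c)2^{-d\mu^{n}}$ of itself while the others lose nothing; the total surviving measure is therefore at least $\prod_{n=N_0+1}^{N}\big(1-F(c)2^{-d\mu^{n}}\big)\bm(\Omega)$. Finally, every remaining component $C$ has $2^{-\mu^{N_0+1}}\bm(D)\le\bm(C)\le\delta_1\bm(D)$, so by the good property one of its $\le a$ components after $m_0$ iterates is at least $E_-(c)^{m_0}/a>1$ times larger; iterating this a fixed number $L$ of times raises $\bm(C)$ above $2^{-\mu^{N_0}}\bm(D)$ while retaining at each block at least the fraction $\rho:=E_-(c)^{m_0}/\big(a\,E_+(c)^{m_0}\big)$. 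Pulling all the resulting pieces back to $\Omega$ yields the $\Omega_i$ and powers $k(i)$ satisfying (i), and (ii) holds with
\[
c_1=\rho^{\,L}\prod_{n=N_0+1}^{\infty}\big(1-F(c)\,2^{-d\mu^{n}}\big)>0,
\]
the infinite product converging to a positive number because the terms $F(c)2^{-d\mu^{n}}$ decay super-exponentially and, by the choice $N_0=\lfloor\log_\mu(d^{-1}\log_2 F(c))\rfloor$, are $<1$ for every $n\ge N_0+1$.

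The step I expect to be the real obstacle is transferring the measure estimates of Lemma~\ref{iteration-L} --- which are formulated inside the components $T^{k}(\Omega_j)$ --- back to $\Omega_j\subset\Omega$ without the loss being inflated by the a priori unbounded factor $\big(E_+(c)/E_-(c)\big)^{k}$. For the standard tent map this costs nothing, since every restriction $T^{k}|_{\Omega_j}$ is affine, and affine maps preserve ratios of Lebesgue measures exactly (and ratios of lengths when $\Omega$ is a segment, a property inherited under the dynamics and under the cuts $x_i=1/2$). For the general piecewise-expanding maps of Remark~\ref{piecewise-linear} one instead invokes the bounded-distortion estimate for the inverse branches --- controlled by a geometric series in the expansion rate --- which replaces exact preservation of ratios by preservation up to a fixed multiplicative constant, to be absorbed into $c_1$. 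A secondary point is keeping the bound in (ii) uniform in $\Omega$: this is exactly why one peels bands one at a time, so that the per-round loss telescopes into $\prod_{n>N_0}(1-F(c)2^{-d\mu^n})$ irrespective of $N$.
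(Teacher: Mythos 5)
Your proposal is correct and follows essentially the same route as the paper: apply Lemma \ref{iteration-L} band by band, pull the retained pieces back into $\Omega$, and bound the loss by the convergent product $\prod_{n}\bigl(1-F(c)2^{-d\mu^{n}}\bigr)$. Your two refinements --- the terminal growth step bridging $2^{-\mu^{N_0+1}}\bm(D)$ to $2^{-\mu^{N_0}}\bm(D)$ (the paper instead writes the product from $j=N_0$, where the hypothesis $N>N_0$ of the Lemma formally fails and that factor can be nonpositive) and the remark that pulling back measure fractions is exact only for the affine/tent case and otherwise needs bounded distortion --- are harmless additions that actually tighten points the paper's own proof passes over silently.
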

\vskip 0.3cm
\begin{proof} Let $N$ be the unique integer such that $2^{-\mu^{N+1}}\bm(D)\le \bm(\Omega)<2^{-\mu^N}\bm(D)$ and denote $\Omega_{N+1}=\Omega$.
 Applying Iteration Lemma \ref{iteration-L} on $\Omega_{N+1}$, there exist disjoint $\Omega_{N+1}^i\subset \Omega_{N+1},\ i=1, 2, \cdots,$ such that  ${\rm(a)}_{N+1}$\ \
 $T^{k(N)} (\Omega_{N+1}^i)$ is a component of $T^{k(N)} (\Omega_{N+1})$ and $ \bm(T^{k(N)} (\Omega_{N+1}^i)) $ $ \ge 2^{-\mu^N} \bm(D)$ for each $i$; ${\rm(b)}_{N+1}$\ \  $\bm(\cup_{i\ge 1}\Omega_{N+1}^i)\ge (1-F(c)  2^{-d\mu^{N}}) \bm(\Omega_{N+1})$.

Let $I_{N}=\{i\mid \bm(T^{k(N)}(\Omega_{N+1}^i))\in [2^{-\mu^N}\bm(D), 2^{-\mu^{N-1}}\bm(D))\}$ and denote the set of all other $i$ by $I'_N$. Let $i\in I_N$ and applying Iteration Lemma \ref{iteration-L} on $T^{k(N)}(\Omega_{N+1}^i)$ and we have that there exist disjoint subcurves or measurable subsets $\Omega_{N}^{i,j}\subset \Omega_{N+1}^i,\ j=1, 2, \cdots$ and $k_i(N)$ such that ${\rm(a)}_{N}^i$
 $T^{k_i(N)}(\Omega_{N}^{i,j})$ is a component of $T^{k_i(N)}(\Omega_{N+1}^{i})$  and $\bm(T^{k_i(N)}(\Omega_{N}^{i,j}))\ge 2^{-\mu^{N-1}}\bm(D)$ for each $j$; ${\rm(b)}_{N}^i$ $\bm(\cup_{j\ge 1}\Omega_{N}^{i,j})\ge (1-F(c)  2^{-d\mu^{N-1}}) \bm(\Omega_{N+1}^{i})$. Thus we have
 $$
 \bm\left(\cup_{i\in I_N}(\cup_j \Omega_{N}^{i,j})\cup(\cup_{i\in I'_N}\Omega_{N+1}^i)\right)\ge (1-F(c)  2^{-d\mu^{N}})(1-F(c)  2^{-d\mu^{N-1}}) \bm(\Omega_{N+1}).
 $$
Moreover, the sets on the left hand side in the above inequality are disjoint with each other, and for each set $\widetilde{\Omega}$ of them there exists a $k(\widetilde{\Omega})$ such that $T^{k(\widetilde{\Omega})}$ is a component and $\bm(T^{k(\widetilde{\Omega})}(\widetilde{\Omega}))\ge 2^{-\mu^{N-1}}\bm(D)$.

 By induction, we can obtain the existence of $\Omega_i, i=1, 2, \cdots$ such that {\rm (i)} and {\rm (ii)} hold true, where $c_1=\prod_{j=N_0}^{N}(1-F(c)  (2^d)^{-\mu^{j}})$, which has a  positive lower bound for all $N$. In fact, it is sufficient to prove that
$\prod_{j=N_0}^{\infty}(1-F(c)(2^d)^{-\mu^{j}})>0$, which can be obtained from the fact that
$\sum_{j=N_0}^{\infty}\ln (1-F(c)(2^d)^{-\mu^{j}})\geq -F(c)\sum_{j=N_0}^\infty (2^d)^{-\mu^j}>-\infty$ (bounded below). This completes the proof.\qquad \hfill\qedbox
\end{proof}

\begin{Remark}
Corollary \ref{Corollary} holds true for any $1<\mu<\left(1-\log_{E_+(c)}\frac{E_-(c)}{a^{1/m_0}}\right)^{-1}$. As $\mu\rightarrow 1$, we obtain the upper bound $\log_{E_+(c)}(E_-(c)  a^{-1/m_0})  F(c)^{-1}$ for $2^{-\mu^{N_0}}$.
\end{Remark}

\section{The ordered part}
In this section, we will prove the ordered part of Theorem \ref{main-theorem}, that is, we will prove (\ref{order-part}) holds true for almost every initial point.  For this purpose, it is sufficient to prove the following theorem.

\vskip 0.3cm

\begin{Theorem}\label{liminf}
For any given $\epsilon>0$ and almost  every initial point $(x_1(0), x_2(0))$ in $[0, 1]^2$, it holds that
${\inf}_{n\in\NN}  \mathrm{dist}((x_1(n),x_2(n)),D_{\mathrm{syn}}) <\epsilon$.
\end{Theorem}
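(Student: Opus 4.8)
The plan is to show that the "bad" set of initial points that stay $\epsilon$-away from $D_{\mathrm{syn}}$ forever has Lebesgue measure zero, by a covering/measure-density argument driven by the Iteration Lemma. First I would fix $\epsilon>0$ and let $D_\epsilon = \{(x_1,x_2)\in[0,1]^2 : \mathrm{dist}((x_1,x_2),D_{\mathrm{syn}})\ge\epsilon\}$, and consider the set $B_\epsilon = \{\bp : T^n(\bp)\in D_\epsilon \text{ for all } n\ge 0\}$. The goal is $\bm(B_\epsilon)=0$; summing over a sequence $\epsilon_k\to 0$ then yields the theorem. The key point to extract from the structure set up in Section 2 is that as long as an orbit stays in $D_\epsilon$, a small curve/region $\Omega$ around the orbit cannot intersect $D_{\mathrm{syn}}$, so $T$ acts on its components without the "synchronization obstruction," and hence the Iteration Lemma (with $m=2$, $f$ the tent map, $E_+(c)=E_-(c)=4(1-2c)$ for sets) applies to grow the measures of components.

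The core mechanism I would run is the following. Take any density point $\bp$ of $B_\epsilon$; around it pick a small square $\Omega$ (of side $\to 0$) in one of the cubes $D_J$; by the density-point property almost all of $\Omega$ lies in $B_\epsilon$, so it suffices to derive a contradiction with a fixed lower bound on the proportion of $\Omega$ that eventually reaches an $\epsilon$-neighborhood of $D_{\mathrm{syn}}$. Here I would verify the "good set" hypothesis: because the relevant orbit avoids the $\epsilon$-neighborhood of $D_{\mathrm{syn}}$, the number of components produced in $m_0$ steps is bounded (the cuts by $x_i=1/2$ are controlled), so $\Omega$ is $(\delta_1,m_0,a)$-good for suitable parameters with $a<E_-^{m_0}(c)$ — this is exactly where weak coupling ($c<c_+$) enters, guaranteeing $E_-(c)>1$ and $a^{1/m_0}<E_-(c)$. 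Then Corollary \ref{Corollary} produces disjoint subsets $\Omega_i\subset\Omega$ with $\bm(\cup_i\Omega_i)\ge c_1\bm(\Omega)$ such that each $T^{k(i)}(\Omega_i)$ is a full component of macroscopic size $\ge 2^{-\mu^{N_0}}\bm(D)$. A component of that fixed size, being the homeomorphic image of a piece of $\Omega$ under the affine-on-each-cube map $T$, must come within $\epsilon$ of $D_{\mathrm{syn}}$ (a fixed-size convex/curve set in $[0,1]^2$ cannot avoid an $\epsilon$-neighborhood of the diagonal once its size exceeds a threshold depending only on $\epsilon$); hence a definite fraction $c_1$ of $\Omega$ leaves $D_\epsilon$ in finite time, contradicting that $\Omega$ is essentially contained in $B_\epsilon$.

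To make the last "a big component meets the $\epsilon$-neighborhood" step clean, I would first reduce to the one-dimensional picture: under $T$ with $f$ the tent map, the quantity $y(n)=x_1(n)-x_2(n)$ evolves (up to sign, on each cube) by $y(n+1)=\pm(1-2c)\cdot 2\,y(n)$ when both coordinates are on the same branch, i.e. the difference coordinate is itself expanded by a factor $>1$, and it only gets "reset" when the two coordinates straddle $1/2$. So I would track the projection of components onto the $y$-axis and argue that a macroscopic component's $y$-extent is bounded below, forcing it to contain a point with $|y|<\epsilon$. The main obstacle I anticipate is bookkeeping the interplay between the cube-subdivision (which shrinks components) and the expansion (which grows them) uniformly over all orbits staying in $D_\epsilon$ — that is, checking the $(\delta_1,m_0,a)$-good hypothesis with $a<E_-^{m_0}(c)$ holds along every such orbit, and ensuring the parameters $\delta_1,\mu,N_0$ can be chosen independently of the density point $\bp$. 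Once that uniformity is in place, the density-point argument and Borel–Cantelli-style summation over $\epsilon_k$ close the proof.
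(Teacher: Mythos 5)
Your outline follows the paper's general architecture (reduce to a positive-fraction statement, feed it into the Iteration Lemma/Corollary \ref{Corollary}, then sweep up via density/Fubini), but it has a genuine gap at the decisive step. You assert that once a component reaches macroscopic size ($\ge 2^{-\mu^{N_0}}\bm(D)$, i.e.\ of order $\delta_1$) it ``must come within $\epsilon$ of $D_{\mathrm{syn}}$''. This is false: a segment of slope $+1$ (parallel to the diagonal) of length up to $\sqrt2$, or a small square such as $[0,0.4]\times[0.6,1]$, has macroscopic size yet stays at a fixed positive distance from $D_{\mathrm{syn}}$; the Corollary only guarantees components of measure $\ge 2^{-\mu^{N_0}}\bm(D)\approx\delta_1=2^{-16}$, far below any threshold that would force proximity to the diagonal. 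Your proposed repair via the difference coordinate $y=x_1-x_2$ fails for the same reason: a lower bound on the $y$-extent of a component does not force its $y$-range to contain values near $0$. This is exactly the hole that the paper's Proposition \ref{step2-L2.2} fills: once a slope-$\pm1$ segment has length $\ge\delta_1$, one keeps iterating and shows its image grows by a factor $1+e$ every one or two steps \emph{unless} it crosses $x_1=x_2$ or $x_1+x_2=1$ (and the tent-map symmetry $f(x)=f(1-x)$ sends the anti-diagonal into the diagonal), so within $i_0=O(\log_{1+e}\delta_1^{-1})$ further steps the image meets the diagonal with a piece of length $\ge\epsilon$ inside $G_{\epsilon}$; pulling back yields a proportion $c_2\sim 2^{-i_0}\epsilon$, which necessarily depends on $\epsilon$ — your claimed $\epsilon$-independent fraction $c_1$ is a symptom of the missing step.

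A secondary problem is your verification of the $(\delta_1,m_0,a)$-good hypothesis on two-dimensional squares: for area the expansion rate is $E_-(c)=4(1-2c)$ per step, while a small set straddling both lines $x_1=1/2$, $x_2=1/2$ can be cut into $4>4(1-2c)$ components in one step, so ``avoiding the $\epsilon$-neighborhood of the diagonal'' does not by itself give $a<E_-^{m_0}(c)$. The paper sidesteps this by foliating into slope-$\pm1$ segments (preserved by the piecewise-affine $T$), for which $E_-(c)=2(1-2c)$ for curves and a concrete geometric count (images of the critical lines land near $x_i=1$ and stay away from $x_i=1/2$ for six steps) gives at most $a=4$ components per $m_0=6$ iterates; the near-diagonal component-count control you invoke is the mechanism of the \emph{disordered} part (Proposition \ref{step1}), not of Theorem \ref{liminf}. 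Until both points are repaired — most importantly the macroscopic-segment-to-$G_\epsilon$ step — the proposal does not prove the theorem.
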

\vskip 0.2cm
\begin{Remark} \ Theorem \ref{liminf} implies that for almost every point, its orbit will enter the $\epsilon$-neighborhood of the diagonal $x_1=x_2$ for at least one time.
We will use it to prove that the orbit of almost every point will enter into (or stay in) the $\epsilon$-neighborhood of the diagonal $x_1=x_2$ for infinitely many times, which is just (\ref{order-part}).
\end{Remark}
\vskip 0.2cm

 \noindent{\it Proof of (\ref{order-part}) from Theorem \ref{liminf}} \quad
 For any positive integer $i$, let $$D_i=\{(x_1^{(i)}(0), x_2^{(i)}(0))\in [0, 1]^2\mid  \mathrm{dist}( (x_1^{(i)}(n),x_2^{(i)}(n)), D_{\mathrm{syn}})  \ge \frac{1}{i}, \text{for all }\ n\in\NN\}.$$ Obviously, to obtain (\ref{order-part}),
  it is sufficient to prove that $\bm(\cup_{i\ge 1} D_i)=0$ (note that $D_{syn}$ is an invariant set). Setting $\epsilon=\frac{1}{i}$ in Theorem \ref{liminf}, we obtain that $\bm(D_i)=0$, which immediately implies $\bm(\cup_{i\ge 1} D_i)=0$.
\hfill\qedbox

Let $G_{\epsilon}=\{\bp=(x_1,x_2)\in [0, 1]^2 | \mathrm{dist} (\bp,D_{\mathrm{syn}}) \le \epsilon\}$ and ${B}_{\epsilon}=[0, 1]^2\backslash G_{\epsilon}$. 
Theorem \ref{liminf} is a corollary of the following lemma.
\begin{Lemma}\label{L2.2}
There exists a constant $c_1>0$ such that for any $
0\le c<c_1$ and $\epsilon>0$, there exists $c_0\equiv c_0(c,\epsilon)>0$ such that any  segment $\Gamma$ with a slope $\pm 1$ in $[0, 1]^2$ has disjoint subsegments $\Gamma_1, \Gamma_2, \cdots$
 satisfying {\rm (i)} for any $i$, there exists $l_i$ such that
 $T^{l_i}(\Gamma_i)\subset G_{\epsilon}$; {\rm (ii)}
 $\bm(\cup_i\Gamma_i)\ge c_0  \bm(\Gamma)$.\end{Lemma}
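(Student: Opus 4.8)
\textbf{Proof proposal for Lemma \ref{L2.2}.}

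The plan is to reduce the statement to the Iteration Lemma \ref{iteration-L} (more precisely to Corollary \ref{Corollary}) applied to the one-dimensional family of segments with slope $\pm 1$, and then to propagate a fixed-size component all the way into $G_\epsilon$ by a short deterministic argument. First I would set up the right ``phase space'' for curves: take $D=[0,1]^2$ and work with simple curves $S$ that are unions of segments of slope $\pm 1$. The crucial structural observation is that this class is \emph{invariant} under $T$: since $f$ is the standard tent map, on each of the four subsquares $D_J$ the map $T$ is affine with matrix $(I+cA)\,\mathrm{diag}(\pm 2,\pm 2)$, which sends a line of slope $\pm 1$ to a line of slope $\pm 1$ (here one uses $m=2$ and that $A(c)$ is symmetric, so conjugating $\mathrm{diag}(\pm1,\pm1)$ by $A(c)$ preserves the two eigendirections $\mathbf e$ and $(1,-1)$ of $A$). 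Hence every component of every $T^i(\Gamma)$ is again a segment of slope $\pm 1$, and cutting by the lines $x_1=1/2$, $x_2=1/2$ a segment meets at most both lines, so each application of $T$ produces at most $2$ components from one component, and $T^{m_0}$ produces at most $2^{m_0}$. Thus every segment $\Gamma$ of slope $\pm1$ is automatically $(\delta_1,m_0,a)$-good with, say, $m_0=1$ and $a=2$, \emph{provided} $a<E_-(c)^{m_0}$, i.e. $2<2(1-2c)$ fails — so I would instead pick $m_0$ large and $a=2$ (or just note we only need $a<E_-^{m_0}$, which holds once $m_0$ is large since $E_-(c)=2(1-2c)>1$ for $c$ small; concretely this is where the constant $c_1$ in the statement comes from). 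For curves $E_+(c)=2$ and $E_-(c)=2(1-2c)$, exactly as computed in the excerpt.

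Next I would invoke Corollary \ref{Corollary}: there is a constant $c_1'>0$ (depending only on $c$ through $\mu,N_0,d,F(c)$) so that for any such $\Gamma$ with $\bm(\Gamma)\le\delta_1\bm(D)$ there are disjoint subsegments $\widetilde\Gamma_i\subset\Gamma$ and exponents $k(i)$ with $T^{k(i)}(\widetilde\Gamma_i)$ a component of $T^{k(i)}(\Gamma)$ of length $\ge 2^{-\mu^{N_0}}\bm(D)=:\ell_0$, and with $\bm(\cup_i\widetilde\Gamma_i)\ge c_1'\,\bm(\Gamma)$. (If $\bm(\Gamma)>\delta_1\bm(D)$ to begin with, first chop $\Gamma$ into finitely many pieces of length $\le\delta_1\bm(D)$ and apply the above to each; this only changes the constant.) So after this step I have, with definite total measure proportion, a family of subsegments each of which is mapped by some iterate of $T$ onto a \emph{macroscopic} segment of slope $\pm1$ and length at least the fixed constant $\ell_0$, independent of the starting segment.

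The remaining step is the deterministic ``every long segment eventually lands in $G_\epsilon$'' claim: there is a fixed integer $L=L(c,\epsilon,\ell_0)$ such that every segment $\sigma$ of slope $\pm1$ with $\bm(\sigma)\ge\ell_0$ contains a subsegment $\sigma'$ with $T^{L'}(\sigma')\subset G_\epsilon$ for some $L'\le L$, and with $\bm(\sigma')\ge c_2\,\bm(\sigma)$ for a constant $c_2>0$. I would prove this by the same growth mechanism restricted to the diagonal direction: components of $T^i(\sigma)$ are segments of slope $\pm1$, they grow by the expansion factor of $T$ along the $(1,-1)$ direction — which on each $D_J$ is $2(1-2c)>1$ — until some component has length comparable to $1$ and therefore must cross the $\epsilon$-neighborhood $G_\epsilon$ of the diagonal (a segment of slope $\pm1$ and length $\gtrsim 1$ in $[0,1]^2$ necessarily meets $D_{\mathrm{syn}}$ transversally and contains a subsegment of length $\asymp\epsilon$ lying in $G_\epsilon$); tracking the single component that keeps growing and using that at each step the ``good'' portion loses at most a bounded fraction (a segment is cut into at most $2$ pieces, so the longest piece keeps at least half the length, and crossing lengths of the cutting lines are lost — summably) gives the constant $c_2$ and the bound $L$. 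Composing: set $\Gamma_i=(T^{k(i)}\!\mid_{\widetilde\Gamma_i})^{-1}(\sigma'_i)$ where $\sigma'_i$ is the good subsegment of the macroscopic image $T^{k(i)}(\widetilde\Gamma_i)$; then $l_i=k(i)+L'_i$ works, the $\Gamma_i$ are disjoint (they lie in disjoint $\widetilde\Gamma_i$), and $\bm(\cup_i\Gamma_i)\ge c_1'\,c_2\cdot(\text{contraction loss})\cdot\bm(\Gamma)\ge c_0\,\bm(\Gamma)$, using $E_+(c)^{-k(i)}\le\bm(\Gamma_i)/\bm(\sigma'_i)$ and the uniform lower bound $\ell_0$ to control $k(i)$ — actually cleaner: since $T^{k(i)}$ is a homeomorphism from $\widetilde\Gamma_i$ onto $T^{k(i)}(\widetilde\Gamma_i)$ with distortion bounds coming from $E_\pm$, the proportion $\bm(\sigma'_i)/\bm(T^{k(i)}\widetilde\Gamma_i)\ge c_2$ pulls back to $\bm(\Gamma_i)/\bm(\widetilde\Gamma_i)\ge c_2 (E_-/E_+)^{k(i)}$, and $k(i)$ is bounded because $\bm(T^{k(i)}\widetilde\Gamma_i)\ge\ell_0$ forces $k(i)\le\log_{E_-}(\ell_0^{-1})$-ish — no, $k(i)$ can be large if $\widetilde\Gamma_i$ is tiny; so here I would instead not insist on a uniform $c_2$ per segment but rather sum: $\sum_i\bm(\Gamma_i)\ge c_2'\sum_i\bm(\widetilde\Gamma_i)\ge c_2' c_1'\bm(\Gamma)$, where $c_2'$ absorbs the worst-case distortion over the bounded number of extra steps $L$, which is the point that needs the most care.

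The main obstacle I anticipate is precisely this last bookkeeping: ensuring that pulling the ``good fraction'' back through $T^{k(i)}$ and through the extra $L'_i\le L$ steps does not destroy the uniform lower bound on the measure proportion. The clean way around it is to fold the extra $L$ steps \emph{into} the Iteration-Lemma machinery from the start — i.e., run Corollary \ref{Corollary} not to reach ``a macroscopic component'' but directly to reach ``a component meeting $G_\epsilon$'', by choosing the target scale $2^{-\mu^{N_0}}\bm(D)$ smaller than a threshold below which every slope-$\pm1$ segment of that length, after at most $L$ further controlled iterations, has a definite-proportion subsegment inside $G_\epsilon$; then (ii) of the Corollary already gives (ii) of the Lemma with $c_0=c_1\cdot(\text{loss over }L\text{ steps})$, and disjointness is inherited. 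The other place requiring attention is verifying the hypothesis $a<E_-^{m_0}(c)$ and that the resulting constants $\mu,N_0,d,F(c)$ stay bounded as $\epsilon$ shrinks (they do, since they don't depend on $\epsilon$ — only the number $L$ of extra steps and hence the final $c_0$ depends on $\epsilon$, matching the statement $c_0=c_0(c,\epsilon)$ while $c_1$ is uniform).
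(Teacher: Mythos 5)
Your overall architecture (Corollary \ref{Corollary} to blow small subsegments up to a fixed macroscopic scale, then a deterministic argument driving a long segment into $G_{\epsilon}$, then pulling back) is the same as the paper's, but both geometric steps that carry the actual content are missing or wrong in your write-up. First, the $(\delta_1,m_0,a)$-goodness of slope-$\pm1$ segments is \emph{not} automatic: a slope-$\pm1$ segment can cross both $x_1=1/2$ and $x_2=1/2$, so one application of $T$ can already produce three components, and by your own counting $m_0$ steps give up to $2^{m_0}$ (in fact $3^{m_0}$) components, which never satisfies $a<E_-^{m_0}(c)=(2(1-2c))^{m_0}$. Declaring ``$m_0$ large and $a=2$'' presupposes precisely the nontrivial fact that folds are rare for short segments; the paper proves it (Proposition \ref{firststep-L2.2}, with $m_0=6$, $a=4$) by observing that for small $c$ the images of the fold lines $x_i=1/2$ land near $x_i=1$ and stay far from the cutting lines for the next several iterates, so a component of length $\le\delta_1$ can fold at most a bounded number of times in six steps. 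Without this input your application of Corollary \ref{Corollary} has no verified hypothesis.

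Second, your mechanism for getting a macroscopic segment into $G_{\epsilon}$ does not work as stated. Keeping ``the longest piece, at least half the length'' after each cut gives a net factor at most $\tfrac12\cdot 2(1-2c)=(1-2c)<1$, so it yields no growth; and the claim that a slope-$\pm1$ segment of length $\gtrsim 1$ must meet $D_{\mathrm{syn}}$ is false for slope $+1$ (a long segment parallel to the diagonal stays at a fixed positive distance from it). The paper's Proposition \ref{step2-L2.2} avoids both problems with two specific geometric facts you do not use: (i) if $T(\Gamma)$ crosses both $x_1=1/2$ and $x_2=1/2$ then it crosses $x_1+x_2=1$, whose image lies on the diagonal because $f(1-x)=f(x)$; (ii) if it crosses only one line and the two pieces are comparable in length, then after one more iterate both images have grown by the factor $1+e$ with $e=\frac{\lambda^2}{\lambda+1}-1$, and at least one of them lies in a single small square because the two images (slopes $+1$ and $-1$) lie on lines symmetric about a vertical line and hence cannot both cross $x_1=1/2$. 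This is what forces, within a bounded number $i_0\le\lfloor-\log_{1+e}\delta_1\rfloor+1$ of steps, an intersection with the diagonal of length $\ge\epsilon$ inside $G_{\epsilon}$. Finally, the distortion bookkeeping you agonize over is a non-issue here: on each component chain $T^{k}$ is affine (the tent map is piecewise affine on each small square), so length ratios pull back exactly, and the only loss is the factor $2^{-i_0}\epsilon$ over the \emph{bounded} number of extra steps, which is exactly how the paper gets $c_0=c_0(c,\epsilon)$; your proposed fix of ``folding the extra steps into the Iteration Lemma'' is not needed and is not what makes the constant uniform.
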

\vskip 0.2cm
{\it Proof of Theorem \ref{liminf} from Lemma \ref{L2.2}}\ \qquad
From Lemma \ref{L2.2}, we have that for any segment $\Gamma$ with a slope $\pm 1$ in $[0, 1]^2$, there exist disjoint subsegments $\Gamma_1,\Gamma_2, \cdots$ satisfying {\rm (i)} and {\rm (ii)}.
 Obviously $\Gamma\backslash (\cup_i \Gamma_i)$
 is composed of a collection of disjoint subsegments of $\Gamma$, which is denoted by $\cup_j\Gamma_j'$.
 From {\rm (ii)}, we know that
 $\bm(\cup_j\Gamma_j')\le (1-c_0)\bm(\Gamma)$.

 Applying Lemma \ref{L2.2} again on each $\cup_j\Gamma_j'$,
 we obtain that for each $j$, there exist disjoint subsegments
 $\Gamma_{j,k}'\subset \Gamma_j',\ k=1, 2, \cdots$ satisfying
 $\bm(\cup_k\Gamma_{j,k}')\ge c_0  \bm(\Gamma_j')$
 and for any $j, k$, there exists $l(j,k)$ such that $T^{l(j,k)}
 (\Gamma_{j,k}')\subset G_{\epsilon}$. It is easy to see that
 $\cup_j(\Gamma_j'\backslash (\cup_k \Gamma_{j,k}'))$
is also composed of disjoint subsegments of $\Gamma$ and the total length of them is not larger than $(1-c_0)^2  \bm(\Gamma)$.
Inductively, we can obtain that for any $i$, we can find disjoint segments $\Gamma_{i,k}\subset \Gamma,\ k=1, 2, \cdots$ such that
for each $k$ there exists $l(i,k)$ such that
$T^{l(i,k)}(\Gamma_{i,k})\subset G_{\epsilon}$ and
$\bm(\Gamma\backslash \cup_{k}\Gamma_{i,k})\le (1-c_0)^i  \bm(\Gamma)$. Let $i\rightarrow \infty$, we obtain that the set of points in $\Gamma$ whose orbit is always out of $G_{\epsilon}$ is of measure zero. Then from Fubini's Theorem, we obtain Theorem \ref{liminf}.
\hfill \qedbox
\vskip 0.5cm

\noindent  The proof of Lemma \ref{L2.2} can be reduced to the following two propositions.
\begin{Proposition}\label{firststep-L2.2} There exists a constant $c_1>0$, such that for any $0\le c<c_1$, there is a  $\hat{c}_0\equiv \hat{c}_0(c)>0$, for any segment $\Gamma_0$ in one of four small squares of $[0, 1]^2$ with a slope $\pm 1$ and a length less than $\delta_1=2^{-16}$, there exists a collection of disjoint subsegments $\Gamma_i,\ i=1, 2, \cdots$ satisfying $\sum_i \bm(\Gamma_i)\ge\hat{c}_0 \bm(\Gamma_0)$ such that for each segment $\Gamma_i$,  there exists some $l(\Gamma_i)$ such that $T^{l(\Gamma_i)}(\Gamma_i)$ is a component of $T^{l(\Gamma_i)}(\Gamma_0)$ and $T^{l(\Gamma_i)}(\Gamma_i)$ is a segment with a slope  $\pm 1$ and a length larger than $\delta_1$.
\end{Proposition}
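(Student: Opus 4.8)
The plan is to deduce Proposition~\ref{firststep-L2.2} from Corollary~\ref{Corollary} (applied with $D=[0,1]^2$), after isolating one geometric feature of the standard tent map. On each of the four closed squares $D_J$ cut out by $\{x_1=1/2\}$ and $\{x_2=1/2\}$ the map $T$ is affine with linear part $A(c)\,\mathrm{diag}(\pm 2,\pm 2)$, where $A(c)=I+cA=\left(\begin{smallmatrix}1-c&c\\ c&1-c\end{smallmatrix}\right)$. Since $A(c)$ fixes the direction $(1,1)$ and scales $(1,-1)$ by $1-2c$, while $\mathrm{diag}(\pm 2,\pm 2)$ only permutes the two directions $(1,1),(1,-1)$ up to sign, the map $T|_{D_J}$ sends a segment of slope $+1$ or $-1$ to a segment of slope $+1$ or $-1$, with length multiplied by exactly $2$ or $2(1-2c)$. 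Hence for $0\le c<1/4$ every such segment is strictly expanded, with one-step length factor in $[E_-(c),E_+(c)]=[2(1-2c),2]\subset(1,2]$, and ``being a slope-$\pm1$ segment inside one small square'' is precisely the good property preserved componentwise under $T$ discussed in Section~2; by induction every component of every $T^j(\Gamma_0)$ is again a slope-$\pm1$ segment in a small square. (By the obvious symmetries it suffices to treat $\Gamma_0$ in a single fixed square.)

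With this in hand, the proposition reduces to showing that every slope-$\pm1$ segment of length $\le\delta_1=2^{-16}$ lying in one small square is $(\delta_1,m_0,a)$-good for suitable $m_0\in\NN$ and a suitable integer $a<E_-(c)^{m_0}=(2(1-2c))^{m_0}$. Granting this, and using $E_+(c)=2$ exactly (so $\log_{E_+(c)}\delta_1=-16$ and the quantities $F(c),N_0$ of Lemma~\ref{iteration-L} are explicitly controlled), Corollary~\ref{Corollary} applied to $\Omega=\Gamma_0$ produces disjoint subsegments $\Gamma_i\subset\Gamma_0$ with $\bm(\cup_i\Gamma_i)\ge c_1\bm(\Gamma_0)$ and, for each $i$, an integer $l(\Gamma_i)=k(i)$ such that $T^{l(\Gamma_i)}(\Gamma_i)$ is a component of $T^{l(\Gamma_i)}(\Gamma_0)$ with $\bm\bigl(T^{l(\Gamma_i)}(\Gamma_i)\bigr)\ge 2^{-\mu^{N_0}}\bm(D)=2^{-\mu^{N_0}}$; by the first paragraph this component is a slope-$\pm1$ segment. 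By the Remark following Corollary~\ref{Corollary}, for $\mu$ close enough to $1$ the number $2^{-\mu^{N_0}}$ is as close as we wish to $\log_{E_+(c)}\bigl(E_-(c)a^{-1/m_0}\bigr)F(c)^{-1}$; the point of the parameter choice is that, for all $c$ below some $c_1>0$, one can select $m_0$, $a$ (with $a<(2(1-2c))^{m_0}$) and $\mu$ so that this limiting value exceeds $2^{-16}=\delta_1$, whence $\bm\bigl(T^{l(\Gamma_i)}(\Gamma_i)\bigr)>\delta_1$. Taking $\hat c_0:=c_1$ then gives exactly the conclusion of Proposition~\ref{firststep-L2.2}.

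The crux, and the step I expect to be the main obstacle, is the $(\delta_1,m_0,a)$-goodness: for every slope-$\pm1$ segment $S_1$ of length $\le\delta_1$ in a small square, the number of components of $T^{m_0}(S_1)$ must be bounded by an integer $a$ strictly below $(2(1-2c))^{m_0}$. That number equals $1$ plus the number of points of $S_1\cap\bigcup_{l=1}^{m_0}T^{-l}\bigl(\{x_1=1/2\}\cup\{x_2=1/2\}\bigr)$. The naive estimate ``a slope-$\pm1$ segment meets the two critical lines at most twice, so each component splits into at most $3$ per step'' only yields $a\le 3^{m_0}$, which is worthless since $3>2(1-2c)$; the gain has to come from $S_1$ being short. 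The mechanism is that, the expansion factor being at most $2$, a freshly produced short piece lies well inside a small square and its forward images need several steps of growth before they can straddle a critical line again; equivalently, $T^{-l}\bigl(\{x_1=1/2\}\cup\{x_2=1/2\}\bigr)$ is a union of nearly-vertical and nearly-horizontal segments (slopes $\pm(1-c)/c$, $\pm c/(1-c)$) whose spacing, measured along a slope-$\pm1$ direction, contracts by only a bounded factor per level, so a slope-$\pm1$ segment of length $\lesssim\delta_1$ meets the first $m_0$ of these families in a number of points of order $\delta_1\cdot 2^{m_0}$ rather than $2^{m_0}$ or $3^{m_0}$. Turning this into an actual bound $a<(2(1-2c))^{m_0}$ — by tracking the induced one-dimensional dynamics of the offset parameter of the segment ($x_2-x_1$ for slope $+1$, $x_1+x_2$ for slope $-1$) and carefully bookkeeping the steps at which a component can cross $\{x_1=1/2\}$ or $\{x_2=1/2\}$ — is what forces the range $0\le c<c_1$ (morally $c<1/4$) and pins down $\delta_1=2^{-16}$, and is where essentially all the work of this section goes.
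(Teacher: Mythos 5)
Your first two paragraphs follow exactly the paper's route: exploit that on each small square $T$ is affine and carries slope-$\pm1$ segments to slope-$\pm1$ segments with factor $2$ or $2(1-2c)$, and then feed $\Gamma_0$ into Corollary \ref{Corollary} with $D=[0,1]^2$, so that everything hinges on verifying that every short slope-$\pm1$ component is $(\delta_1,m_0,a)$-good with $a<E_-(c)^{m_0}$. But that verification is precisely what you do not supply: you yourself call it ``the main obstacle'' and ``where essentially all the work of this section goes'', and the mechanism you sketch (counting intersections with $\bigcup_{l\le m_0}T^{-l}(\{x_1=1/2\}\cup\{x_2=1/2\})$ and asserting the count is of order $\delta_1 2^{m_0}$) is not in the form the goodness condition requires, namely a uniform integer bound $a$ on the number of components of $T^{m_0}(S_1)$ for \emph{every} component $S_1$ of \emph{every} $T^i(\Gamma_0)$ of length at most $\delta_1\,\bm(D)$, not an order-of-magnitude or average count along the original segment. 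The paper closes this gap with a short dynamical observation you never make: since $f(1/2)=1$, when a component of length $\le\delta_1$ crosses $x_1=1/2$ (it can cross a vertical line at most once, being a slope-$\pm1$ segment) its entire image lies within $O(c)+O(2^{j}\delta_1)$ of the edge $x_1=1$; the forward images of that edge land near $x_1=0$ and need roughly $\log_2\delta_1^{-1}$ doublings to return to $x_1=1/2$, so within a $6$-step window each coordinate can fold at most once and $T^{6}(S_1)$ has at most $4$ components. This gives $(\delta_1,6,4)$-goodness with $4<(2(1-2c))^{6}$, after which Lemma \ref{iteration-L} and Corollary \ref{Corollary} apply. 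Without some such argument your proposal reduces the proposition to its hardest step rather than proving it.

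A secondary point: your device for ensuring that the components produced by Corollary \ref{Corollary} have length exceeding $\delta_1$ (letting $\mu\to1$ and invoking the Remark after the corollary, then asserting that suitable $m_0,a,\mu$ make the limiting value exceed $2^{-16}$) is also left unchecked; with the natural choices one has $\log_{E_+(c)}\bigl(E_-(c)a^{-1/m_0}\bigr)\approx 2/3$ and $\log_2 F(c)\approx 13$, and whether $2^{-\mu^{N_0}}\bm(D)$ actually clears $\delta_1$ is a delicate piece of bookkeeping (the paper simply takes $m_0=6$, $a=4$, $\mu=2$, $N_0=4$). This is shared with, and fixable along the lines of, the paper's own computation, but as written it is an unverified assertion stacked on top of the missing goodness estimate, which remains the substantive gap.
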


\noindent{\it Proof.}\quad \  From the condition on $\Gamma_0$
and the expansion of $T$, we have that $\bm(T(\Gamma_0))\ge
 2(1-2c)  \bm(\Gamma_0)$ From the definition of $T$, we know that for small $c$, number of components for short segments increases very slowly as the iterations go forward. For example, it can be easily seen that there are disjoint sets $\widehat{\Gamma}_i, i=1, 2, 3, 4$ such that $\cup_{i=1}^4 \widehat{\Gamma}_i=\Gamma_0$ and $T^6(\widehat{\Gamma}_i),\ i=1, 2, 3, 4$ are all the components of $T^6(\Gamma_0)$ (note that for small $c$, the image of $x_1, x_2=1/2$ under $T$ are close to $x_1=1$ and $x_2=1$, respectively. In addition, the images of $x_1, x_2=1$ under $T^i$ for $i\le 6$ is far from $x_1, x_2=1/2$).
Applying  Iteration Lemma \ref{iteration-L} and Corollary \ref{Corollary} with $m_0=6, a=4, \mu=2, N_0=4$ and $D=[0,1]^2$, the conclusion is obtained.
\hfill \qedbox

\begin{Proposition} \label{step2-L2.2}
There exists a constant $c_1>0$ such that for any $0\le c<c_1$ and $\epsilon>0$, there exists  $c_2\equiv c_2(c,\epsilon)>0$ such that
for any segment $\Gamma_0$ in some small square of $[0, 1]^2$ with a slope  $\pm 1$ and a length larger than $\delta_1=2^{-16}$,  there  exists a segment $\Gamma\subset \Gamma_0$ and $l$ such that $T^l(\Gamma)\subset G_{\epsilon}$ and
$\bm(\Gamma)\ge c_2  \bm(\Gamma_0)$.
\end{Proposition}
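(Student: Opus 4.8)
The plan is to show that a sufficiently long segment of slope $\pm 1$ possesses a subsegment a definite fraction of whose length maps, after finitely many steps, into the $\epsilon$-neighborhood $G_\epsilon$ of the diagonal. The starting observation is that segments of slope $\pm 1$ behave very tamely under $T$ when $c$ is small: since $T$ acts as $(x_1,x_2)\mapsto (1-c)\mathbf f + c\,\mathbf f(\text{swap})$, on each of the four subsquares $T$ is affine with matrix $(I+cA)\,\mathrm{diag}(\pm 2,\pm 2)$, which maps a line of slope $+1$ to a line of slope $\pm 1$ (the coupling $I+cA$ with $A\mathbf e=0$ in the $m=2$ case is $\begin{pmatrix}1-c & c\\ c & 1-c\end{pmatrix}$, preserving the directions $\mathbf e$ and $(1,-1)$, and the tent map contributes only a sign flip). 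Hence the class of ``segments of slope $\pm 1$'' is preserved under $T$ on each monotonicity branch, and $T$ expands length along such a segment by the factor $2(1-2c)$ on the $(1,-1)$-component side, i.e. $E_-(c)=2(1-2c)>1$ for $c<1/4$; this is exactly the curve version of $E_\pm(c)$ recorded before Lemma \ref{iteration-L}.

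First I would apply Proposition \ref{firststep-L2.2}'s growth mechanism in reverse order: given $\Gamma_0$ already long (length $>\delta_1=2^{-16}$), I iterate $T$ and track components. Because each time the segment is split only when it meets $\{x_1=1/2\}$ or $\{x_2=1/2\}$, and because a long segment of slope $\pm1$ has bounded length $\le \sqrt2$, after one step the number of components grows by at most a bounded amount; more importantly, a positive fraction of the length lands in a single component that again has length comparable to (in fact, by expansion, eventually exceeding) $\delta_1$. Iterating, I obtain after some bounded number $l_1$ of steps a component $\Gamma'$, a definite fraction of the length of $\Gamma_0$, which is a segment of slope $\pm1$ whose \emph{projection onto the anti-diagonal direction} $(1,-1)$ has length at least some fixed $\rho_0>0$ (independent of $\Gamma_0$) — here one uses that expansion by $2(1-2c)$ cannot be undone and that the phase space has diameter $\le\sqrt2$, so the segment cannot keep growing without either exiting a subsquare in a controlled way or already being ``macroscopically long.'' Equivalently: one reduces to the case where $\Gamma_0$ is a macroscopically long segment of slope $\pm 1$ crossing the whole square in the anti-diagonal direction.

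Next, for such a macroscopically long slope-$\pm1$ segment $\Gamma'$ I must produce a subsegment mapping into $G_\epsilon$. Parametrize $\Gamma'$ by its anti-diagonal coordinate $u=x_1-x_2$; then $|u|$ ranges over an interval of length $\ge\rho_0$, and $u=0$ is the diagonal. The key dynamical fact is the behavior of the difference coordinate under $T$: writing $u(n)=x_1(n)-x_2(n)$, on each monotonicity branch we get $u(n+1)=(1-2c)\bigl(f(x_1(n))-f(x_2(n))\bigr)=\pm 2(1-2c)\,u(n)$ when $x_1(n),x_2(n)$ lie on the same branch of the tent map, and $u(n+1)=(1-2c)\bigl(2-2x_1(n)-2x_2(n)\bigr)$-type expressions otherwise. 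So along the piece of $\Gamma'$ where both endpoints stay on the same branch, $|u|$ is multiplied by $2(1-2c)$ and stays of slope $\pm1$; I follow the subsegment on which $|u|$ is smallest and iterate until $|u|$ first exceeds some fixed small threshold $\beta_0$ — this takes a bounded number of steps and consumes only a bounded fraction of length, leaving a subsegment $\Gamma''$ (a fixed fraction of $\bm(\Gamma_0)$) which is a slope-$\pm1$ segment with $|u|\in[\beta_0/2,\beta_0]$ along it, i.e. $\Gamma''$ lies entirely in a thin strip parallel to the diagonal, with both coordinates on the same tent branch. From there, continue to expand $|u|$ by $2(1-2c)$ per step while it stays $\le 1/4$ say; once the segment $\Gamma''$ has been expanded so that along it $|u|$ traverses an interval of length comparable to $1$, its image has bounded length but contains a point with $x_1=x_2$ (because $u$ changes sign or passes through $0$ on an adjacent branch-crossing), and more to the point contains a full subsegment on which $0<|u|<\epsilon$, i.e. a subsegment inside $G_\epsilon$. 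That subsegment is a $T^{l}$-image of a subsegment of $\Gamma_0$ whose length, by the bounded expansion factor $\le 2$ per step over the bounded number of steps, is at least $c_2(c,\epsilon)\,\bm(\Gamma_0)$ for an explicit $c_2>0$; this gives (i) and (ii).

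The main obstacle — and the step deserving the most care — is the passage through the singular lines $\{x_1=1/2\}$, $\{x_2=1/2\}$ in the intermediate iterations: one must guarantee that while forcing $|u|$ to grow from microscopic to macroscopic, the subsegment one is following does not get shredded into too many tiny components, so that a \emph{definite fraction} of the original length survives in a single good component. This is handled exactly by the $(\delta_1,m_0,a)$-good bookkeeping of the Iteration Lemma: for $c$ small the two singular lines map far from themselves under a bounded number of iterates (the remark in Proposition \ref{firststep-L2.2}'s proof: images of $x_i=1/2$ are near $x_i=1$, and images of $x_i=1$ stay away from $x_i=1/2$ for $\le 6$ steps), so over any bounded window the number of components stays bounded by a constant, and choosing $c_2$ smaller than the product of the resulting finitely many fractional losses finishes the proof. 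The only genuinely quantitative check is that the number of steps needed to inflate $|u|$ from $\gtrsim \delta_1\cdot(\text{const})$ — which is what we can guarantee after Proposition \ref{firststep-L2.2} — up to a threshold forcing a $G_\epsilon$-crossing is bounded in terms of $c$ and $\epsilon$ only, which follows since each step multiplies $|u|$ by the fixed factor $2(1-2c)>1$.
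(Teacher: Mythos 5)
Your overall mechanism is in the right spirit and partly coincides with the paper's: iterate, keep a single in-square component, let the expansion work, and catch the diagonal through a fold at $x_1=\tfrac12$ or $x_2=\tfrac12$ (your remark that "$u$ passes through $0$ at an adjacent branch-crossing" is the paper's observation that a slope-$\pm1$ segment meeting both centre lines meets $x_1+x_2=1$, and the anti-diagonal is mapped into $D_{\mathrm{syn}}$ because $f(x)=f(1-x)$); the concluding pull-back estimate, contraction by at most $2$ per step over boundedly many steps, is also the paper's. However, there is a genuine gap exactly at the step that carries all the weight: the claimed reduction, "after some bounded number $l_1$ of steps", to a single component which is a definite fraction of $\bm(\Gamma_0)$ and is "macroscopically long \dots crossing the whole square in the anti-diagonal direction". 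As stated this is either incoherent or self-defeating: along a slope $+1$ segment the coordinate $u=x_1-x_2$ is constant, so only a slope $-1$ segment has a macroscopic anti-diagonal projection, and a slope $-1$ segment spanning the square already contains a point of $D_{\mathrm{syn}}$ — so your reduction is as strong as the Proposition itself and is nowhere proved. The justification you offer (expansion cannot be undone, the square has finite diameter, plus the $(\delta_1,m_0,a)$-good bookkeeping of Lemma \ref{iteration-L}) does not cover this regime: the total length of $T^k(\Gamma_0)$ grows, but a fold near the midpoint of the current component can prevent any single component from growing (expansion by $\lambda=2(1-2c)$ followed by splitting in half gains essentially nothing), and the component-count control used for Proposition \ref{firststep-L2.2} ($a=4$ components per $m_0=6$ steps) is only valid for components of length $\le\delta_1$, i.e. it stops precisely at the scale where Proposition \ref{step2-L2.2} begins.

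Ruling out this stagnation is the actual content of the paper's proof, and it is missing from yours: with $e=\frac{\lambda^2}{\lambda+1}-1$, either some component of $T(\Gamma_0)$ already has length $\ge(1+e)\bm(\Gamma_0)$, or both components have length $\ge(\lambda-1-e)\bm(\Gamma_0)$, so their images have length $\ge\lambda(\lambda-1-e)=(1+e)\bm(\Gamma_0)$, and — the key geometric point — these two image segments lie on lines symmetric about a vertical line, so they cannot both be cut by the centre lines; combined with the anti-diagonal observation this yields growth of a single in-square slope-$\pm1$ component by the factor $1+e$ every one or two steps, hence a forced diagonal crossing after at most $\lfloor-\log_{1+e}\bm(\Gamma_0)\rfloor+1$ steps, which is what bounds the number of iterations and produces $c_2$. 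Without an argument of this type, your "bounded number of steps" and "definite fraction survives" assertions are unsupported, and your second phase (inflating $|u|$ to a threshold $\beta_0$ and then catching a sign change of $u$) inherits the same defect, since it again needs one component of definite length to survive an a priori uncontrolled number of folds. If you supply the $(1+e)$-growth dichotomy (or some substitute for it), the rest of your sketch can be completed along the paper's lines.
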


{\it Proof.}\quad
 We first claim that there exists a constant $e>0$ such that for any $\Gamma_0$ in some small square with a slope $\pm 1$ and a length larger than $\delta_1=2^{-16}$, there exists a segment $\Gamma_1$ with a slope $\pm 1$ in the curve $T(\Gamma_0)$ or $T^2(\Gamma_0)$ such that $\Gamma_1$ is in some small squares and $\bm(\Gamma_1)\ge (1+e)\bm(\Gamma_0)$.

Since $\bm(T(\Gamma_0))\geq 2(1-2c)\bm(\Gamma_0)$, if $T(\Gamma_0)$ is in some small square, the claim is proved by setting $\Gamma_1=T(\Gamma_0)$.

 Thus we consider the case that the intersection between $T(\Gamma_0)$ and $x_2=1/2$ (or $x_1=1/2$) is nonempty. If both $x_1=1/2$ and $x_2=1/2$ have an intersection set with $T(\Gamma_0)$, then it is not difficult to see that $T(\Gamma_0)$ has an intersection set with $x_1+x_2=1$, this ends the proof of this proposition. Hence without loss of generality we assume $T(\Gamma_0)$ only crosses $x_2=1/2$ (or $x_1=1/2$).

   Let $\Gamma_{1,1}$ and $\Gamma_{1,2}$ be the components of $T(\Gamma_0)$ and $e=\frac{\lambda^2}{\lambda+1}-1$ with $\lambda=2(1-2c)$.
  It is obvious that $e>0$ for small $c$.

  From the expansibility of $T$, we have
   \beq \label{sum-length} \bm(\Gamma_{1,1})+\bm(\Gamma_{1,2})=\bm(T(\Gamma_0))\ge \lambda\bm(\Gamma_0).
   \eeq
   If $\bm(\Gamma_{1,1})\ge (1+e)\bm(\Gamma_0)$ or $\bm(\Gamma_{1,2})\ge (1+e)\bm(\Gamma_0)$, the claim is proved
 by choosing $\Gamma_1$  to be $\Gamma_{1,1}$ or $\Gamma_{1,2}$.

 Thus, in the following, we consider the case that both $\Gamma_{1,1}$ and $\Gamma_{1,2}$ are shorter than $(1+e)\bm(\Gamma_0)$.
 From (\ref{sum-length}) we have both $\Gamma_{1,1}$ and $\Gamma_{1,2}$ are longer than $(\lambda-(1+e))\bm(\Gamma_0)$.

 Let $T^2(\Gamma_0)=\Gamma_{2,1}\cup \Gamma_{2,2}$ with $\Gamma_{2,1}\cap \Gamma_{2,2}$ being a one-point set, where
  $\Gamma_{2,i}=T(\Gamma_{1,i})$ with a slope $(-1)^{i+1}$, $i=1, 2$ (see Figure \ref{fig:3.1}). By a direct computation, it holds that
 $$
 \bm(\Gamma_{2,i})\ge \lambda \bm(\Gamma_{1,i})\ge \lambda (\lambda-(1+e))\bm(\Gamma_0) = (1+e)\bm(\Gamma_0).$$
 Then if $\Gamma_{2,1}$ or $\Gamma_{2,2}$ is in some small square,
we complete the proof of the claim.

\begin{figure}[H]
\centering
\includegraphics[height = 7cm, width = 9 cm]{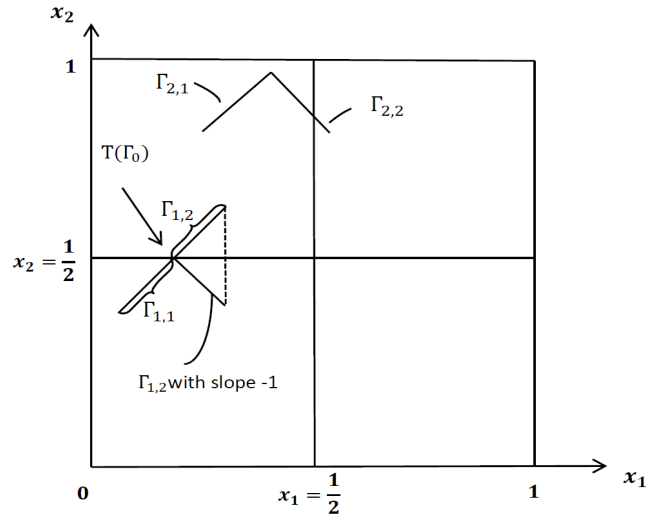}
\caption{Segments of $\Gamma_1=T(\Gamma_0)$ and $\Gamma_2=T^2(\Gamma_0)=\Gamma_{2,1}\cup \Gamma_{2,2}.$}
\label{fig:3.1} 
\end{figure}

 So we assume both $\Gamma_{2,1}$ and $\Gamma_{2,2}$ are not in some small square. In the following, we will prove it is impossible. Recall that the slopes of these two segments are $1$ and $-1$, respectively (see Figure \ref{fig:3.1}). Moreover, it is clear that
the lines where $\Gamma_{2,1}$ and $\Gamma_{2,2}$ lie in are symmetric with respect to a vertical line, which implies that it is impossible that
 both $\Gamma_{2,1}$ and $\Gamma_{2,2}$ have an intersection with $x_1=1/2$ (or $x_2=1/2$).
   This ends the proof of the claim.

 By induction, if $\Gamma_i$ has no intersection with $x_1=x_2$ for $i\ge 0$, we can find a segment $\Gamma_{i+1}\subset T(\Gamma_i)\ {\rm or\ } T^2(\Gamma_i)$ lying in some small square satisfies
 $\bm(\Gamma_{i+1})\ge (1+e)\bm(\Gamma_i)$.

 Since the set $[0, 1]^2$ has a finite diameter, there exists some $i_0\le \left\lfloor-\log_{1+e}\bm(\Gamma_0)\right\rfloor+1$ such that $\Gamma_{i_0}$ has an intersection with $x_1=x_2$ satisfying $\bm(\Gamma_{i_0}\cap G_{\epsilon})\ge \epsilon$. Since $\bm(T^{-1}(\Gamma))\ge 2^{-1}  \bm(\Gamma)$ for any curve $\Gamma$ in some small square and $\bm(\Gamma_0)\leq \frac{\sqrt{2}}{2}$, we have that
 $$
 \frac{\bm(T^{-i_0}(\Gamma_{i_0}\cap G_{\epsilon}))}{\bm(\Gamma_0)}\ge  \sqrt{2}   2^{-i_0} \epsilon \equiv c_2.
 $$
This completes the proof of the proposition.
\hfill\qedbox
%
%
%
%

\begin{Remark}
With a smaller $\delta_1$, the same conclusion holds true for larger $c$, since the frequency for the occurrence of fold (i.e., the segment has nonempty intersection with $x_1=1/2$ or $x_2=1/2$) tends to 0 as $\delta_1\rightarrow 0$. In fact, let $\delta_1\rightarrow 0$, the upper bound for $c$ tends to 1/4.
\end{Remark}
\section{The disordered part}
In this section, we will prove {\rm(}\ref{disorder-part}{\rm)}, the disordered part of the main theorem, which states that for almost every point in the phase space, its orbit, although  will visit any neighborhood of $D_{\mathrm{syn}}$, will also be far away from $D_{\mathrm{syn}}$ for infinitely many times.
It is sufficient to prove that
\begin{Theorem}\label{equal-asynchronization}
 For almost every point $(x_1(0), x_2(0))$ in the phase space, there exists an $n = n(x_1(0),x_2(0))$ such that $(x_1(n),x_2(n))\in B_{\gamma_0}$ with $\gamma_0=2^{-20}$.
 \end{Theorem}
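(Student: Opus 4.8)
The plan is to show that the set of points whose entire forward orbit stays in the "near-diagonal" region $G_{\gamma_0}$ has Lebesgue measure zero; together with invariance of $D_{\mathrm{syn}}$ and the usual Fubini/one-dimensional reduction already used in Section 3, this gives $\limsup \mathrm{dist}(\bx(n),D_{\mathrm{syn}})\ge \gamma_0$ a.e. So let $B=\{(x_1(0),x_2(0))\in[0,1]^2 : (x_1(n),x_2(n))\in G_{\gamma_0}\text{ for all }n\ge 0\}$ and aim to prove $\bm(B)=0$. The key dynamical observation is that close to the diagonal the map $T$ expands transverse distances: writing $u=x_1-x_2$ and $v=x_1+x_2$, for the tent map $f$ one has $f(x_1)-f(x_2)=\pm 2(x_1-x_2)$ on each branch, so the transverse coordinate satisfies $u(n+1)=\pm 2(1-2c)u(n)$ as long as no fold occurs, i.e. as long as neither $x_i$ straddles $1/2$. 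Since $2(1-2c)>1$ for $c<1/4$, the transverse coordinate grows geometrically until a fold happens; a fold forces $x_1$ or $x_2$ near $1$, and one checks (as in Proposition 3.4) that subsequent iterates move the segment away from the lines $x_i=1/2$ for several steps, so folds are rare. Hence a point staying forever in $G_{\gamma_0}$ would need its transverse coordinate to stay bounded by $\gamma_0$ while being multiplied by a factor $>1$ infinitely often with only sparse interruptions — this is the contradiction we want to exploit.

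The concrete execution I would carry out is a "growth-of-components" argument dual to Corollary 2.1, applied to one-dimensional slices transverse to the diagonal. Foliate $[0,1]^2$ by segments $\Gamma$ of slope $-1$ (constant $v$), parametrized by the transverse coordinate $u$. On each such $\Gamma$, restricting to $\Gamma\cap G_{\gamma_0}$ gives a short segment near $D_{\mathrm{syn}}$. First I would show: there is $c_1>0$ and $e'>0$ such that for $0\le c<c_1$, any segment $\Gamma_0\subset G_{\gamma_0}$ of slope $\pm1$ lying in one small square has, after at most $m_0$ iterates of $T$, a component that is either a slope-$\pm1$ segment of length $\ge(1+e')\bm(\Gamma_0)$ still inside $G_{\gamma_0}$, or else already meets $B_{\gamma_0}$. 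This is proved exactly as in the claim inside Proposition 3.5: use $\bm(T(\Gamma_0))\ge 2(1-2c)\bm(\Gamma_0)$, handle folds by passing to $T^2$, and use the symmetry argument showing two opposite-slope sub-segments cannot both fold again. Iterating, a segment that remained in $G_{\gamma_0}$ forever would have length $\to\infty$, impossible in $[0,1]^2$; hence every slope-$\pm1$ segment has a definite proportion $c_2'=c_2'(c)>0$ (via the $T^{-1}$ contraction-by-$\tfrac12$ bound) mapping into $B_{\gamma_0}$ within a bounded number of steps. Then the same inductive "remove a fixed fraction, repeat" scheme used to derive Theorem 3.1 from Lemma 3.2 shows that on each slice the set of points never entering $B_{\gamma_0}$ is null, and Fubini finishes $\bm(B)=0$.

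The main obstacle is the bookkeeping of folds near the corner of the square. When a segment in $G_{\gamma_0}$ folds, its image clusters near $x_1=1$ or $x_2=1$, and one must verify that over the next few iterates (i) the transverse length keeps growing on average despite the fold, and (ii) the segment does not immediately re-enter a fold, so that the net expansion factor over a block of $m_0$ steps exceeds $1$ and the number of components stays controlled — precisely the $(\delta_1,m_0,a)$-good condition. For the standard tent map this is the explicit check "images of $x_i=1/2$ are near $x_i=1$, whose iterates stay away from $x_i=1/2$ for $i\le 6$" already invoked in Proposition 3.4, so $m_0=6$, $a=4$ should again suffice; the delicate point is that we now need this uniformly for segments constrained to the thin strip $G_{\gamma_0}$ rather than the full square, and that $\gamma_0=2^{-20}$ is chosen small enough that such a strip near the diagonal genuinely forces the "only one of the two opposite-slope pieces can fold" dichotomy. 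Once that geometric lemma is in hand, the measure-theoretic conclusion follows by the template already established in Section 3.
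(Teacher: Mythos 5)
Your overall strategy (Fubini reduction to slope-$\pm1$ slices, transverse expansion by $2(1-2c)$ away from folds, folds confined to a small region near the center whose image passes near $(1,1)$, then a uniform-proportion lemma iterated to full measure) is the same circle of ideas as the paper, but the quantitative core of your plan fails, and what is missing is exactly the content of the paper's Proposition 4.2. You conflate growth of the \emph{length} of a slope-$\pm1$ component with growth of its \emph{distance} to $D_{\mathrm{syn}}$. A slope-$+1$ component is parallel to the diagonal: its distance to $D_{\mathrm{syn}}$ is constant along it and is multiplied by exactly $2(1-2c)$ per non-folding step, no matter how long the component is. Take $\Gamma_0$ inside one small square of nearly maximal length $\approx \sqrt2/2$ at distance $d=2^{-k}\gamma_0$ from the diagonal with $k$ huge. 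Within your $m_0$ iterates no component can reach length $(1+e')\bm(\Gamma_0)$ (length is capped by the small square), and no component meets $B_{\gamma_0}$: the slope-$+1$ pieces are at distance $\le 2^{m_0}d\ll\gamma_0$, and the middle pieces cut out by folds are mapped to tiny arcs crossing the diagonal near $(1,1)$, hence still inside $G_{\gamma_0}$. So your dichotomy lemma is false for long segments lying deep inside the strip, and, more importantly, your claim that ``every slope-$\pm1$ segment has a definite proportion mapping into $B_{\gamma_0}$ within a bounded number of steps'' is false: within any fixed $n_0$ steps the only points of such a $\Gamma_0$ that could escape are those passing through the fold region, a fraction $O_{n_0}(d)\to 0$ as $d\to 0$; escape genuinely takes about $\log_{2(1-2c)}(\gamma_0/d)$ steps, unbounded in $d$. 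Consequently the ``bounded time, then pull back by $2^{-1}$ per step'' device of the ordered part (where the initial length is $\ge\delta_1$ and length saturation forces a crossing of $x_1+x_2=1$, hence of the diagonal one step later) has no analogue here: length saturation inside $G_{\gamma_0}$ forces nothing about leaving $G_{\gamma_0}$.

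The repair has to be a measure estimate that is uniform in the arbitrarily small transverse distance $d$ and tolerates an unbounded escape time, which is what Proposition 4.2 provides: it introduces $i$-regular points (orbits avoiding the fold sets $\widetilde{G}_{(2(1-2c))^j d}$ for $j\le i$), observes that regular points have their distance to the diagonal multiplied by exactly $2(1-2c)$ each step, and bounds the total fraction of discarded (non-regular) points over all $j\le N(\Gamma_0)$ by a geometric series dominated by $4\gamma_0/\delta_2\le 1/2$, so at least half of any slope-$1$ segment of length $\ge\delta_2$ reaches $B_{\gamma_0}$ after $N(\Gamma_0)+1$ steps, a time depending on $d$. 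Your single-tracked-component bookkeeping cannot substitute for this: each fold discards siblings, and over unboundedly many steps the retained fraction is not bounded below; moreover the ``remove a fixed fraction, repeat'' scheme forces you to treat arbitrarily short remainder segments, for which only the Iteration Lemma/Corollary 2.1 machinery (the paper's Proposition 4.1, using the sharper fact that inside $G_{\gamma_0}$ the fourth image of a short component has at most two components) yields a uniform retained proportion. So keep your reduction, but split the argument as the paper does: Corollary 2.1 in the strip for short segments, and the regular-point computation, not length growth, for segments of length $\ge\delta_2$.
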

 {\it Proof of {\rm(}\ref{disorder-part}{\rm)} by Theorem \ref{equal-asynchronization}}
\qquad Let $$S_0=\{(x_1(0), x_2(0))\in [0, 1]^2\mid (x_1(i), x_2(i))\in G_{\gamma_0},\  \text{for all}\ i\}.$$ Then
 from Theorem \ref{equal-asynchronization}, we have that $\bm(S_0)=0$. Let $$ S_n = \{ (x_1(0), x_2( 0 ) ) \in [0, 1]^2 \mid (x_1(i), x_2(i)) \in G_{\gamma_0},\  \text{for all}\ i>n \} $$ be the subset of $[0, 1]^2$ such that for each point $\bp $ in it and each $i>n$, $T^i(\bp )$ always stay in $G_{\gamma_0}$. From the definition, we have that
$T^{n+1}(S_n)\subset S_0$. If $\bm(S_n)>0$, then there exists $\widetilde{S}_n\subset S_n$ with $\bm(\widetilde{S}_n)>0$ such that $T^{n+1}: \widetilde{S}_n\rightarrow T^{n+1}(\widetilde{S}_n)\subset S_0$ is a diffeomorphism. It implies
$\bm(S_0)\ge \bm(T^{n+1}(\widetilde{S}_n))>0$. This contradicts  the fact that $\bm(S_0)=0$. Hence
 $\bm(S_n)=0$ for each $n$, which leads to {\rm(}\ref{disorder-part}{\rm)}.\hfill\qedbox

 From Fubini's Theorem, one can easily see that Theorem \ref{equal-asynchronization} can be reduced to the following statement:
 for almost every segment with a slope $\pm1$ in the phase space, almost every point on it will be mapped into $B_{\gamma_0}$ in a finite time. Thus, it is sufficient to prove the following lemma.
\begin{Lemma}\label{asynchronization-segment}
There exists a constant $c_1>0$ such that for any $
0\le c<c_1$, there exists $0\leq c_3\equiv c_3(c)<1$ such that for almost each segment $\Gamma$ with a slope $\pm 1$ in $[0, 1]^2$, there exist its disjoint subsegments $\Gamma_1, \Gamma_2, \cdots$
 satisfying {\rm (i)} for any $i$, there exists $l_i\ge 0$ such that
 $T^{l_i}(\Gamma_i)\subset B_{\gamma_0}$; {\rm (ii)}
 $\bm(\cup_i\Gamma_i)\ge c_3  \bm(\Gamma)$.

\end{Lemma}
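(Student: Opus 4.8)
\textbf{Proof proposal for Lemma \ref{asynchronization-segment}.}
The plan is to iterate the segment $\Gamma$ forward and show that, unless a definite proportion of it has already been dumped into $B_{\gamma_0}$, the part that remains trapped in $G_{\gamma_0}$ must shrink geometrically, which is impossible for a segment of positive length. Concretely, I would work with the family of slope $\pm 1$ segments that lie in a single small square (the iterates of $T$ carry such a segment to a union of such segments, after cutting along $x_i=1/2$), and track at each step the ``good'' subsegments whose forward orbit stays in $G_{\gamma_0}$. The key observation is a contraction-towards-the-diagonal estimate: a slope $\pm1$ segment contained in $G_{\gamma_0}$ is, by the very definition of $G_{\gamma_0}$, within $\gamma_0$ of $D_{\mathrm{syn}}$, so its image under $T$ — recall $T(\bx)=(I+cA)\mathbf f(\bx)$ with $A\mathbf e=0$, so $T$ acts on the difference coordinate $x_1-x_2$ by multiplication by $(1-2c)f'(\cdot)$, i.e. by a factor of modulus at most $2(1-2c)<1$ near the diagonal once $c$ is not too small, OR with an expansion that pushes the image off the diagonal. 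This dichotomy ``either the diagonal-distance grows and the segment is ejected into $B_{\gamma_0}$, or it stays and the synchronization error is controlled'' is the engine of the argument.

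The main steps, in order, would be: (1) Formalize the difference dynamics: writing $u(n)=x_1(n)-x_2(n)$, show $u(n+1)=(1-2c)\big(f(x_1(n))-f(x_2(n))\big)$, so that on a segment avoiding the folding lines $|u(n+1)|$ equals $(1-2c)\nu|u(n)|$ with $\nu=|f'|=2$; in particular $|u(n+1)|=2(1-2c)|u(n)|$. Since $2(1-2c)>1$ for $c<1/4$, a segment staying in $G_{\gamma_0}$ without ever folding would have $|u|$ grow past $\gamma_0$ in $O(\log(1/\gamma_0))$ steps, forcing it into $B_{\gamma_0}$ — contradiction unless it folds. (2) Analyze a fold: when $T^i(\Gamma)$ crosses $x_1=1/2$ or $x_2=1/2$, one of its two image components again has slope $\pm1$; using the same measure/expansion bookkeeping as in Propositions \ref{firststep-L2.2}–\ref{step2-L2.2} (the $E_\pm(c)$ estimates, here $E_+=2$, $E_-=2(1-2c)$ for curves), show that a uniformly positive fraction of length goes to a component that is now \emph{farther} from $D_{\mathrm{syn}}$ — i.e. lands in $B_{\gamma_0}$ — collect these as the $\Gamma_i$'s with the corresponding times $l_i$. (3) Invoke the Iteration Lemma \ref{iteration-L} / Corollary \ref{Corollary} (with the curve version, $m_0$, $a=4$, $\mu$ close to $1$, $D=[0,1]^2$) to guarantee that the ``good'' components do not fragment faster than they expand, so that the measure estimates in step (2) survive the iteration and the trapped remainder has measure $\le c_3\,\bm(\Gamma)$ with $c_3<1$ independent of $\Gamma$. (4) Handle the exceptional segments that coincide with preimages of the folding lines or of the diagonal — these form a measure-zero family, which is why the conclusion is stated for \emph{almost every} segment $\Gamma$; apply Fubini to pass from segments to the square, matching the reduction already spelled out before the lemma.

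The hard part, I expect, will be step (2): making precise and uniform the claim that ``folding ejects a definite fraction into $B_{\gamma_0}$.'' A fold replaces a segment by two pieces, and a priori the diagonal could slice through a fold in a way that leaves both pieces close to $D_{\mathrm{syn}}$; one must use the geometry of how the images of $x_1=1/2$ and $x_2=1/2$ sit relative to $D_{\mathrm{syn}}$ (they are close to $x_1=1$, resp. $x_2=1$, for small $c$, hence far from the diagonal), together with the expansion factor $2(1-2c)$, to show that after at most a bounded number of further iterates a fixed proportion of the folded segment has $|u|\ge\gamma_0$. This is the analogue, for the disordered part, of the ``$(1+e)$-growth until you hit the diagonal'' mechanism used in Proposition \ref{step2-L2.2}, run in reverse: there one grew the segment until it \emph{met} $D_{\mathrm{syn}}$; here one grows the diagonal-distance until it \emph{exceeds} $\gamma_0$. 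Once that quantitative folding estimate is in hand, steps (1), (3), (4) are routine given the Iteration Lemma and the Fubini reduction already set up in the text.
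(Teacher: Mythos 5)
Your overall skeleton --- the difference coordinate $u=x_1-x_2$ with $u(n+1)=(1-2c)\bigl(f(x_1(n))-f(x_2(n))\bigr)$ and expansion $2(1-2c)>1$ off the folding set, the Iteration Lemma \ref{iteration-L} to control fragmentation, and the Fubini/almost-every reduction --- is the same skeleton the paper uses (via Propositions \ref{step1} and \ref{step2}). But your step (2), which you yourself flag as the hard part, is wrong as stated, and it is exactly the step the paper replaces by a different mechanism. A fold at $x_1=1/2$ or $x_2=1/2$ of a slope $+1$ segment at distance $d\ll\gamma_0$ from the diagonal does not send any uniformly positive fraction into $B_{\gamma_0}$: both image components still lie at distance about $2(1-2c)d\ll\gamma_0$ from $D_{\mathrm{syn}}$, and the number of further iterates needed to push that distance past $\gamma_0$ is of order $\log_{2(1-2c)}(\gamma_0/d)$, which is unbounded as $d\to 0$, so no ``fixed proportion after a bounded number of iterates'' claim can hold uniformly. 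Moreover the dichotomy in your preamble (``either the distance grows or the point is pushed off the diagonal'') fails: a point of $G_{\gamma_0}$ in the mixed quadrant $x_1>1/2>x_2$ (i.e.\ in $\widetilde G_{\gamma_0}$, near $(1/2,1/2)$) has $u(n+1)=2(1-2c)(1-x_1-x_2)$ and can be mapped onto, or arbitrarily close to, the diagonal, wiping out all accumulated growth; these points must be discarded, not ``ejected.''

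The paper closes the argument in two stages that your proposal does not reproduce. First (Proposition \ref{step1}) it grows short pieces to a fixed length $\delta_2=2^{-8}\gg\gamma_0=2^{-20}$ via Corollary \ref{Corollary}; what makes the Iteration Lemma applicable inside $G_{\gamma_0}$ is a component-count claim you do not supply: for a short curve in $G_{\gamma_0}$, $T^4$ has at most two components, because refolding near the diagonal requires passing through $\widetilde G_{\gamma_0}$, a tiny neighbourhood of $(1/2,1/2)$ whose forward images stay near $(0,0)$ for several iterates. Second (Proposition \ref{step2}), for a slope $+1$ segment of length $\ge\delta_2$ the paper never extracts a fixed fraction per fold; instead it removes, at step $j$, the points entering $\widetilde G_{(2(1-2c))^j\,\mathrm{d}_{\mathrm{syn}}(\Gamma_0)}$, bounds the total removed proportion by $\sum_{j}(2(1-2c))^j\,\mathrm{d}_{\mathrm{syn}}(\Gamma_0)/\delta_2\le 4\gamma_0/\delta_2\le 1/2$, and lets all remaining (``regular'') points ride the $2(1-2c)$ growth for $N(\Gamma_0)+1$ steps until they reach $B_{\gamma_0}$. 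This cumulative, summable-loss bookkeeping --- which only works because the segment length has first been brought up to the scale $\delta_2\gg\gamma_0$ --- is the missing quantitative content of your step (2); without it (and without the rare-refolding claim inside $G_{\gamma_0}$) your argument does not close.
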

The proof of Lemma \ref{asynchronization-segment} can be divided into two propositions.
 \begin{Proposition}\label{step1} There exists a constant $c_1>0$ such that for any $
0\le c<c_1$, there exist two constants $c_4, \delta_2>0$ with the following properties: for almost every segment $\Gamma$  with a slope $\pm1$ in some small square of $[0,1]^2$, there exist its subsegments $\Gamma_1, \Gamma_2, \cdots$ satisfying {\rm (i)} for any $i$, there exists $l(\Gamma_i)\ge 0$ such that $T^{l(\Gamma_i)}(\Gamma_i)$ is a component of $T^{l(\Gamma_i)}(\Gamma)$ and $T^{l(\Gamma_i)}(\Gamma_i)$ is a segment with a slope $\pm 1$ and a length larger than  $\delta_2$, or $T^j(\Gamma_i)\subset B_{\gamma_0}$ for some $j\le l(\Gamma_i)$; {\rm (ii)}
 $\bm(\cup_i\Gamma_i)\ge c_4  \bm(\Gamma)$.

\end{Proposition}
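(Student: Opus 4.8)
The plan is to prove Proposition~\ref{step1} as the disordered‑part counterpart of Proposition~\ref{firststep-L2.2}: reduce it to the Iteration Lemma~\ref{iteration-L} and Corollary~\ref{Corollary} applied on $D=[0,1]^2$. First I would dispose of the trivial case: if $\bm(\Gamma)>\delta_2$ we simply take $\Gamma_1=\Gamma$, $l(\Gamma_1)=0$, so assume $\bm(\Gamma)\le\delta_2$, where $\delta_2>0$ is a constant to be fixed below, small enough for the folding control of the next step and (for use in the following proposition) larger than $2\gamma_0=2^{-19}$, so that a sufficiently long slope‑$\pm1$ segment must stick out of $G_{\gamma_0}$. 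Fix parameters $m_0$, $a$, $\mu$, $N_0$ essentially as in the proof of Proposition~\ref{firststep-L2.2} (e.g.\ $m_0=6$, $a=4$, $\mu=2$), and pick $c_1>0$ small.

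The key step is to check that \emph{every} segment $\Gamma$ of slope $\pm1$ with $\bm(\Gamma)\le\delta_2$ lying in one of the four small squares is $(\delta_2,m_0,a)$‑good. Here one uses the tent‑map structure of $f$: for $c$ small the image under $T$ of a critical line $x_i=1/2$ lies near $x_i=1$, its next image near $x_i=0$, and then stays near $x_i=0$; hence the forward images $T^j(\{x_1=1/2\})$ and $T^j(\{x_2=1/2\})$, $0\le j<m_0$, remain a definite distance from both critical lines. Consequently a segment of length $\le\delta_2$ can meet these images only a bounded number of times over any $m_0$ consecutive iterates, so $T^{m_0}$ of any short component of $T^i(\Gamma)$ has at most $a$ components — the same ``folding is slow'' observation used after \eqref{sum-length}. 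Since $a<E_-^{m_0}(c)=(2(1-2c))^{m_0}$ for $c<c_1$, Corollary~\ref{Corollary} applies and yields disjoint subsegments $\Gamma_i\subset\Gamma$ with $\bm(\cup_i\Gamma_i)\ge c_4\,\bm(\Gamma)$, where $c_4>0$ is the constant furnished by the Corollary, and for each $i$ an exponent $k(i)$ with $T^{k(i)}(\Gamma_i)$ a component of $T^{k(i)}(\Gamma)$ of length at least the Corollary's threshold $2^{-\mu^{N_0}}\bm(D)$; iterating each such component a further bounded number of steps (it is still short, hence folds slowly) pushes its length above $\delta_2$ at the cost of only a bounded multiplicative factor, which is absorbed into $c_4$. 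Finally, because $T$ is affine on each small square and its Jacobian $A(c)\,\mathrm{diag}(\pm2,\pm2)$ maps the pair of directions $(1,1)$ and $(1,-1)$ to itself (as $A(c)\mathbf e=\mathbf e$ and $A(c)$ has eigenvalue $1-2c$ on $(1,-1)$), each of these components is automatically a segment of slope $\pm1$ in a small square. Thus conclusion~(i) (first alternative) and~(ii) hold with this $c_4$, $\delta_2$.

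Two loose ends remain. The second alternative in~(i) is never an obstruction: if at some intermediate step a component of $T^i(\Gamma)$ already lies entirely in $B_{\gamma_0}$, we simply keep it and stop, which only enlarges $\bm(\cup_i\Gamma_i)$. The phrase ``almost every segment'' serves to discard the Lebesgue‑null set of slope‑$\pm1$ lines some forward iterate of which meets a critical line $x_1=1/2$ or $x_2=1/2$ (where the notion of ``component'' degenerates) — in particular the diagonal itself; for every other line the argument applies verbatim. \textbf{The main obstacle is the uniform slow‑folding estimate of the second paragraph}: one must verify, quantitatively and uniformly in $0\le c<c_1$, that $T^j(\{x_i=1/2\})$ stays a fixed distance from both critical lines for $0\le j<m_0$, so that a length‑$\le\delta_2$ segment undergoes at most a bounded number of folds per $m_0$ iterates; and one must choose $\delta_2,c_1,m_0,a,\mu,N_0$ so that simultaneously this holds, $a<E_-^{m_0}(c)$, and $1<\mu<\bigl(1-\log_{E_+(c)}(E_-(c)a^{-1/m_0})\bigr)^{-1}$ as required by Lemma~\ref{iteration-L}. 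Everything else is a routine adaptation of Section~3.
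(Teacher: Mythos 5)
Your proposal is essentially correct, but it is not the route the paper takes. You reduce Proposition \ref{step1} to the ordered-part mechanism: every short slope-$\pm 1$ segment anywhere in $[0,1]^2$ is $(\delta_2,6,4)$-good (the same slow-folding claim as in Proposition \ref{firststep-L2.2}), Corollary \ref{Corollary} is applied with $D=[0,1]^2$, and the second alternative in (i) is never used. This forces $\delta_2$ to be of the order of $\delta_1=2^{-16}$ (and you should simply choose $\delta_2\le 2^{-\mu^{N_0}}$ rather than rely on the loose ``iterate a few more steps to push the length above $\delta_2$'' remark, since further iteration can fold the component again); such a $\delta_2$ is still compatible with Proposition \ref{step2}, which only needs $\delta_2>2\gamma_0$ and $\gamma_0<\delta_2/8$. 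The paper instead keeps the much larger $\delta_2=2^{-8}$, for which the general-position slow-folding claim is no longer available, and exploits the restriction to $G_{\gamma_0}$: a short segment inside $G_{\gamma_0}$ can meet a critical line only inside $\widetilde{G}_{\gamma_0}$, a set of diameter less than $2^{-19}$ around $(1/2,1/2)$, whose forward images land near $(1,1)$ and then park near $(0,0)$, far from the critical lines; hence $T^4$ of such a segment has at most two components, and any component that leaves $G_{\gamma_0}$ is precisely what the clause $T^j(\Gamma_i)\subset B_{\gamma_0}$ in (i) is for. So the disjunction in (i) is what buys the paper its larger $\delta_2$ and a cleaner folding count (two components per four iterates), while your route buys a shorter proof, at the cost of a smaller $\delta_2$.

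One correction to your key step: it is not true that $T^j(\{x_1=1/2\})$ stays a definite distance from \emph{both} critical lines -- its image lies near $x_1=1$, but its $x_2$-coordinate is unconstrained, so it does cross $x_2=1/2$. The correct (per-coordinate) statement, which is what the argument in Proposition \ref{firststep-L2.2} actually uses, is that after a component folds across $x_i=1/2$ its $x_i$-coordinate stays near $1$, then near $0$, so it cannot re-fold across the \emph{same} line within the next six iterates; it may still fold once across the other line, which is exactly why one needs $a=4$ rather than $a=2$ for $m_0=6$. With that reading, and with the quantitative check you flag (choose $\delta_2$ so small that lengths over six iterates stay far below $1/2$, and $c<c_1$ small so that $a<E_-^{m_0}(c)$ and $\mu=2$ is admissible in Lemma \ref{iteration-L}), your argument goes through.
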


 \begin{Proposition}\label{step2}  \ Let $c_1, \ c,\ \delta_2$ be defined as above. Assume $\Gamma_0$ is a
segment with a slope $\pm1$ longer than $\delta_2$ in some small square of $[0,1]^2$. Then there exist $l\ge 0$ and disjoint subsegments $\widehat{\Gamma}_1, \widehat{\Gamma}_2,\cdots $ of $\Gamma_0$ with $\sum_i\bm(\widehat{\Gamma}_i)\ge \frac{1}{2} \bm(\Gamma_0)$ such that $T^l(\widehat{\Gamma}_i)\subset B_{\gamma_0}$ for each $i$. \end{Proposition}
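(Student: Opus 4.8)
The plan is to use the local expansion of $T$ (in the curve sense, where the relevant expansion constants are $E_+(c)=2$, $E_-(c)=2(1-2c)$) together with the Iteration Lemma and its Corollary to force a definite fraction of $\Gamma_0$ out of the $\gamma_0$-neighborhood of $D_{\mathrm{syn}}$. First I would observe that since $\Gamma_0$ is a segment of slope $\pm 1$ of length at least $\delta_2$ lying in a small square, iterating $T$ expands it; by the same bookkeeping on the number of components used in Proposition 3.1 (short segments split slowly because the images of $x_i=1/2$ under $T^j$ stay away from the switching lines $x_i=1/2$ for a controlled number of steps), $\Gamma_0$ is $(\delta_1,m_0,a)$-good for suitable $m_0,a$ with $a<E_-^{m_0}(c)$ when $c$ is small. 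Applying Corollary 2.1 with $D=[0,1]^2$, I obtain disjoint subsegments whose images under appropriate iterates are components of length at least $2^{-\mu^{N_0}}\bm(D)$, i.e.\ bounded below by a fixed constant, and which together capture at least a fixed fraction $c_1$ of $\bm(\Gamma_0)$.

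The second step is to argue that once a segment of slope $\pm 1$ has length bounded below by that fixed constant and lies in a small square, a definite sub-fraction of it lands in $B_{\gamma_0}$ after a bounded number of further iterates. This is essentially the geometric mechanism already used in the proof of Proposition 3.2 run "in reverse": a long enough segment of slope $\pm1$, after at most one or two applications of $T$, either crosses $x_1+x_2=1$ (hence meets $D_{\mathrm{syn}}$) or produces an even longer segment still of slope $\pm 1$ in a small square; iterating, within $O(1)$ steps we produce a segment long enough that a piece of definite length sits at distance $\ge \gamma_0=2^{-20}$ from $D_{\mathrm{syn}}$, i.e.\ inside $B_{\gamma_0}$. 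Pulling this piece back through the at most $O(1)$ iterations, each of which contracts length by at most a factor $2$, gives a subsegment of definite relative length whose image is in $B_{\gamma_0}$. Composing with the first step and collecting over all the components produced by the Corollary, the total measure of the good subsegments is at least $c_1$ times a fixed constant; since $\gamma_0$ is a fixed small number and $\delta_2$ will be chosen so that the constant from the Corollary exceeds $1/2$ (or, more honestly, by first replacing $\Gamma_0$ by the collection of components and iterating the whole argument finitely many times on the leftover, as in the proof of Theorem 3.1 from Lemma 3.1), we reach the claimed bound $\sum_i \bm(\widehat\Gamma_i)\ge \frac12\bm(\Gamma_0)$, with a common iterate $l$ obtained by taking the maximum of the finitely many bounded exponents involved.

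The main obstacle I anticipate is the uniformity of the number of iterations $l$: the Corollary produces, for each component, its own exponent $k(i)$, and these are a priori unbounded, whereas Proposition 4.5 asks for a single $l$ with $T^l(\widehat\Gamma_i)\subset B_{\gamma_0}$ for all $i$. The fix is that we only need $T^l(\widehat\Gamma_i)\subset B_{\gamma_0}$ for pieces that have already reached a component of definite size — so $k(i)$ is applied only to reach that fixed scale, and the subsequent "escape into $B_{\gamma_0}$" step takes a uniformly bounded number of iterates; but to get a single $l$ one must further note that $B_{\gamma_0}$ is \emph{not} forward-invariant, so one cannot simply pad shorter orbits by extra iterations. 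The honest resolution is to not demand a single $l$ across components but rather to build the statement so that each $\widehat\Gamma_i$ comes with its own $l(\widehat\Gamma_i)$ — re-reading the statement, it does say "there exist $l\ge 0$ and disjoint subsegments", so I would either (a) argue that the escape step can be arranged to happen at a common time by choosing, within each large component, the pullback of a fixed sub-square of $B_{\gamma_0}$ reached in exactly the same number $l_0$ of steps, using that $T$ restricted to a small square is affine with constant Jacobian; or (b) weaken to per-segment times $l_i$ and absorb this into the downstream use of the proposition (Lemma 4.1), which in any case only needs such $l_i$. I expect route (a) to work cleanly for the standard tent map because the map is piecewise affine, so "distance to $D_{\mathrm{syn}}$" transforms predictably and the preimage of a fixed target set in $B_{\gamma_0}$ under a fixed number of branch-specified iterates is itself a union of segments of controlled relative length.
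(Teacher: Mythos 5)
Your plan misses the actual difficulty of this proposition, which is the case of a slope $+1$ segment lying close to, and parallel to, the diagonal. Your second step claims that once a component has length bounded below by a fixed constant, within $O(1)$ further iterates a piece of definite length sits at distance $\ge\gamma_0$ from $D_{\mathrm{syn}}$. This is false for slope $+1$: a segment parallel to the diagonal at distance $d\ll\gamma_0$ contains no point of $B_{\gamma_0}$ no matter how long it is, and the number of iterates needed to expel it is about $\log_{2(1-2c)}(\gamma_0/d)$, which is unbounded as $d\to 0$. Length growth (the Iteration Lemma/Corollary machinery you invoke, which is moreover stated for sets of measure at most $\delta_1\bm(D)$, while here $\bm(\Gamma_0)>\delta_2>\delta_1$) is beside the point: what must grow is the transversal distance to $D_{\mathrm{syn}}$, and the obstruction is that this distance can collapse when an iterate passes through the fold region $\widetilde G$ near $(1/2,1/2)$, where the two branches of the tent map can send nearby $x_1,x_2$ to nearly equal values. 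Controlling the measure of such points over the unboundedly many steps is the entire content of the paper's proof: it introduces $i$-regular points, notes that the distance of regular points to $D_{\mathrm{syn}}$ is multiplied by exactly $2(1-2c)$ at each step, bounds the fraction lost at step $i$ by roughly $(2(1-2c))^i\,\mathrm{d}_{\mathrm{syn}}(\Gamma_0)/\delta_2$ (using that the surviving image has total length $\ge 2^i\delta_2\tilde r_{i-1}$ while the fold region has width of order $(2(1-2c))^i\mathrm{d}_{\mathrm{syn}}(\Gamma_0)$), and sums the resulting geometric series to get a total loss at most $4\gamma_0/\delta_2\le 1/2$. None of this appears in your proposal, so the main case is not proved.

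Two further points. The uniform time $l$ that worries you comes for free in the paper's argument: all $N(\Gamma_0)$-regular points have their distance to the diagonal multiplied by the same factor at every step, so they all first exceed $\gamma_0$ at the common time $l=N(\Gamma_0)+1$; your padding concern (correct in itself, since $B_{\gamma_0}$ is not forward invariant) and your route (a) address a difficulty created by your own decomposition rather than by the proposition. Also, in your dichotomy the event of crossing $x_1+x_2=1$ sends points onto $D_{\mathrm{syn}}$, which is useful only for the ordered part (Proposition 3.2) and is the wrong direction here; in the present proposition the slope $-1$ case is the trivial one (a slope $-1$ segment of length $>\delta_2>2\gamma_0$ already has a definite fraction in $B_{\gamma_0}$ at $l=0$), and all the substance lies in the slope $+1$, near-diagonal case that your argument does not treat.
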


\noindent {\it Proof of Proposition \ref{step1}}\qquad
  Recall that $G_{\gamma}=\{\bx \in [0,1]^2\left| \mathrm{dist} (\bx, D_{\mathrm{syn}})\le \gamma\right.\}$ for $\gamma>0$, where $D_{\mathrm{syn}}=\{\bx\in [0,1]^2 | x_1=x_2\} $. Let $\widetilde{G}_{\gamma}=\{\bx\in G_{\gamma}\left|
\mathrm{dist} (\bx, \{x_1+x_2=1\})\leq \gamma\right.\}$. Define $\delta_2=2^{-8}$.  Obviously, if $T(\bx)$ has multi-preimages with $\bx\in G_{\gamma_0}$, then $\bx\in \widetilde{G}_{\gamma_0}$.

 Let $\Omega$ be a segment in $G_{\gamma_0}\cap\{{\rm \ some\ small\ square \ in\ }  [0,1]^2\}$ with a length smaller than $\delta_2$ such that $T^l(\Omega)\nsubseteq D_{\mathrm{syn}}$ for any $l$. We claim that
\vskip 0.2cm
{\bf Claim.}\quad
 {\it For a segment $\Omega\subset G_{\gamma_0}$ stated above, $T^4(\Omega)$ has at most two components.}
\vskip 0.2cm

Without loss of generality, suppose $T(\Omega)$ has two components. Then we have $\Omega\cap \widetilde{G}_{\gamma_0}\not=\emptyset$. Note that $(1/2, 1/2)\in \widetilde{G}_{\gamma_0}$ and the diameter of $\widetilde{G}_{\gamma_0}$ is less than  $2^{-19}$. From the fact that the spectral radius of $T$ is not larger than 2, we obtain that the length of $T(\Omega)$ is less than $2\bm(\Omega)\le 2\delta_2=2^{-7}$, which implies $T(\Omega)$ is in the $2^{-6}$-neighborhood of $(1/2, 1/2)$.

 Moreover, since $T^i(1/2,1/2)=(0,0){\rm (mod\ 1)}$ for $i\ge 1$, we have that $T^{i+1}(\widetilde{G}_{\gamma_0})$ is in $2^{-2}$-neighborhood of $(0,0)$ for $1\le i\le 4$,  which is far from $\widetilde{G}_{\gamma_0}$. Then we obtain the claim.

\vskip 0.2cm
 Thus, from the claim and similar to the proof of Proposition \ref{firststep-L2.2} for the ordered part, we obtain the existence of a constant $c_4$ and subsegments $\Gamma_i$ of $\Gamma$ with a total measure larger than $c_4  \bm(\Gamma)$ such that for each $\Gamma_i$, there exists $l_i\ge 0$ such that $T^{l_i}(\Gamma_i)$  is a
segment with slope $\pm1$ longer than $\delta_2$ or there exists some $j\le l_i$ such that $T^j(\Gamma_i)\subseteq B_{\gamma_0}$. Since the measure of preimages of $D_{\mathrm{syn}}$ is zero, the conclusion is obtained.
\hfill\qedbox\par

\begin{Remark}
Note that there may be a `triple fold' for $T^i(\Omega)$, i.e., $T^i(\Omega)$ may have intersection points with the lines $x_1=1/2$, $x_2=1/2$ and $x_1+x_2=1$ simultaneously, thus $T^i(\Omega)$ consists of 4 segments. In spite of this, the argument above is still valid.
\end{Remark}

\noindent {\it Proof of Proposition \ref{step2}}
\qquad If the  segment $\Gamma_0$ is of slope $-1$, since $\bm(\Gamma_0)>\delta_2>2\gamma_0$, the proof is trivial. Thus we assume $\Gamma_0$ is of slope $1$. Denote  $\mathrm{d}_{\mathrm{syn}} (\Gamma_0)\equiv \mathrm{dist} (\Gamma_0,D_{\mathrm{syn}})$ and let $N(\Gamma_0)=\left\lfloor\log_{2(1-2c)}\frac{\gamma_0}{\mathrm{d}_{\mathrm{syn}} (\Gamma_0)}\right\rfloor$. We say  $\bx \in \Gamma_0$ is a $i$-regular point if $T^j(\bx)\not\in {\widetilde{G}}_{2^j(1-2c)^j\mathrm{d}_{\mathrm{syn}} (\Gamma_0)}$ for each $j\le i$. For simplicity, we assume that
$\Gamma_0\subset \{\bx\in [0,1]^2 | x_2-x_1\le -\mathrm{d}_{\mathrm{syn}} (\Gamma_0)\}$.

Denote \begin{align*}&\Gamma_{1,m} =T(\Gamma_0)\cap {\widetilde{G}}_{2(1-2c)\mathrm{d}_{\mathrm{syn}} (\Gamma_0)},\\
&\Gamma_{1,l}=T(\Gamma_0)\cap \{\bx \not\in {\widetilde{G}}_{2(1-2c)\mathrm{d}_{\mathrm{syn}} (\Gamma_0)} | x_1,x_2\le 1/2\},\\
&\Gamma_{1,r}=T(\Gamma_0)\cap \{\bx \not\in  {\widetilde{G}}_{2(1-2c)\mathrm{d}_{\mathrm{syn}} (\Gamma_0)} | x_1,x_2\ge 1/2\}.\end{align*}
Obviously, $\Gamma_1=\Gamma_{1,l}\cup\Gamma_{1,r}$ and  $\Gamma_{1,m}$, are the image of $1$-regular and non-$1$-regular points in $\Gamma_0$ under $T$, respectively. Moreover, $\bm(T(\Gamma_0))\ge 2\delta_2$. Denote $\mathrm{d}_{\mathrm{syn}} (\Gamma_1)\equiv \mathrm{dist} (\Gamma_1,D_{\mathrm{syn}})$ which is $2(1-2c)\mathrm{d}_{\mathrm{syn}} (\Gamma_0)$, since $\Gamma_1$ consists of segments with  slopes $1$. Since $\Gamma_{1,m}\subset \widetilde{G}_{\mathrm{d}_{\mathrm{syn}} (\Gamma_1)}$, we have
$$
\bm(\Gamma_{1,m})\le 2(2(1-2c))\mathrm{d}_{\mathrm{syn}} (\Gamma_0).
$$
See Figure \ref{fig:4.1} for details.

\begin{figure}[H]
\centering
\includegraphics[height = 8 cm, width = 10 cm]{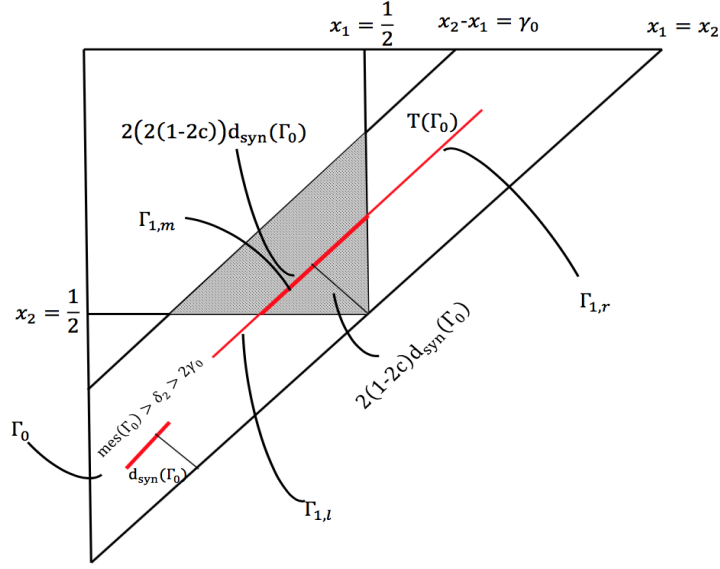}
\caption{$T(\Gamma_0)=(\Gamma_{1,l}\cup \Gamma_{1,r})\cup \Gamma_{1,m}\equiv \Gamma_1\cup\Gamma_{1,m}$ }
\label{fig:4.1} 
\end{figure}
Thus, the ratio $r_1$ of $1$-regular points in $\Gamma_0$ is  larger than $1-\frac{2(2(1-2c))\mathrm{d}_{\mathrm{syn}} (\Gamma_0)}{\bm(T(\Gamma_0))} \geq \tilde{r}_1\equiv1-\frac{2(1-2c)\mathrm{d}_{\mathrm{syn}} (\Gamma_0)}{\delta_2}$, and $\bm(\Gamma_1)\geq r_1  \bm(T(\Gamma_0))\ge 2\delta_2\tilde{r}_1$.

Similarly, denote \begin{align*}&\Gamma_{2,m}=T(\Gamma_1)\cap {\widetilde{G}}_{2(1-2c)\mathrm{d}_{\mathrm{syn}} (\Gamma_1)},\\
&\Gamma_{2,l}=T(\Gamma_1)\cap \{\bx \not\in  {\widetilde{G}}_{2(1-2c)\mathrm{d}_{\mathrm{syn}} (\Gamma_1)} | x_1,x_2\le 1/2\},\\
&\Gamma_{2,r}=T(\Gamma_1)\cap \{\bx \not\in {\widetilde{G}}_{2(1-2c)\mathrm{d}_{\mathrm{syn}} (\Gamma_1)} | x_1,x_2\ge 1/2\}.\end{align*}
Then $\Gamma_2=\Gamma_{2,l}\cup\Gamma_{2,r}$ and $\Gamma_{2,m}$, are the images of $1$-regular and non-$1$-regular points in $\Gamma_1$ under $T$, respectively. It is clear that the distance $\mathrm{d}_{\mathrm{syn}} (\Gamma_2)\equiv\mathrm{dist} (\Gamma_2,D_{\mathrm{syn}})$ is $(2(1-2c))^2\mathrm{d}_{\mathrm{syn}} (\Gamma_0)$.

Note that the image of each component of $\Gamma_1$ has at most three components. Thus $\Gamma_2$ or $\Gamma_{2,m}$ have at most four or two components, respectively. Since $\Gamma_{2,m}\subset \widetilde{G}_{\mathrm{d}_{\mathrm{syn}} (\Gamma_2)}$, it holds that  $\bm(\Gamma_{2,m})\le 2^2(2(1-2c))^2\mathrm{d}_{\mathrm{syn}} (\Gamma_0)$. Obviously, $\bm(T(\Gamma_{1}))=2\bm(\Gamma_1)$.

Thus the ratio of $1$-regular points in $\Gamma_{1}$ is larger than $1-\frac{2^2(2(1-2c))^2\mathrm{d}_{\mathrm{syn}} (\Gamma_0)}{\bm(T(\Gamma_1))}$. It implies that the ratio $r_2$ of $2$-regular points in $\Gamma_0$ is larger than
\begin{align*}&r_1  \left(1-\frac{2^2(2(1-2c))^2\mathrm{d}_{\mathrm{syn}} (\Gamma_0)}{\bm(T(\Gamma_1))}\right)
\ge r_1-r_1  \frac{2^2(2(1-2c))^2\mathrm{d}_{\mathrm{syn}} (\Gamma_0)}{4\delta_2  r_1}\\
\\=&r_1-\frac{(2(1-2c))^2\mathrm{d}_{\mathrm{syn}} (\Gamma_0)}{\delta_2}\geq1-[2(1-2c)+(2(1-2c))^2]\frac{\mathrm{d}_{\mathrm{syn}} (\Gamma_0)}{\delta_2}\equiv\tilde{r}_2\end{align*}
and $$
\bm(\Gamma_2)\ge \bm(T(\Gamma_1))  \tilde{r}_2\geq2^2\delta_2\tilde{r}_2.$$

Inductively, let $\tilde{r}_{i-1}=1-\sum_{j=1}^{i-1}(2(1-2c))^j\frac{\mathrm{d}_{\mathrm{syn}} (\Gamma_0)}{\delta_2}$ and $\Gamma_{i-1}$ is the segments with slopes $1$ of ($i-1$)-regular points in $\Gamma_0$ satisfying $\bm(\Gamma_{i-1})\ge 2^{i-1}\delta_2\tilde{r}_{i-1}  $ and the number of segments in $\Gamma_{i-1}$ is not more than $2^{i-1}$. Moreover, the distance $\mathrm{d}_{\mathrm{syn}} (\Gamma_{i-1})\equiv \mathrm{dist} (\Gamma_{i-1},D_{\mathrm{syn}})$ is $(2(1-2c))^{i-1}\mathrm{d}_{\mathrm{syn}} (\Gamma_0)$.

Then $\bm(T(\Gamma_{i-1}))=2\bm(\Gamma_{i-1})\ge 2^i \delta_2 \tilde{r}_{i-1}  $ and the set of 1-regular and non-1-regular points in $\Gamma_{i-1}$ has at most $2^{i}$ and $2^{i-1}$ components, respectively. Obviously the distance $\mathrm{d}_{\mathrm{syn}} (\Gamma_i)\equiv (2(1-2c))^{i}\mathrm{d}_{\mathrm{syn}} (\Gamma_0)$.

Let $\Gamma_i$ and $\Gamma_{i,m}$ be the image of 1-regular and non-1-regular points in $\Gamma_{i-1}$, respectively. Since $\Gamma_{i,m}\subset \widetilde{G}_{\mathrm{d}_{\mathrm{syn}} (\Gamma_i)},$ we have that the ratio of $1$-regular points in $\Gamma_{i-1}$ is larger than $1-\frac{2^{i}(2(1-2c))^i\mathrm{d}_{\mathrm{syn}} (\Gamma_0)}{2^{i} \delta_2\tilde{r}_{i-1} }$.
It implies the ratio of $i$-regular points in $\Gamma_0$ is larger than
\begin{align*}&r_{i-1}  (1-\frac{(2(1-2c))^i\mathrm{d}_{\mathrm{syn}} (\Gamma_0)}{\delta_2\tilde{r}_{i-1}})
\ge \tilde{r}_{i-1}- \frac{(2(1-2c))^i\mathrm{d}_{\mathrm{syn}} (\Gamma_0)}{\delta_2}\\
=&1-\sum_{j=1}^{i}(2(1-2c))^j\frac{\mathrm{d}_{\mathrm{syn}} (\Gamma_0)}{\delta_2}.\end{align*}
Thus
the ratio of $N(\Gamma_0)$-regular points in $\Gamma_0$ is larger than
$$
r_{_{N(\Gamma_0)}}=1-\sum_{j=1}^{N(\Gamma_0)}(2(1-2c))^j\frac{\mathrm{d}_{\mathrm{syn}} (\Gamma_0)}{\delta_2}.
$$
From the definition of $N(\Gamma_0)$ and the fact that $c$ is small, we have that
\begin{align*}
1-{r}_{_{N(\Gamma_0)}}&\le \frac{2(1-2c)\left(1-(2(1-2c)\right)^{N(\Gamma_0)})}{1-2(1-2c)}  \frac{\mathrm{d}_{\mathrm{syn}} (\Gamma_0)}{\delta_2}\le 4  (2(1-2c))^{N(\Gamma_0)}  \frac{\mathrm{d}_{\mathrm{syn}} (\Gamma_0)}{\delta_2} \\
\\
&\le 4  \frac{\gamma_0}{\mathrm{d}_{\mathrm{syn}} (\Gamma_0)}  \frac{\mathrm{d}_{\mathrm{syn}} (\Gamma_0)}{\delta_2}=\frac{4\gamma_0}{\delta_2}\le 1/2\quad {\rm \ if \ \gamma_0<\frac{\delta_2}{8}.}
\end{align*}
Then we know that $r_{N(\Gamma_0)}>1/2$. Moreover, for each $N(\Gamma_0)$-regular point $\bx $ in
$\Gamma_0$, it is not difficult to see that
$
T^{N(\Gamma_0)+1}(\bx)
$ is in $B_{\gamma_0}$, since the distance between $
T^{N(\Gamma_0)+1}(\bx)
$ and $D_{\mathrm{syn}}$ is larger than $(2(1-2c))^{N(\Gamma_0)+1}  \mathrm{d}_{\mathrm{syn}} (\Gamma_0)\ge \gamma_0$.
\hfill \  \qedbox\par
\par
\par
\par
\par
\par
\par
\par
\par
\par

\section{Piecewise expanding case}
In this section, we will prove Theorem \ref{nonlinear}.
Recall that the proof of Theorem \ref{main-theorem} depends heavily on the piecewise-linearity of $T$. In fact, it implies the property that  the image of a segment in some small square by $T$ is still a segment, by which the proof can be reduced to the simple fact  that a long enough segment in $[0, 1]^2$ has a nonempty intersection with the line $x_1=x_2$. Unfortunately, this property is not valid any more with the existence of nonlinear perturbation and we have to deal with curves rather than segments. For a general smooth simple  curve $\Gamma$ in $[0, 1]^2$, no matter how long it is, it may occur that
$\Gamma\cap G_{\epsilon}=\emptyset$. To prove Theorem \ref{nonlinear}, we need to exclude the possibility for this troublesome situation. More precisely, we will show that components of a short segment consist of `very flat'  simple curves until their length  are of constant scale (see case (1iii), (2iii) or (kiii) in the proof of Lemma \ref{flat}). Then everything valid to segments sated above will be also valid to `very flat'  simple curves in a similar way.

For this purpose, we need to introduce some quantity to measure how flat a  simple curve is. For a point $\bp  $ in a  simple curve $\Gamma$ where the tangent line can be defined, we denote the unit tangent vector of $\Gamma$ at $\bp $ by $\mathbf{t}_{_{\Gamma}}(\bp )\in \RR^2$ coinciding with an orientation of the curve. Then we define the range of angles on $\Gamma$ to be $\mathrm{r_a}(\Gamma)\equiv\sup_{\bp _1,\bp _2\in\Gamma} \| \mathbf{t}_{_{\Gamma}}(\bp _2)-\mathbf{t}_{_{\Gamma}}(\bp _1) \| $.
\begin{Remark}\label{almost-everywhere}
Let $[0, 1]^2=\cup \Gamma_a$ is a union of segments, where $\Gamma_a$ is a segment with $a$ in some interval $I$. Let $D_{\infty}=\cup_{l=0}^{\infty}T^{-l}(D_{\mathrm{syn}})$. Then the measure of $D_{\infty}$ in $[0, 1]^2$ is zero. Hence from Fubini's theorem, we have that for almost all $a$ in $I$, $T^l(\Gamma_a)\cap D_{\infty}$ is a set of measure zero in $T^l(\Gamma_a)$ for any $l$ and in particular, $T^{-l}(T^l(\Gamma_a)\cap D_{\infty})\cap \Gamma_a=\Gamma_a\cap T^{-l}( D_{\infty})$ does not include any open interval. For this reason, each  simple curve considered in this section satisfies that the intersection set of it with $D_{\infty}$ is of measure zero.
\end{Remark}

For any $\bp \in [0, 1]^2$, let $JT(\bp )$ be the Jacobian matrix of $T$ at $\bp $. Then $JT$ is piecewise $C^1$ on $\bp $. The ordered part in Theorem \ref{nonlinear} can be reduced to the following three lemmas.

\begin{Lemma}\label{flat}
There exist constants $c_1,\eta>0$ such that if $0\le c<c_1,\ \|g\|_{C^2}\le\eta$, there exist constants $c_5>0$ and $0<\hat{a}=O(\eta)$ such that if $\Gamma_0$ is a segment in one of small squares in $[0,1]^2$ with a slope $\pm 1$ and a length less than $\delta_1$, we can find a collection of sub-curves denoted by $\bar{\Gamma}_i$ satisfying
{\rm (i)} $\sum_{i>0} \bm(\bar{\Gamma}_i)\ge c_5\bm(\Gamma_0)$; {\rm (ii)}
for any $i>0$, there exists $l_i$ such that

{\rm (a)} $T^{l_i}(\bar{\Gamma}_i)$ is a component of $T^{l_i}(\Gamma_0)$ and $\bm(T^{l_i}(\bar{\Gamma}_i))\ge \delta_1$;

 {\rm (b)} $\mathrm{r_a}(T^{l_i}(\bar{\Gamma}_i))\le \hat{a}\bm(T^{l_i}(\bar{\Gamma}_i))$.
\end{Lemma}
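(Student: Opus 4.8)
\textbf{Proof proposal for Lemma \ref{flat}.}

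The plan is to run essentially the same argument as in the proof of Lemma \ref{L2.2} (ordered part), but tracking a second quantity --- the range of angles $\mathrm{r_a}$ --- alongside the length, and showing that the nonlinear perturbation $g$ can only damage flatness by a multiplicative factor controlled by $O(\eta)$ per iterate. First I would set up the local picture: in each small square $D_J$ the map $T$ is a diffeomorphism of the form $A(c)\,\mathbf f$, whose Jacobian is $A(c)$ times $\mathrm{diag}(f'(x_1),f'(x_2))$, and the derivative of this Jacobian is controlled by $\|g\|_{C^2}\le\eta$ (the unperturbed part $f_0$ is piecewise linear, so all the curvature comes from $g$). The key infinitesimal estimate is: if $\Gamma$ is a $C^1$ curve inside a small square with unit tangent field $\mathbf t_{_\Gamma}$, then the image $T(\Gamma)$ has
\[
\mathrm{r_a}(T(\Gamma))\;\le\;\Lambda\cdot\mathrm{r_a}(\Gamma)\;+\;C\eta\,\bm(T(\Gamma)),
\]
where $\Lambda$ is a bound for the ``angular distortion'' of the linear-ish map $A(c)\,\mathrm{diag}(f_0',f_0')$ (close to $1$ for the relevant slope combinations when $c$ is small, since $A(c)$ is close to $I$ and $f_0'=\pm s$), and the additive term comes from integrating $\|JT'\|=O(\eta)$ along the image curve. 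Iterating this bound $j$ times along a component that has not yet split gives
\[
\mathrm{r_a}(T^j(\Gamma_0))\;\le\;\Lambda^j\,\mathrm{r_a}(\Gamma_0)\;+\;C\eta\sum_{i=1}^{j}\Lambda^{\,i-1}\bm(T^j(\Gamma_0)),
\]
and since $\Gamma_0$ is a straight segment, $\mathrm{r_a}(\Gamma_0)=0$; the geometric sum is absorbed into a constant because the lengths $\bm(T^i(\Gamma_0))$ grow at least like $E_-(c)^i>1$, so the series is dominated by its last term. Choosing $\hat a = O(\eta)$ to be this accumulated constant yields the desired $\mathrm{r_a}(T^{l_i}(\bar\Gamma_i))\le\hat a\,\bm(T^{l_i}(\bar\Gamma_i))$, provided we stop at the first time $l_i$ a component reaches length $\ge\delta_1$.

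Next I would use the Iteration Lemma \ref{iteration-L} and Corollary \ref{Corollary} exactly as in Proposition \ref{firststep-L2.2}: with $E_\pm(c)$ the curve-versions (close to $2$ and $2(1-2c)$ up to an $O(\eta+s_0)$ error), and with $m_0, a$ chosen so that a short enough curve (length $\le\delta_1$) picks up at most $a$ components after $m_0$ steps --- here the same ``folding is rare for small $c$'' reasoning applies, because the preimages of $\{x_1=1/2\}$ and $\{x_2=1/2\}$ under $T^i$ stay far from those lines for a bounded number of steps when $c,\eta,s_0$ are small, and a curve of range of angles $\le\hat a$ and length $\le\delta_1$ is contained in a thin tube around a segment, so it behaves like a segment for the purpose of counting folds. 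This is why the flatness bound and the component-counting bound must be proved \emph{simultaneously} by induction on the iteration step: I would carry an inductive hypothesis of the form ``every component $\Gamma'$ of $T^i(\Gamma_0)$ with $\bm(\Gamma')\le\delta_1$ satisfies $\mathrm{r_a}(\Gamma')\le\hat a\,\bm(\Gamma')$ and $T^{m_0}(\Gamma')$ has $\le a$ components'', which is precisely the $(\delta_1,m_0,a)$-good property plus flatness. The cases (1iii),(2iii),(kiii) alluded to in the statement are the branches of this induction where a component is still short and still flat, so the process continues; Corollary \ref{Corollary} then harvests, for a definite fraction $c_5$ of $\bm(\Gamma_0)$, subcurves $\bar\Gamma_i$ whose image under some $T^{l_i}$ is a component of length $\ge\delta_1$, and the accumulated flatness estimate above gives property (b) for those.

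The main obstacle I anticipate is closing the induction on flatness in the presence of folds: when a component splits across $x_1=1/2$ or $x_2=1/2$, each sub-component inherits a piece of the parent's range of angles \emph{plus} whatever extra bending the next application of $T$ introduces, and one must check that restarting the ``$\mathrm{r_a}\le\hat a\,\bm$'' bookkeeping on the shorter children does not blow up the constant $\hat a$ over infinitely many potential splits. The resolution is that each child has length at least a fixed fraction of $\delta_1$ divided by $E_+(c)$ (a short curve cannot fold into arbitrarily many pieces, by the $(\delta_1,m_0,a)$-good property), so only boundedly many splits occur before a component either exceeds length $\delta_1$ (and we stop) or the argument restarts from a curve that is still $O(\eta)$-flat; choosing $c_1,\eta,s_{0+}$ small makes $\Lambda^{m_0}$ close enough to $1$ that the per-block amplification $\Lambda^{m_0}<$ (some fixed bound) keeps $\hat a = O(\eta)$ uniformly. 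A secondary, more routine obstacle is verifying that $T^{-1}$ on a small square contracts lengths by at most a factor $E_-(c)^{-1}<1$ and expands the range of angles by at most $O(\eta)\cdot$length, which is needed so that pulling the harvested subcurves back to $\Gamma_0$ preserves both the measure lower bound (giving $c_5$) and does not matter for (b) since (b) is stated about the \emph{forward} image; these follow from the same Jacobian estimates as above.
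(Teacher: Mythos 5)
Your core mechanism for part (b) is the same as the paper's: the one--step estimate $\mathrm{r_a}(T(\Gamma))\le (1+O(c,\eta))\,\mathrm{r_a}(\Gamma)+O(\eta)\,\bm(\Gamma)$ (all curvature coming from $g$), combined with the fact that lengths grow by $\lambda-O(\eta)=2(1-2c)-O(\eta)$, which strictly exceeds the angular amplification factor, so that starting from $\mathrm{r_a}(\Gamma_0)=0$ the accumulated bending is a convergent geometric series relative to the current length and $\hat a=O(\eta)$ is uniform; and the harvest of subcurves with $\bm(T^{l_i}(\bar\Gamma_i))\ge\delta_1$ via the Iteration Lemma \ref{iteration-L} and Corollary \ref{Corollary}, exactly as in Proposition \ref{firststep-L2.2}. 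This matches the architecture of the paper's proof (its cases (ki), (kii)).

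The genuine gap is that you never treat the case in which a component meets a critical line $\{x_1=1/2\}$ or $\{x_2=1/2\}$ in \emph{more than one} point, and your heuristic that a short curve with $\mathrm{r_a}\le\hat a$ ``behaves like a segment for the purpose of counting folds'' is only valid when its direction is transversal to the critical lines. Unlike the tent--map case, the direction of the iterated curves drifts and can become arbitrary (the paper itself notes later that the ``average slope'' can be arbitrary), so a flat component can become nearly parallel to, say, $x_2=1/2$ and cross it twice or more, producing three or more pieces in one step; at that point neither your $(\delta_1,m_0,a)$-good bookkeeping nor your flatness induction is justified. The paper's cases (1iii)/(2iii)/(kiii) exist precisely for this: it replaces the folded image by an equivalent simple curve $\Gamma_k$ in one small square with $T(\Gamma_k)=T^2(\Gamma_{k-1})$ (unfolding across the critical lines), pays a factor $2$ in the angle recursion (\ref{near-flat}), and shows such events are rare --- after one, the image lies near the boundary, far from the critical lines for several iterates --- so the total amplification is controlled by the condition $2(g/\lambda)^6<1$ and $\hat a$ remains $O(\eta)$. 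Some device of this kind is needed to close your induction. Separately, in your resolution of the ``main obstacle'' the claim that each child of a split has length at least a fixed fraction of $\delta_1/E_+(c)$ is false (the crossing point can lie arbitrarily close to an endpoint, so a child can be arbitrarily short); the correct and simpler accounting is along the chain of subarcs $\gamma_i\subset T^i(\Gamma_0)$ mapping onto a given component, using $\bm(\gamma_i)\le(\lambda-O(\eta))^{-(j-i)}\bm(\gamma_j)$, which makes that lower bound unnecessary --- but as written your uniformity argument for $\hat a$ rests on a wrong step.
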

\begin{proof}
Since $T$ is a small perturbation of a (piecewise) linear map
  satisfying $$JT(\bp)=\begin{bmatrix}  1-c+\eta_1 &c+\eta_2\\ c+\eta_3 & 1-c+\eta_4\end{bmatrix}$$ with $| \eta_i|\leq \eta$, for $i=1,\cdots,4$, it is easily seen that $|| JT(\bp) || \leq 1+O(c,\eta)$.

\begin{figure}[H]
\centering
\includegraphics[height = 8 cm, width = 11 cm]{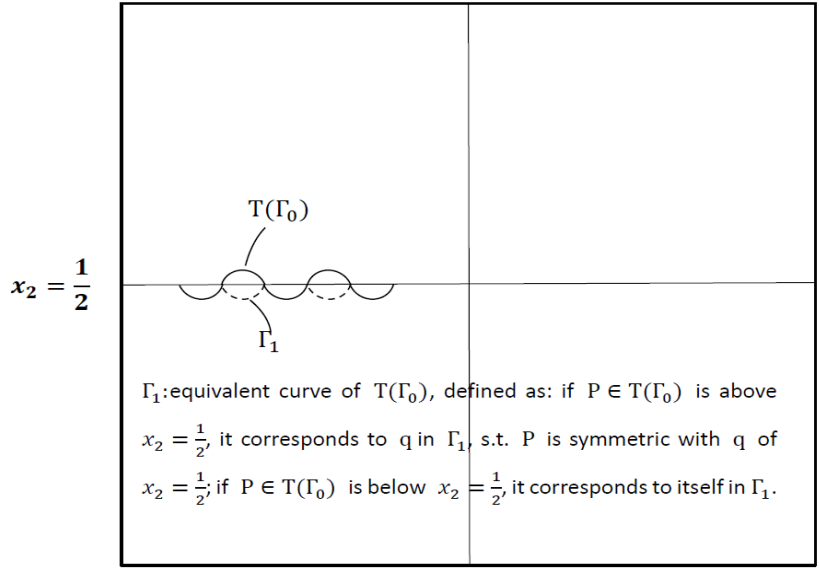}
\caption{Equivalent curve $\Gamma_1$ of $\Gamma_0$.}
\label{fig:5.2} 
\end{figure}

Let $\Gamma_0$ be a (short) segment as above. Then $\mathrm{r_a}(\Gamma_0)=0$. With the condition $\|g\|_{C^2}$ sufficiently small, it is obvious that if both $x_1=1/2$ and $x_2=1/2$ have intersections with $T(\Gamma_0)$, then the lemma is immediately proved. Hence, without loss of generality, we assume that
$T(\Gamma_0)$ has no intersection with $x_1=1/2$.

 There are three different cases according to the intersection between $T(\Gamma_0)$ and $x_2=1/2$.

{\bf Case (1i)}\ There is no intersection between $T(\Gamma_0)$ and $x_2=1/2$. Denote $\Gamma_1=T(\Gamma_0)$. Since $T$ is a small perturbation of a (piecewise) linear map and $\bm(\Gamma_0)$ is small, we have that $\mathrm{r_a}(\Gamma_1)\le a  \bm(\Gamma_0)$ with $0<a<1$, which is small if $\eta$ is small. In fact, since $\Gamma_0$ is a segment, it holds that $\mathbf{t}_{_{\Gamma_0}}(\bp )$ is constant. Then we have
$$
\|JT(\bp _2)-JT(\bp _1)\|\le \max_{p\in \Gamma_0}\|D(JT)(p)\|  \|\bp _2-\bp _1\|\le a  \|\bp _2-\bp _1\|,
$$
where $D(JT)$ is the Jacobian matrix of $JT$ with respect to $\bp $
and $a=O(\eta)\ll 1$ if $\eta\ll 1$.

Subsequently, because of $\mathbf{t}_{_{\Gamma_0}}(\bp _1)=\mathbf{t}_{_{\Gamma_0}}(\bp _2)$, we obtain that
\begin{align*}
&\|\mathbf{t}_{_{\Gamma_1}}(T(\bp _2))-\mathbf{t}_{_{\Gamma_1}}(T(\bp _1))\|=\| JT(\bp _2)\mathbf{t}_{_{\Gamma_0}}(\bp _2)-JT(\bp _1)\mathbf{t}_{_{\Gamma_0}}(\bp _1) \|
\\ \le& \|JT(\bp _2)-JT(\bp _1)\|\le a  \|\bp _2-\bp _1\|
\end{align*}
with $a=O(\eta)$ small.

{\bf Case (1ii)}\ There is exactly one intersection point between $T(\Gamma_0)$ and $x_2=1/2$. Denote two parts of $T(\Gamma_0)$ divided by the intersection point by $\Gamma_{1,1}$ and $\Gamma_{1,2}$, respectively. Obviously, it holds that
$\mathrm{r_a}(\Gamma_{1,i})\le a  \bm(\Gamma_{1,i}),\ i=1,\ 2$ with the same $a$ as in case (1i). Let $\Gamma_1$ be any one of these two  simple curves, say, $\Gamma_{1,1}$.

 {\bf Case (1iii)}\ There are more than one intersection points. Same as in case (1i), we have $\mathrm{r_a}(T(\Gamma_0))\le  a \bm(\Gamma_0)$. Although $T(\Gamma_0)$ has 3 or more components, in the following iterations, it can be replaced by a simple curve in some small square with a small range of angle as follows (note that we have to consider different components separately only if the range of the curve is not small).  Define
  $\Gamma_1$ be a  piecewise-smooth simple curve in some small square satisfying that
  $T(\Gamma_1)=T^2(\Gamma_0)$ (see Figure \ref{fig:5.2}). Since there exist at least two intersection points between $T(\Gamma_0)$ and $x_2=1/2$, there exist $\bp_1, \bp_2\in T(\Gamma_0)$ such that $\mathbf{t}_{T(\Gamma_0)}(\bp_1)$ and $\mathbf{t}_{T(\Gamma_0)}(\bp_2)$ lies in the upper and lower half planes, respectively.  Thus for any point $\bp $ in $T(\Gamma_0)$, it holds that
  $ \| \mathbf{t}_{T(\Gamma_0)}(\bp )\|\le \mathrm{r_a}(T(\Gamma_0)) \leq  a  \bm(\Gamma_0)$. Then for any  points $\bp _1, \bp _2$ in $\Gamma_1$, we have
  $ \| \mathbf{t}_{_{\Gamma_1}}(\bp _2)-\mathbf{t}_{_{\Gamma_1}}(\bp _1)\le  \| \mathbf{t}_{\Gamma_1}(\bp _2)\| +\| \mathbf{t}_{\Gamma_1}(\bp _1) \| \le  2a  \bm(\Gamma_0)$. Therefore $\mathrm{r_a}(\Gamma_1)\le 2a\bm(\Gamma_0)$.


 Next we consider three different cases for $T(\Gamma_1)$ according to the intersection between $T(\Gamma_1)$ and $x_2=1/2$.

{\bf  Case (2i)}\ There is no intersection. Denote $\Gamma_2=T(\Gamma_1)$.
  For $\bm(\Gamma_0)$ small, and for any points $\bp_1,\bp_2$ in $\Gamma_1$, we then have
 \begin{align*}
 & \| \mathbf{t}_{_{\Gamma_2}}(T(\bp _2))-\mathbf{t}_{_{\Gamma_2}}(T(\bp _1)) \| =||JT(\bp _2)\mathbf{t}_{_{\Gamma_1}}(\bp _2)-JT(\bp _1)\mathbf{t}_{_{\Gamma_1}}(\bp _1)||\\
 \le &||JT(\bp _2)\mathbf{t}_{_{\Gamma_1}}(\bp _1)-JT(\bp _1)\mathbf{t}_{_{\Gamma_1}}(\bp _1)||+||JT(\bp _2)\mathbf{t}_{_{\Gamma_1}}(\bp _2)-JT(\bp _2)\mathbf{t}_{_{\Gamma_1}}(\bp _1)||\\
\le  &a  \bm(\Gamma_1)+(1+O(c,\eta)) \| \mathbf{t}_{_{\Gamma_1}}(\bp _2)-\mathbf{t}_{_{\Gamma_1}}(\bp _1) \| \\
\le  &a  \bm(\Gamma_1)+(1+O(c,\eta))\mathrm{r_a}(\Gamma_1).
 \end{align*}
Therefore, it holds that $\mathrm{r_a}(\Gamma_2)\leq a \bm(\Gamma_1)+(1+O(c,\eta))\mathrm{r_a}(\Gamma_1)$.

{\bf Case (2ii)}\ There is exactly one intersection point. Assume $T(\Gamma_1)=\Gamma_{2,1}\cup\Gamma_{2,2}$ and $\Gamma_{2,1}\cap\Gamma_{2,2}$ be the intersection point. For each $\Gamma_{2,i}$, we have a similar estimate for $\mathrm{r_a}(\Gamma_{2,i})$ as in case(2i). We will denote any component of it, say $\Gamma_{2,1}$, by $\Gamma_2$.

 {\bf Case (2iii)}\ There are more than one intersection points. Same as in case (2i), we have $\mathrm{r_a}(T(\Gamma_1))\le  a  \bm(\Gamma_1)+(1+O(c,\eta))\mathrm{r_a}(\Gamma_1)$.  Since in the  simple curve there are at least two points on $x_2=1/2$,
 there exist $\bp_1, \bp_2\in T(\Gamma_1)$ such that $\mathbf{t}_{T(\Gamma_1)}(\bp_1)$ and $\mathbf{t}_{T(\Gamma_1)}(\bp_2)$ lies in the upper and lower half planes, respectively.  Thus for any point $\bp $ in $T(\Gamma_1)$, it holds that
  $ \| \mathbf{t}_{T(\Gamma_1)}(\bp )\|\le \mathrm{r_a}(T(\Gamma_1)) \leq  a  \bm(\Gamma_1)+(1+O(c,\eta))\mathrm{r_a}(\Gamma_1)$.
 Let $\Gamma_2$ be a  piecewise-smooth simple curve in some small square satisfying $T(\Gamma_2)=T^2(\Gamma_1)$. Thus for any  points $\bp _1, \bp _2$ in $\Gamma_2$, we have $$ \begin{array}{ll}\| \mathbf{t}_{_{\Gamma_2}}(\bp _2)-\mathbf{t}_{_{\Gamma_2}}(\bp _1) \|
  &\le  \| \mathbf{t}_{\Gamma_2}(\bp _2) \| + \|  \mathbf{t}_{\Gamma_2}(\bp _1)  \|\\
  \\
  & \le  2a  \bm(\Gamma_1)+2(1+O(c,\eta))\mathrm{r_a}(\Gamma_1).\end{array}$$

Note that $\lambda=2(1-2c)$. By induction, we have that for any $k$, in case (ki) and (kii) it holds that \begin{align} \label{ki}
\mathrm{r_a}(\Gamma_k)&\le a  \bm(\Gamma_{k-1})+(1+O(c,\eta))\mathrm{r_a}(\Gamma_{k-1}) \nn \\
&\le a\lambda^{-1}\bm(\Gamma_k)+(1+O(c,\eta))\mathrm{r_a}(\Gamma_{k-1})\quad {\rm (\ since\ \bm(\Gamma_k)\ge \lambda \bm(\Gamma_{k-1}))},
\end{align}
 and in case (kiii) we have that
\begin{align} \label{near-flat}\mathrm{r_a}(\Gamma_k)\le 2a  \bm(\Gamma_{k-1})+2(1+O(c,\eta))\mathrm{r_a}(\Gamma_{k-1})\le 2a  \lambda^{-1}   \bm(\Gamma_{k})+2(1+O(c,\eta))\mathrm{r_a}(\Gamma_{k-1}).\end{align}
 On the other hand, the frequency for the occurrence of case (kiii) is very low. In fact, in case (kiii), (\ref{near-flat}) together with the fact that $\Gamma_k$ is short imply that the  simple curve $\Gamma_k$  `nearly' coincides with the line $x_1=1/2$, so it is mapped into a  simple curve `nearly' coincides with the line $x_1=1$ by $T$.
 Thus, we may assume that for $j=6l+6$, (jiii) occurs and thus $\mathrm{r_a}(\Gamma_j)$ should be estimated by (\ref{near-flat}), while for $j=6l+1,\cdots,6l+5 $, (jiii) will not occur and $\mathrm{r_a}(\Gamma_j)$ should be estimated by
 (\ref{ki}). Hereafter, we let $g\equiv 1+O(c,\eta)$.  Since $\bm(\Gamma_{6l+i})\leq \lambda^{-(6-i)}\bm(\Gamma_{6l+6})$ and $\mathrm{r_a}(\Gamma_0)=0$, we have
 \begin{align}\label{ki-ii}
&\mathrm{r_a}(\Gamma_{6(l+1)}) \leq 2a\lambda^{-1}\bm(\Gamma_{6(l+1)})+2gr_a
(\Gamma_{6l+5})\leq\cdots \nonumber\\
\leq &2a\lambda^{-1}\bm(\Gamma_{6(l+1)})+2ga\lambda^{-1}\bm
(\Gamma_{6l+5})+2g^2a\lambda^{-1}\bm
(\Gamma_{6l+4})+2g^3a\lambda^{-1}\bm
(\Gamma_{6l+3})\nonumber\\
&+2g^4a\lambda^{-1}\bm
(\Gamma_{6l+2})+2g^5r_a(\Gamma_{6l})\le \cdots\nonumber\\
\leq &2a\lambda^{-1}\left(  1+\frac{g}{\lambda}+\cdots+(\frac{g}{\lambda})^5\right)\left( 1+2(\frac{g}{\lambda})^6+(2(\frac{g}{\lambda})^6)^2+\cdots+(2(\frac{g}{\lambda})^6)^l\right)\bm(\Gamma_{6(l+1)})\nonumber\\
&+(2g^6)^l \mathrm{r_a}(\Gamma_0)\nonumber\\
=&2a\lambda^{-1}\left( \sum_{i=0}^5(\frac{g}{\lambda})^i\right) \sum_{i=0}^l(2(\frac{g}{\lambda})^6)^i \bm(\Gamma_{6(l+1)}).
 \end{align}
 For $k=6l+j,\ j=1,\cdots,5$, let $b_i=(2(\frac{g}{\lambda})^6)^i (\frac{g}{\lambda})^j$. Then from (\ref{ki-ii}), if $2(\frac{g}{\lambda})^6<1$, we have $\mathrm{r_a}(\Gamma_k)\leq \hat{a}\bm(\Gamma_k)$, where $\hat{a}=2a\lambda^{-1}\sum_{i=0}^5 (\frac{g}{\lambda})^i \beta$ with $\beta=\sum_{i=0}^\infty b_i<\infty$.

     With these estimates, we conclude that estimates for current situation is totally similar to the one in the proof for the tent map. In particular, Corollary \ref{Corollary} is available and thus we can obtain the lemma.
 \hfill   \qedbox
\end{proof}

In the following we will give the proof for the case $f_0(x)=1-s|x-1/2|$,  $s=2-s_0>0$ with $s_0>0$.
For the case $s_0=0$, the proof can be obtained in a similar (in fact simpler) way.

 The following lemma can make the argument simpler.
 \begin{Lemma}\label{equivalent-ds}
Assume $0<c, \eta\ll s_0\ll 1$. For almost every point $\bp $ in $[0,1]^2$, there exists a $i(\bp )\in\NN$ such that for all $i\geq i(\bp )$, $x_1(T^i(\bp )),x_2(T^i(\bp ))\in[
\tau_2,1-\tau_1]$, where $\tau_1=\frac{s_0}{2}-\eta>0$ and $\tau_2=(1-c)(s-\eta)(\frac{s_0}{2}-\eta)>0$.
\end{Lemma}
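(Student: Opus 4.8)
The plan is to obtain the two‑sided bound in three moves: a deterministic lower cut‑off, a self‑improvement of that cut‑off, and finally the upper cut‑off, which is where the ``almost every'' hypothesis is genuinely needed and which is the real obstacle. For the lower cut‑off: since $f_0(x)=1-s|x-1/2|\ge s_0/2$ on $[0,1]$ (with equality only at $x=0,1$) and $\|g\|_{C^0}\le\eta$, we have $f(x)\ge s_0/2-\eta=\tau_1$ for every $x\in[0,1]$; as each $x_j(n)$ is a convex combination of $f(x_1(n-1))$ and $f(x_2(n-1))$, this already gives $x_1(n),x_2(n)\ge\tau_1$ for all $n\ge1$ and every $\bp$. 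Moreover $\min_{[\tau_1,1]}f_0=f_0(1)=s_0/2$, so $f([\tau_1,1])\subseteq[\tau_1,1]$; hence $[\tau_1,1]^2$ is forward invariant under $T$ and $x_1(n),x_2(n)\in[\tau_1,1]$ for all $n\ge1$.

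Next, the lower cut‑off improves to $\tau_2$ as soon as the orbit sits in $[\tau_1,1-\tau_1]^2$: for $x\in[\tau_1,1-\tau_1]$ one has $|x-1/2|\le 1/2-\tau_1$, so $f_0(x)\ge f_0(1-\tau_1)=s_0/2+s\tau_1$ and thus $f(x)\ge s_0/2+s\tau_1-\eta=(1+s)\tau_1\ge(s-\eta)\tau_1$; dropping the nonnegative off‑diagonal term, $x_j(n+1)\ge(1-c)f(x_j(n))\ge(1-c)(s-\eta)\tau_1=\tau_2$ for $j=1,2$ whenever $x_1(n),x_2(n)\in[\tau_1,1-\tau_1]$. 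Consequently it suffices to prove that, for a.e.\ $\bp$, the orbit is eventually contained in $[\tau_1,1-\tau_1]^2$, i.e.\ $x_1(n),x_2(n)\le1-\tau_1$ for all large $n$.

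For the upper cut‑off, using $f(x)\le 1-s|x-1/2|+\eta$ a short computation shows that $x_j(n+1)>1-\tau_1$ forces $(1-c)|x_j(n)-1/2|+c|x_{3-j}(n)-1/2|<s_0/(2s)$, hence $|x_j(n)-1/2|<\rho_0:=s_0/\big(2s(1-c)\big)$. Thus the orbit can leave $[\tau_1,1-\tau_1]^2$ at step $n+1$ only when $T^n(\bp)$ lies in the fold neighborhood $E:=\{|x_1-1/2|<\rho_0\}\cup\{|x_2-1/2|<\rho_0\}$, and the whole lemma reduces to showing that, for a.e.\ $\bp$, the orbit meets $E$ only finitely often. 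A visit to $E$ triggers a short forced excursion --- one coordinate is thrown within $O(s_0)$ of $1$ at the next step and within $O(s_0)$ of $s_0/2$ the step after, the other coordinate being left free --- after which the orbit re‑enters the bulk of $[\tau_1,1]^2$. The difficulty, which I expect to be the crux of the proof, is that $E$ has positive Lebesgue measure (of order $s_0$), so a naive Borel--Cantelli for a fixed target fails; one must control the return times to $E$ along a typical orbit and prove they are almost surely summable, which I would do by combining the geometry of the above excursion with the exponential decay of correlations / spectral gap for the transfer operator of a weakly coupled expanding lattice from \cite{Keller1,Keller-Liverani0,Keller-Liverani1}. The rest of the argument is elementary, as indicated.
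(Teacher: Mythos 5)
Your first two steps are fine and essentially reproduce the elementary estimates in the paper's own proof (geometric growth of a small coordinate, and the one-step improvement from $\tau_1$ to $\tau_2$ once a coordinate lies in $[\tau_1,1-\tau_1]$). The genuine gap is your third step, which you yourself flag as the crux and do not prove: you reduce the lemma to the assertion that almost every orbit meets the fold strip $E=\{|x_1-1/2|<\rho_0\}\cup\{|x_2-1/2|<\rho_0\}$, $\rho_0\approx s_0/4$, only finitely often, and propose to obtain this from decay of correlations or a spectral gap. That strategy cannot work: $E$ has positive Lebesgue measure, the weakly coupled system carries an absolutely continuous invariant measure which charges $E$ (these are exactly the results of Keller and Keller--Liverani you invoke), and mixing together with the ergodic theorem then gives that typical orbits visit $E$ with positive asymptotic frequency --- the opposite of ``finitely often''. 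So the statement you reduced the lemma to is false, not merely difficult, and no Borel--Cantelli or return-time summability argument can rescue it. Indeed, reading $f_0(x)=1-s|x-1/2|$ literally (peak value $1$), a coordinate exceeds $1-\tau_1$ at every step following a visit to $E$, so the eventual upper bound itself would fail for almost every point.

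The paper's proof avoids this issue entirely because it uses the bound $\max_{[0,1]}f\le \frac{s}{2}+\eta=1-\tau_1$, i.e.\ it works with the tent map normalized so that its peak value is $s/2$ (equivalently $f_0(x)=s\min(x,1-x)$); under that normalization the upper cut-off holds deterministically for every point from time $1$ on, and the whole lemma is a short trapping-region argument (small coordinates grow until they exceed $1/4$, coordinates in $[1/4,3/4]$ map below $1-\tau_1$, and anything in $[\tau_1,1-\tau_1]$ maps above $\tau_2$), with ``almost every'' only discarding exceptional points near the endpoints. Your difficulty is thus a symptom of the mismatch between the formula for $f_0$ stated in Theorem \ref{nonlinear} and the bound $\max f\le s/2+\eta$ actually used in the paper's proof: with the peak-$1$ normalization the interval $[\tau_2,1-\tau_1]$ is not an eventual trapping region and the statement would need to be reformulated, while with the peak-$s/2$ normalization your third step is unnecessary. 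As written, however, your proposal does not prove the lemma: the decisive step is missing, and the route you sketch for it would fail.
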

\begin{Proof}
If $0<x_1(\bp )<\frac{1}{4}$, then $x_1(T(\bp ))=(1-c)f(x_1(\bp ))+cf(x_2(\bp ))\geq (s-\eta)(1-c)x_1(\bp )$. Thus $\{x_1(T^i(\bp ))\}_i$ is an increasing sequence if only $x_1(T^i(\bp))\le \frac14$.
If $x_1(\bp )> \frac{3}{4}$, then $x_1(T(\bp ))\leq  \max_{x\in[\frac{3}{4},1]} f(x)<\frac{3}{4}$.

Thus we only need to consider the situation $\frac{1}{4}\leq x_1(\bp )\leq \frac{3}{4}$.  For this case, it holds that
$$ x_1(T(\bp ))=(1-c)f(x_1(\bp ))+cf(x_2(\bp) ))\leq \max_{x\in[0,1]}f(x)\leq \frac{s}{2}+\eta= 1-\frac{s_0}{2}+\eta =1-\tau_1. $$
Subsequently, we have
$$ x_1(T^2(\bp ))\geq (1-c)f(\tau_1)\geq (1-c)(s-\eta)(\frac{s_0}{2}-\eta )=\tau_2.$$

Thus eventually the orbit of $\bp$ under the map $T$ lies between $x_1=\tau_2$ and $x_1=1-\tau_1$.

Similarly we can obtain the estimate for $x_2(\bp)$. This completes the proof. \hfill   \qedbox
\end{Proof}
\vskip 0.2cm
 From Lemma \ref{equivalent-ds}, without loss of generality we can replace the phase space $[0,1]^2$ by $[\tau_2, 1-\tau_1]^2$, that is,  \begin{equation}\label{*} x_1, x_2\in [\tau_2, 1-\tau_1]. \end{equation}

    \begin{Lemma}\label{nonlinear-after-delta1}
   There exists $c_1>0$ such that for any $0\le c<c_1$, there exists a fixed number $c_2>0$ such that
for any piecewise ${\cal C}^2$-smooth simple curve $\Gamma_0$ with a length $\bm(\Gamma_0)\in [\delta_1, 2\delta_1]$ and satisfying $\mathrm{r_a}(\Gamma_0)\le O(\eta ) \bm(\Gamma_0)$, there exists a simple curve  $\Gamma\subset \Gamma_0$ with $\bm(\Gamma)\ge c_2 \bm(\Gamma_0)$ and $l(\Gamma)\in \NN$ satisfying $T^{l(\Gamma)}(\Gamma)\subset G_{\epsilon}$.
       \end{Lemma}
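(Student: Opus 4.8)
\textbf{Proof proposal for Lemma \ref{nonlinear-after-delta1}.}
The plan is to mimic the piecewise-linear argument used in the proof of Proposition \ref{step2-L2.2}, but carrying along the flatness control $\mathrm{r_a}(\cdot)$ established in Lemma \ref{flat} so that curves behave, for all practical purposes, like segments of slope $\pm1$. First I would record the elementary fact that once $\bm(\Gamma_0)$ has reached constant scale $[\delta_1,2\delta_1]$ and $\mathrm{r_a}(\Gamma_0)=O(\eta)\bm(\Gamma_0)$ is tiny, the curve $\Gamma_0$ lies within an $O(\eta)$-tube around an honest segment of slope close to $\pm1$; since any such long segment inside $[\tau_2,1-\tau_1]^2$ either already meets $D_{\mathrm{syn}}$ or, after passing through $T$ or $T^2$, produces a longer curve (length multiplied by at least $\lambda=2(1-2c)>1$, up to the $O(c,\eta)$ correction to the Jacobian), we are in the same "grow-the-curve" situation as before.

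The main induction runs as follows. Suppose a current curve $\Gamma_i$ lies in some small square, has length $\ge\delta_1$, satisfies the flatness bound $\mathrm{r_a}(\Gamma_i)\le\hat a\,\bm(\Gamma_i)$ from Lemma \ref{flat}, and has empty intersection with $D_{\mathrm{syn}}$. Then exactly as in the claim inside the proof of Proposition \ref{step2-L2.2} — using that two curves of slopes near $+1$ and near $-1$ whose supporting lines are symmetric about a vertical line cannot both cross the same line $x_j=1/2$ — I would extract from $T(\Gamma_i)$ or $T^2(\Gamma_i)$ a sub-curve $\Gamma_{i+1}$ lying in a small square with $\bm(\Gamma_{i+1})\ge(1+e)\bm(\Gamma_i)$ for a fixed $e=e(c,\eta)>0$; the flatness of $\Gamma_{i+1}$ is still controlled by the recursion \eqref{ki}–\eqref{near-flat}, so it stays $\le\hat a\,\bm(\Gamma_{i+1})$. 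Since $[0,1]^2$ has finite diameter, this geometric growth must stop after $i_0\le\lfloor-\log_{1+e}\delta_1\rfloor+1$ steps, at which point $\Gamma_{i_0}$ necessarily meets $D_{\mathrm{syn}}$, and because $\Gamma_{i_0}$ is long and nearly straight, $\bm(\Gamma_{i_0}\cap G_\epsilon)\ge c(\epsilon)>0$. Pulling this sub-curve back through the at most $i_0$ applications of $T$ (each of which contracts lengths by at most a fixed factor, since $\|JT\|\le 1+O(c,\eta)$, hence $\|JT^{-1}\|$ is bounded on each small square) yields $\Gamma\subset\Gamma_0$ with $\bm(\Gamma)\ge c_2\bm(\Gamma_0)$ and $T^{l(\Gamma)}(\Gamma)\subset G_\epsilon$, with $c_2$ depending only on $c,\eta,\epsilon,\delta_1$.

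A few technical points need care. One is the bookkeeping of components: applying $T$ can split $\Gamma_i$ into several pieces, but by construction (and Remark \ref{almost-everywhere}, so that the relevant intersections with $D_\infty$ are negligible) we only ever follow one component, and the flatness estimate guarantees that when a "fold" curve arises we may legitimately replace it by a single short nearly-vertical/horizontal curve and continue, exactly as in cases (1iii), (2iii), (kiii) of Lemma \ref{flat}. A second point is the restriction to $[\tau_2,1-\tau_1]^2$ from Lemma \ref{equivalent-ds}: this only shrinks the square slightly and does not affect the expansion estimate, but I would double-check that the "cannot cross both $x_1=1/2$ and $x_2=1/2$" geometry survives the $O(\eta)$ distortion of slopes — this is where I expect the real work, since for genuine curves the supporting "lines" are only approximately symmetric about a vertical line, so the argument must be made quantitative: the $O(\eta)$ error in the angles has to be dominated by the fixed gap coming from $\bm(\Gamma_i)\ge\delta_1$ being of constant size. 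Provided $\eta$ is chosen small relative to $\delta_1$ (which is fixed), this gap argument closes and the lemma follows.
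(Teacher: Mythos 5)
Your reduction to Proposition \ref{step2-L2.2} rests on the assertion that $\Gamma_0$ ``lies within an $O(\eta)$-tube around an honest segment of slope close to $\pm1$,'' but that is not what the hypothesis gives you: $\mathrm{r_a}(\Gamma_0)\le O(\eta)\,\bm(\Gamma_0)$ only controls the angular spread along the curve, not its direction, and the curves that actually have to be handled here (including the ``equivalent curves'' created by Lemma \ref{flat} at folds, and their iterates inside this lemma) can have arbitrary average slope --- the paper states this explicitly when it contrasts the present situation with Proposition \ref{step2-L2.2}. In particular, nearly vertical curves occur and persist: for small $c$ a tangent vector $(0,1)$ is mapped to roughly $(2c,\pm2(1-c))$, so verticality is not destroyed in a bounded number of steps. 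For such curves the geometric exclusion you import wholesale --- ``two branches lying on lines symmetric about a vertical line cannot both cross $x_1=1/2$'' --- is simply false: when the branches are themselves nearly vertical, both can cross $x_1=\tfrac12$, and this is exactly the configuration the paper isolates as Case 4(b)(ii)(i) of its claim. Your closing remark that the remaining issue is an ``$O(\eta)$ distortion of slopes'' to be dominated by the constant-scale gap misdiagnoses the difficulty: the deviation of the direction from $\pm1$ is not $O(\eta)$, it can be of order one, so the induction step ``extract $\Gamma_{i+1}\subset T(\Gamma_i)$ or $T^2(\Gamma_i)$ with $\bm(\Gamma_{i+1})\ge(1+e)\bm(\Gamma_i)$ or else meet $D_{\mathrm{syn}}$'' is not justified for all admissible $\Gamma_0$.

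This is precisely where the paper does its real work, and none of that machinery appears in your outline. The paper first restricts the dynamics to the window $[\tau_2,1-\tau_1]^2$ via Lemma \ref{equivalent-ds} (which is why the slope is taken as $s=2-s_0$ with $s_0>0$), and then, in the nearly vertical case, tracks the branches explicitly: their images nearly coincide with $x_1=1-\tau_1$, the next images with $x_1=\tau_2$, and then, using the horizontal expansion factor $(s-\eta)(1-c)>1$ valid while $x_1\le\tfrac1{100}$, it iterates an extra $i_0=O\bigl(\log_{(s-\eta)(1-c)}(1/\tau_2)\bigr)$ times until the curve, which spans $x_2$ from $O(\tau_0)$ to $1-O(\tau_0)$, has horizontal extent of order $\bigl[\tfrac1{100},\tfrac1{10}\bigr]$; only then can one conclude it must cross $x_1=x_2$ or $x_1+x_2=1$, after which the pull-back estimate (which you do state correctly, with expansion constant $2+O(c,\eta)$ rather than $1+O(c,\eta)$) gives $c_2$. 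Without an argument covering curves whose direction is far from $\pm1$, your proposal has a genuine gap at the central step of the lemma.
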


\begin{proof}
It can be reduced to the following claim.

{\bf Claim}. Let $\delta_1=2^{-16}$ defined as before. Then there exist a constant $\hat{e}>1$ and $i_0$ independent of $\eta $, such that for $\eta \ll 1$, the following holds true:
\begin{itemize}
  \item[]
Assume for each $0\le i\le j\le N_0$ with $N_0=\left\lfloor\log_{\hat{e}}2\delta_1^{-1}\right\rfloor+1$, we have inductively defined  piecewise ${\cal C}^2$-smooth simple curves $\Gamma_i$ in some small square with ${\Gamma}_{i}\subset T^{l(i)}(\Gamma_{i-1})$ for some $l(i)\le i_0+3$ ($i\ge 1$), such that
$\bm(\Gamma_i)\geq \hat{e}\bm(\Gamma_{i-1})$
 and $\mathrm{r_a}(\Gamma_i)\leq O(\eta )\bm(\Gamma_i)$.

If for each $0\le i\le j$, $\bm(\Gamma_i)\leq 2$ and $\Gamma_{i}\cap \{x_1=x_2\}\neq \emptyset$, we have that there exist $l(j+1)\le i_0+3$ and a simple curve  ${\Gamma}_{j+1}\subset T^{l(j+1)}(\Gamma_j)$ in some small square such that
  (a)$\bm(\Gamma_{j+1})\geq \hat{e}\bm(\Gamma_{j})$;
  (b) $\mathrm{r_a}(\Gamma_{j+1})\leq O(\eta )\bm(\Gamma_{j+1})$.
\end{itemize}
`Claim $\Rightarrow$Lemma \ref{nonlinear-after-delta1}'.

First we prove the existence of a $j_0\le N_0$ satisfying $\Gamma_{j_0}\cap \{x_1=x_2\}\neq \emptyset$. Otherwise, from the claim we have that either there exists $k_0\le N_0$ such that $\Gamma_i$ is defined for each $0\leq i\leq k_0$ satisfying $\bm(\Gamma_{k_0})\geq 2$ and $\bm(\Gamma_i)<2$ for $0\leq i\leq k_0-1$, or for each $0\leq i \leq N_0$, $\Gamma_i$ is defined with $\bm(\Gamma_{i})<2$.

Note that for any simple curve $\Gamma$ in some small square, it holds that $(\lambda-O(\eta ))\bm(\Gamma)\leq \bm(T(\Gamma))\leq (2+O(\eta ))\bm(\Gamma)$. Then for the former case, from (b) in the claim, we have
 \begin{align*}& \mathrm{r_a}(\Gamma_{k_0})\leq O(\eta )\bm(\Gamma_{k_0}) \leq  O(\eta )\bm(T^{l(k_0)}(\Gamma_{k_0-1}))\\\leq & (2+O(\eta ))^{l(k_0)}O(\eta )\bm(\Gamma_{k_0-1})\leq 2O(\eta )(2+O(\eta ))^{3+i_0}.\end{align*}
 Hence if $O(\eta )\leq 200^{-1}$, we obtain that $\mathrm{r_a}(\Gamma_{k_0})\leq 10^{-1}$. But $\bm(\Gamma_{k_0})\geq 2$ and it is clear that there is no such a simple curve  in $[0,1]^2$.
For the latter case, from (a) in the claim, we know that $\bm(\Gamma_{N_0})\geq \hat{e}^{N_0}\bm(\Gamma_0)\geq \hat{e}^{\left\lfloor\log_{\hat{e}}2\delta_1^{-1}\right\rfloor+1}\delta_1\geq 2$.
This contradicts the assumption that $\bm(\Gamma_{N_0})<2$ and hence we obtain the existence of $j_0$. From the definition of $\Gamma_i$, $i\le j_0$, there exists some $n(j_0)\le (i_0+3)j_0$ such that $\Gamma_{j_0}\subset T^{n(j_0)}(\Gamma_0)$.

 Let $\widetilde{\Gamma}=\Gamma_{j_0}\cap G_{\epsilon}$ and define ${\Gamma}\subset \Gamma_0$ such that $T^{n(j_0)}({\Gamma})=\widetilde{\Gamma}$. First we consider the case that $\Gamma_{j_0}\subset G_{\epsilon}$, that is, $\widetilde{\Gamma}=\Gamma_{j_0}$.
From (a) in the claim, we have
$\bm(\Gamma_{j_0})\geq  \hat{e}^{j_0}\bm(\Gamma_{0}).
$
On the other hand, it holds that $\bm(\Gamma)\ge (2+O(\eta ))^{-n(j_0)}\bm(\Gamma_{j_0})$.  Consequently, we obtain
\begin{align*}
&\bm({\Gamma})\geq (2+O(\eta ))^{-(i_0+3)j_0} \hat{e}^{j_0}\bm({\Gamma_0})\ge (2+O(\eta ))^{-(i_0+3)N_0}\bm(\Gamma_{0}).
\end{align*}
This leads to the conclusion if $c_2\leq (2+O(\eta ))^{-(3+i_0)N_0}$.

Next we consider the case that $\Gamma_{j_0}\nsubseteq G_{\epsilon}$. Obviously $\bm(\widetilde{\Gamma})\geq \epsilon$. Then we have that
\begin{align*} &\bm({\Gamma})\geq (2+O(\eta ))^{-n(j_0)}\bm(\widetilde{\Gamma})\\ \geq & (2+O(\eta ))^{-(3+i_0)N_0}\epsilon\geq (2+O(\eta ))^{-(3+i_0)N_0}\epsilon (2\delta_1)^{-1}\bm(\Gamma_0).\end{align*}
If $c_2\leq (2+O(\eta ))^{-(3+i_0)N_0}\epsilon (2\delta_1)^{-1}$, we obtain the conclusion.

Thus by setting $c_2=\min\{1, \epsilon (2\delta_1)^{-1}\} (2+O(\eta ))^{-(3+i_0)N_0}$, we finish the proof.
\vskip 0.3cm
`Proof of the claim'.

\vskip 0.2cm
We consider the following cases.
\begin{enumerate}
\item First, we note if both $T(\Gamma_j)\cap \{x_1=\frac{1}{2}\}$ and $T(\Gamma_j)\cap \{x_2=\frac{1}{2}\}$ are nonempty, then $T(\Gamma_j)\cap \{x_1=x_2\}\not=\emptyset$ or $T(\Gamma_j)\cap \{x_1+x_2=1\}\not=\emptyset$ and hence $T^2(\Gamma_j)$ is nonempty, which implies Lemma \ref{nonlinear-after-delta1} from the argument above. Hence in the following, we will omit the proof for this trivial case and other similar ones (e.g., both $T^2(\Gamma_j)\cap \{x_1=\frac{1}{2}\}$ and $T^2(\Gamma_j)\cap \{x_2=\frac{1}{2}\}$ are nonempty). In addition, the estimate on the angle is same as the one in Lemma \ref{flat}, we omit the argument on (b) and only focus on the proof of (a).

For nontrivial cases, the method to define $\Gamma_{j+1}$ is a combination of those in Proposition \ref{step2-L2.2} and Lemma \ref{flat}.
In fact, the difference between here and Proposition \ref{step2-L2.2} lies in that $T(\Gamma_j)$ is no longer a segment here  and thus $T(\Gamma_j)\cap \{x_1=\frac{1}{2}\}$ (or  $T(\Gamma_j)\cap \{x_2=\frac{1}{2}\}$)
may has two or more points. In addition, the `average slope' of the simple curve  can be arbitrary. In contrast, for the situation in
Proposition \ref{step2-L2.2}, the slope of the segment is $\pm 1$ and hence the argument there is much simpler.

\vskip 0.2cm

\item  $T(\Gamma_j)\cap (\{x_1=\frac{1}{2}\}\cup\{x_2=\frac{1}{2}\})=\emptyset$. Define $\Gamma_{j+1}=T(\Gamma_j)$. Then $\bm(\Gamma_{j+1})\ge (\lambda-O(\eta ))\bm(\Gamma_j)$ with $\lambda-O(\eta )>1$ if $ | \lambda-2 | ,  |\eta | \ll 1$.

\item $T(\Gamma_j)\cap (\{x_1=\frac{1}{2}\}$  has two or more points and $T(\Gamma_j)\cap (\{x_2=\frac{1}{2}\}=\emptyset$ (or vice versa).  We replace $T(\Gamma_j)$ by a simple  (still piecewise ${\cal C}^2$-smooth) curve $\widehat{T(\Gamma_j)}$ totally in some small square (by reflecting $T(\Gamma_j)$ with respect to $x_1=\frac{1}{2}$ and $x_2=\frac{1}{2}$, respectively)  such that $T(T(\Gamma_j))=T(\widehat{T(\Gamma_j)})$.
Then we define $\Gamma_{j+1}=\widehat{T(\Gamma_j)}$ (although $\Gamma_{j+1}$ is not in the image of $\Gamma_j$, for our purpose, $\widehat{T(\Gamma_j)}$ and ${T(\Gamma_j)}$ are equivalent) and the case is similar to that in  Case 2.


\item $ T(\Gamma_j)\cap \{x_2=\frac{1}{2}\})$ is one-point set and  $T(\Gamma_j)\cap \{x_1=\frac{1}{2}\}=\emptyset$ (or vise versa). Let
$\Gamma_{j+1,1}\cup\Gamma_{j+1,2}$ be two components of $T(\Gamma_j)$, i.e., $\Gamma_{j+1,l}$ is in some small square $(l=1,2)$. It can be divided into the following subclasses.
\begin{enumerate}
\item Assume $\max \{\bm(\Gamma_{j+1,1}), \bm(\Gamma_{j+1,2})\}\geq (1+e-O(\eta ))\bm(\Gamma_j)$, $e=\frac{\lambda^2}{\lambda+1}-1$. Without loss of generality, let $\bm(\Gamma_{j+1,1})\ge\bm(\Gamma_{j+1,2})$. Then the proof of  claim (a) is completed by setting $\Gamma_{j+1}=\Gamma_{j+1,1}$ and $\hat{e}=1+e-O(\eta )$.

\item Assume $\max \{\bm(\Gamma_{j+1,1}), \bm(\Gamma_{j+1,2})\}\leq (1+e-O(\eta ))\bm(\Gamma_j)$. By (\ref{sum-length}), we have $\bm(\Gamma_{j+1,l})\geq (\lambda-(1+e)-O(\eta ))\bm(\Gamma_j), \ l=1,2.$ Denote $\widehat{\Gamma}_{j+1,l}=T(\Gamma_{j+1,l}), \ l=1,2.$ We need to consider the following sub-cases:
\begin{enumerate}
\item  $ \widehat{\Gamma}_{j+1,1}\cap (\{x_1=\frac{1}{2}\}\cup\{x_2=\frac{1}{2}\})=\emptyset$  (or $ \widehat{\Gamma}_{j+1,2}\cap (\{x_1=\frac{1}{2}\}\cup\{x_2=\frac{1}{2}\})=\emptyset$). Then it holds that
\begin{align*} \bm(\widehat{\Gamma}_{j+1,1})\geq & (\lambda-O(\eta ))\bm(\Gamma_{j+1,1})\geq  (\lambda-O(\eta ))(\lambda-(1+e)-O(\eta ))\bm(\Gamma_j)\\
\geq &  (1+e-O(\eta ))\bm(\Gamma_j)\geq \hat{e}\bm(\Gamma_j).
\end{align*} Thus it is sufficient to choose $\Gamma_{j+1}=\widehat{\Gamma}_{j+1,1}$.

\item  Both $\widehat{\Gamma}_{j+1,1}\cap(\{x_1=\frac{1}{2}\}\cup\{x_2=\frac{1}{2}\})$ and  $ \widehat{\Gamma}_{j+1,2}\cap(\{x_1=\frac{1}{2}\}\cup\{x_2=\frac{1}{2}\})$ are one-point sets.

Note that angles between $\Gamma_{j+1,1}$, ${\Gamma}_{j+1,2}$ and a vertical line are nearly equal to each other at the point $\Gamma_{j+1,1}\cap {\Gamma}_{j+1,2} $, since $\mathrm{r_a}(\Gamma_{j})\ll 1$
if $ | \eta  | \ll 1$ and $T|_{\Gamma_j}$ is close to a linear map. Subsequently, since
$\mathrm{r_a}(\Gamma_{j+1,1}),\  \mathrm{r_a}({\Gamma}_{j+1,2})\ll 1$ if $ | \eta  | \ll 1$, we have that angles between the tangent line at any point of $\Gamma_{j+1,1}$ or ${\Gamma}_{j+1,2} $  and a vertical line are nearly two constants which are nearly equal to each other.
In other words, $\Gamma_{j+1,1}$ and ${\Gamma}_{j+1,2}$ almost lie in two lines symmetric corresponding to a vertical line. The case for $\widehat{\Gamma}_{j+1,1}$ and $\widehat{\Gamma}_{j+1,2}$ is similar if $ | \eta  | \ll 1$. Without loss of generality, it is sufficient to consider the following two subcases.
\begin{enumerate}
 \item  Both $T(\Gamma_{j+1,l})\cap\{x_1=\frac{1}{2}\}$, $l=1,2$ are nontrivial one-point set.
 For this case, we have  $\widehat{\Gamma}_{j+1,l}(l=1,2)$ almost lie in two lines which are very close to the vertical line $x_1=\frac{1}{2}$.
It implies that  $T(\widehat{\Gamma}_{j+1,l})$ almost coincides with the boundary $x_1=1-\tau_1$.

If $T(\widehat{\Gamma}_{j+1,1})\cap\{x_2=\frac{1}{2}\}=\emptyset$, then the argument is completed by setting $\Gamma_{j+1}=T(\widehat{\Gamma}_{j+1,1})$.

Consider the case $T(\widehat{\Gamma}_{j+1,1})\cap\{x_2=\frac{1}{2}\}\not=\emptyset$. Denote components of  $T(\widehat{\Gamma}_{j+1,1})$ by $\widetilde{\Gamma}_{j+1,l}, l=1, 2$. Obviously,  $T(\widetilde{\Gamma}_{j+1,l})$ is close to $x_1=\tau_2\ll 1$ and thus has no intersection with $x_1=\frac12$. Hence we only need to consider the intersection of it with $x_2=\frac12$.
It thus can be reduced to the following subcase.
\vskip 0.2cm
\item  $T(\Gamma_j)\cap\{x_2=\frac{1}{2}\}$ and $(\widehat{\Gamma}_{j+1,1}=T(\Gamma_{j+1,1}))\cap\{x_2=\frac{1}{2}\}$ are nontrivial  one-point set, which are denoted by $\bp _j$ and $\bp _{j+1}$, respectively. Obviously, $\bp _j\in \Gamma_{j+1,1}$. From the fact that $T$ is a perturbation of uncoupled tent map (with a slope $ s =2-s_0$ satisfying $1\gg s_0\gg\eta  \geq 0$), we have that
$$ | x_2(T(\bp _{j}))-1 | =O(\max\{ s_0,c_1,\eta \})\equiv O(\tau_0).$$
Obviously $x_2(\bp _{j+1})=\frac{1}{2}$, hence $x_2(T(\bp _{j+1}))=1-O(\tau_0)$. Similarly, $x_2(T^2(\mathbf{p}_{j}))=O(\tau_0)$.
Since $T^2(\mathbf{p}_{j}), T(\bp _{j+1})\in T(\widehat{\Gamma}_{j+1,1}))=\widetilde{\Gamma}_{j+1,1}$ and $\mathrm{r_a}(\widehat{\Gamma}_{j+1,1})=O(\eta )$, roughly speaking, $\widetilde{\Gamma}_{j+1,1}$ is nearly a vertical segment from the bottom to the top.
Define $\Gamma^1_{j+1}$ be the component of  $\widetilde{\Gamma}_{j+1,1}$  such that $\max_{\bp \in\Gamma^1_{j+1}}x_2(\bp )=\frac{1}{2}$, $\min_{\bp \in\Gamma^1_{j+1}}x_2(\bp )=O(\tau_0)$, $\mathrm{r_a}(\Gamma^1_{j+1})=O(\eta )$.

If $T(\Gamma^1_{j+1})\cap(\{x_1=x_2\}\cup\{x_1+x_2=1\})\not=\emptyset$, then the proof is completed. Thus assume the intersection set is empty.
 Then it is necessary that
$\max_{\bp \in T(\Gamma^1_{j+1})}x_1(\bp )=O(\eta+\tau_0)$.
Otherwise, $\max_{T(\Gamma_{j+1}^{i_0})}x_1(\bp)\gg \eta+\tau_0$. Note that $T(\Gamma_{j+1}^{i_0})$ is nearly a segment. Let $\bp_1, \bp_2$ be two end points of $T(\Gamma_{j+1}^{i_0})$ such that $x_2(\bp_1)\ge 1-O(\tau_0)-O(\eta)$ and $x_2(\bp_2)=O(\tau_0)$. Again from the fact that $T(\Gamma_{j+1}^{i_0})$ is nearly a segment, we have either  $x_1(\bp_1) \gg \eta+\tau_0$ or $x_1(\bp_2) \gg \eta+\tau_0$. For the former case, we have  $x_1(\bp_1)+x_2(\bp_1)-1\gg \eta+\tau_0-\tau_0-\eta =0$, while $x_1(\bp_2)+x_2(\bp_2)\le 1/2+O(\tau)\le 1$, which implies the intersection between $T(\Gamma_{j+1}^{i_0})$ and $x_1+x_2=1$ is nonempty. For the latter case, we have $x_2(\bp_2)<x_1(\bp_2)$, while $x_2(\bp_1)>1/2>x_1(\bp_1)$, which implies that   $T(\Gamma_{j+1}^{i_0})$ and $x_1=x_2 $ is nonempty.
 which makes the assumption on the empty intersection impossible.
%

Define $\Gamma^2_{j+1}$ as before such that $\max_{\bp \in\Gamma^2_{j+1}} x_2(\bp )=\frac{1}{2}$, $\min_{\bp \in\Gamma^2_{j+1}} x_2(\bp )=O(\tau_0), $ $\mathrm{r_a}(\Gamma^2_{j+1})=O(\eta )$. From (\ref{*}), it holds that there is an $i_0\leq [\log_{(s-\eta)(1-c)} \frac{1}{100}/\tau_2]+1$, such that
$\max_{\bp \in\Gamma^2_{j+1}} x_1(T^{i_0}(\bp ))\in [\frac{1}{100} ,\frac{1}{10}]$.
In fact, since $x_1(\bp )\geq  \tau_2  $, from the argument above, we have $x_1(T^i (\bp ))\geq (s-\eta)(1-c)x_1(T^{i-1}(\bp ))$ only if $x_1(T^{i-1}(\bp ))\leq \frac{1}{100}$, which justifies the definition of $i_0$. In a word, we obtain a simple  curve $\Gamma^{i_0}_{j+1}$ satisfying $\max_{\bp \in\Gamma^{i_0}_{j+1}} x_2(T(\bp ))=1-O(\tau_0)$,
$\min_{\bp \in\Gamma^{i_0}_{j+1}} x_2(T(\bp ))=O(\tau_0)$,
$ \max_{\bp \in\Gamma^{i_0}_{j+1}} x_1(T(\bp ))\in \left[ \frac{1}{100},\frac{1}{10}\right]$
and
$\mathrm{r_a}(T(\Gamma^{i_0}_{j+1}))=O(\eta )$.
 By the same argument above, we obtain $T(\Gamma^{i_0}_{j+1})\cap (\{x_1=x_2\}\cup\{x_1+x_2=1\})\neq\emptyset$. We can end the proof of Lemma \ref{nonlinear-after-delta1} similarly as before.


\end{enumerate}

\item  $\widehat{\Gamma}_{j+1,1}\cap(\{x_1=\frac{1}{2}\}$ (or $\widehat{\Gamma}_{j+1,1}\cap(\{x_2=\frac{1}{2}\}$) includes two or more points, or $\widehat{\Gamma}_{j+1,2}\cap(\{x_1=\frac{1}{2}\}$ (or $\widehat{\Gamma}_{j+1,2}\cap(\{x_2=\frac{1}{2}\}$) includes two or more points. It can be reduced to Case $4$-$(b)$-${\rm i}$ by the same argument as in Case 2, we omit it here.\hfill\qedbox\end{enumerate}\end{enumerate}\end{enumerate}
\end{proof}
\vskip 0.3cm
       \noindent{\it Proof of (\ref{order-part}) in Theorem \ref{nonlinear}}\qquad
       From Lemma \ref{flat} and \ref{nonlinear-after-delta1}, we obtained that there exists $c_6>0$ such that for any segment $\Gamma_0$ in some small square
       with a slope $1$, there exists disjoint segments $\Gamma_1, \Gamma_2, \cdots, $ such that for each $i$ there exists $l(i)$ such that $T^{l(i)}(\Gamma_i)\subset G_{\epsilon}$ and
       $\sum_i\bm(\Gamma_i)\ge c_6\bm(\Gamma_0)$. From the arbitrariness of $\Gamma_0$ and Fubini's Theorem, we obtain that for almost every initial point $\bp \in [0, 1]^2$,
       there exists $l(\bp )$ such that $T^{l(\bp )}(\bp )\in G_{\epsilon}$. This completes the proof for (\ref{order-part}).\hfill \qedbox

\vskip 0.2cm
    The disordered part can be obtained by a series of lemmas. First we prove a partial result as follows.

   \begin{Lemma}\label{5.4} There exists a subset $B_0$ of $G_{\gamma_0}$ with a positive measure such that for each point $\bp \in B_0$, there exists a finite time $l=l(\bp )$ such that
    $T^{l}(\bp )\in B_{\gamma_0}$.
    \end{Lemma}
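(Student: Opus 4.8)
The plan is to use that Lemma \ref{5.4} only asks for a set of \emph{positive} measure: it suffices to exhibit one small box $Q_0\subset G_{\gamma_0}$ near the diagonal, all of whose points leave $G_{\gamma_0}$ within a uniformly bounded number of iterations; then $B_0:=Q_0$ works. The engine is the transverse hyperbolicity of $T$ along $D_{\mathrm{syn}}$ (compare the escape argument in Proposition \ref{step2}). If $\bp=(x_1,x_2)$ has both coordinates on one branch of $f$ (both $\le 1/2$, or both $\ge 1/2$), then $T(\bp)=(x_1',x_2')$ satisfies
$$
x_1'-x_2'=(1-2c)\bigl(f(x_1)-f(x_2)\bigr)=(1-2c)\,f'(\xi)\,(x_1-x_2)
$$
for some $\xi$ in that branch, so $|x_1'-x_2'|\ge\lambda_0\,|x_1-x_2|$ with $\lambda_0:=(1-2c)(2-s_0-\eta)>1$; shrinking $c_+,s_{0+},\eta$ one may take $\lambda_0>3/2$ (indeed arbitrarily close to $2$), and also $\|JT\|\le 3$. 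Hence, as long as the orbit of a neighbourhood of the diagonal does not cross a fold line $\{x_1=1/2\}$ or $\{x_2=1/2\}$, the distance to $D_{\mathrm{syn}}$ is multiplied by at least $\lambda_0$ at each step, so, since $\gamma_0=2^{-20}$ is fixed, escape from $G_{\gamma_0}$ is forced after a fixed number $N$ of steps.

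First I would fix a reference orbit. As $D_{\mathrm{syn}}$ is invariant and $T|_{D_{\mathrm{syn}}}$ is conjugate to $f$, the orbit of $\bp^*=(y^*,y^*)$ is $\bigl(f^j(y^*),f^j(y^*)\bigr)_{j\ge 0}$. Since $\bigcup_{j\ge 0}f^{-j}(\{1/2\})$ is countable, for a fixed margin $\rho$ and a fixed $N$ the set $W:=\{\,y:\ |f^j(y)-1/2|>\rho\ \text{for }0\le j\le N\,\}$ is open, and because $f^{-j}$ of the interval $(1/2-\rho,1/2+\rho)$ has length $O(\rho)$ it has measure $\ge 1-O\bigl((N+1)\rho\bigr)>0$; I would pick any $y^*\in W$, the values of $\rho$ and $N$ being pinned down in the next step.

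Next I would take $Q_0$ to be a small box near $\bp^*$, contained in one branch square and in $G_{\gamma_0}$, all of whose points lie at distance at least $d_0$ from $D_{\mathrm{syn}}$, with $\mathrm{diam}(Q_0\cup\{\bp^*\})\le 3d_0$. A routine induction on $i\le N$ then shows $\mathrm{diam}\bigl(T^i(Q_0\cup\{\bp^*\})\bigr)\le 3^i\,\mathrm{diam}(Q_0\cup\{\bp^*\})<\rho$, so $T^i(Q_0)$ lies within $\rho$ of $T^i(\bp^*)=(f^i(y^*),f^i(y^*))$; as $y^*\in W$, both coordinates of every point of $T^i(Q_0)$ then stay on the same branch as $f^i(y^*)$, so $T$ is a diffeomorphism there and the transverse-expansion bound applies at each step $i<N$. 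Therefore $\mathrm{dist}\bigl(T^N(\bp),D_{\mathrm{syn}}\bigr)\ge\lambda_0^{\,N}d_0$ for every $\bp\in Q_0$, which exceeds $\gamma_0$ provided
$$
3^{N+1}d_0<\rho,\qquad \lambda_0^{\,N}d_0>\gamma_0,\qquad 4d_0<\gamma_0,\qquad (N+1)\rho\ \text{small}.
$$
Because $\lambda_0/3<1<\lambda_0$, the first two conditions amount to $\gamma_0/\lambda_0^{N}<d_0<\rho/3^{N+1}$, and a direct check shows this is satisfiable for a suitable fixed $N$ (for instance $N=6$, $\lambda_0$ close to $2$, $\rho=2^{-10}$, $d_0\sim 2^{-23}$), for which all four conditions and $\bm(W)>0$ hold simultaneously. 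Then $B_0:=Q_0$ has $\bm(B_0)>0$, and every $\bp\in B_0$ satisfies $T^{l(\bp)}(\bp)\in B_{\gamma_0}$ with $l(\bp)\le N$.

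I do not anticipate a real difficulty; the argument is elementary, and the only point requiring care is the simultaneous choice of the constants $N,\rho,d_0$, ensuring that the orbit of the box provably avoids the fold lines for as many steps as the transverse expansion needs in order to carry the distance past $\gamma_0=2^{-20}$. The hypotheses $c<c_+$, $s_0<s_{0+}$, $\|g\|_{C^2}<\eta$ with the constants small enter only through $\lambda_0>3/2$ and $\|JT\|\le 3$; in particular no flatness estimate is needed here. One may, if convenient, first invoke Lemma \ref{equivalent-ds} to place everything in $[\tau_2,1-\tau_1]^2$, but this is not essential.
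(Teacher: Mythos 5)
Your argument is correct as a proof of Lemma \ref{5.4} as literally stated, but it takes a genuinely different route from the paper. You fix a reference point $(y^*,y^*)$ on the diagonal whose $f$-orbit stays a distance $\rho$ away from the critical point $1/2$ for $N$ steps (possible since the bad set has measure $O(\rho)$ for fixed $N$), and then take $B_0$ to be a tiny box at distance $\approx d_0$ from $D_{\mathrm{syn}}$ near $(y^*,y^*)$: the diameter control $\|JT\|\le 2+O(\eta)$ keeps the box on a single branch square for $N$ steps, the transverse expansion $|x_1'-x_2'|=(1-2c)|f(x_1)-f(x_2)|\ge\lambda_0|x_1-x_2|$ then forces escape from $G_{\gamma_0}$ in at most $N$ iterates, and your numerical choices of $N,\rho,d_0$ are consistent with $\gamma_0=2^{-20}$. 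The paper instead defines $B_0$ globally: it sets $\widetilde G_{\gamma_0}$ to be the part of $G_{\gamma_0}$ straddling the fold lines (near $(1/2,1/2)$), removes from $G_{\gamma_0}$ the union $G_0$ of all backward preimages of $\widetilde G_{\gamma_0}$ that stay in $G_{\gamma_0}$, and the real work is the measure estimate $\bm(G_0)\le 4\gamma_0\,\bm(G_{\gamma_0})$ in (\ref{inequality-lemma5.4}), obtained by counting components of $T^{-1}$ (two in general, up to four near $(1,1)$) against the contraction of measure under $T^{-1}$. What each approach buys: yours gives a uniformly bounded escape time and is more elementary, needing only genericity of one diagonal orbit; the paper's gives no uniform escape time but produces a $B_0$ occupying all but an $O(\gamma_0)$ fraction of $G_{\gamma_0}$, and this quantitative largeness is exactly what the corollary following Lemma \ref{5.4} uses (any set of measure $\ge\delta_1\bm(G_{\gamma_0})$ inside $G_{\gamma_0}$ must meet $B_0$ in at least half of its measure). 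So while your proof establishes the stated lemma, your tiny-box $B_0$ could not be substituted into that corollary's argument without reworking it; if you intend your construction to replace the paper's, you would need to upgrade it to show that the points of $G_{\gamma_0}$ which never reach $B_{\gamma_0}$ before re-entering the fold region form a set of small relative measure, which is precisely the content of the paper's estimate (\ref{inequality-lemma5.4}).
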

    \begin{Proof}
    Let $G_1=\{\bp \in G_{\gamma_0} |  x_1,x_2\le 1/2\}$, $G_2=\{\bp \in G_{\gamma_0} |  x_1,x_2\ge 1/2\}$, $G_3=\{\bp \in G_{\gamma_0} |  x_1\le 1/2, x_2\ge 1/2\}$, $G_4=\{\bp \in G_{\gamma_0} |   x_1\ge 1/2, x_2\le 1/2\}$ and define
     $\widetilde{G}_{\gamma_0}=G_3\cup G_4$.  Let $\widetilde{G}_{-1}=T^{-1}(\widetilde{G}_{\gamma_0})\cap G_{\gamma_0}$, $\widetilde{G}_{-2}=T^{-1}(\widetilde{G}_{-1})\cap G_{\gamma_0}$, $\cdots, \widetilde{G}_{-(k+1)}=T^{-1}(\widetilde{G}_{-k})\cap G_{\gamma_0}$. Denote $G_0=\cup_{k=0}^{\infty}\widetilde{G}_{-k}$.

 Now, we define $B_0\equiv G_{\gamma_0}\backslash  G_0$.  Then for each point $\bp \in B_0$, there exists a finite time $l=l(\bp )$ such that
    $T^{l}(\bp )\in B_{\gamma_0}$.  In fact, from the definition of the set $G_0$ and the map $T$, for each point $\bp \in B_0$ satisfying $T^{i-1}(\bp )\in G_{\gamma_0}$, it holds that $ | x_1(i)-x_2(i) | \ge (1-2c)(2-\eta)   | x_1(i-1)-x_2(i-1) | $, where $(x_1(j),x_2(j))=T^{j}(\bp )$ for $j=i-1, i$. Hence, the fact that $(1-2c)(2-\eta)>1$ with $c,\eta$ small implies that there exists some $l$ such that $T^{l}(\bp )$ is out of the region $G_{\gamma_0}$, i.e. enter into $B_{\gamma_0}$.

    Thus it is sufficient to prove that
    $\bm(G_0)<\bm(G_{\gamma_0})$. Since the diameter of $\widetilde{G}_{\gamma_0}$ is small for small $\gamma_0$, from the expansibility of $T$, we obtain that each component $S$ of $T^{-i}(\widetilde{G}_{\gamma_0})$ possesses a small diameter for $i>0$. Thus,
     we have that
    \begin{align*}
     {\rm Case {\rm (i)}.}\quad & \#^C( T^{-1}(S)\cap G_{\gamma_0})=2 \quad {\rm if} \ \mathrm{dist} (S, (1, 1))>3\gamma_0;\\
      {\rm Case {\rm (ii)}.}\quad & \#^C( T^{-1}(S)\cap G_{\gamma_0})\le 4 \quad{\rm if} \ \mathrm{dist} (S, (1, 1))\le 3\gamma_0,\end{align*}
          where $\#^C(S)$ denotes the number of components for the set $S$.

     On the other hand, the expansibility of $T$ implies that the measure of each component of $T^{-1}(S)$ is less than $((1-2c)(2-\eta)^2)^{-1}  \bm(S)$. Thus for case {\rm (i)} we have that $\bm(T^{-1}(S))\le 2((1-2c)(2-\eta)^2)^{-1}  \bm(S)\equiv  (2\hat{\lambda})^{-1}  \bm(S)$ and for case {\rm (ii)} we have that $\bm(T^{-1}(S))\le 4((1-2c)(2-\eta)^2)^{-1}  \bm(S)\equiv  \hat{\lambda}^{-1}  \bm(S)$ with $1-O(c+\eta)\le\hat{\lambda}\le 1$.

      Obviously, since $\gamma_0$ is small, we have that
     among $\{i,i+1,\cdots, i+10\}$, there is at most one number $j$ such that $\mathrm{dist} (T^{-j}(S), (1, 1))\le 3\gamma_0$.
     Thus it is not difficult to see that for $k=10l+j$ with $1\le j < 10$, it holds that
     $
     \bm(\widetilde{G}_{-k})\le b_k \bm(\widetilde{G}_{\gamma_0})
     $
     with $b_k = \left((2\hat{\lambda})^{-9}\hat{\lambda}^{-1}\right)^{l} (2\hat{\lambda})^{-(j-1)}  \hat{\lambda}^{-1}.$ Let $b_0=1$. For small $c$ and $\eta$ we can easily see that $\sum_{k=0}^{\infty}b_k\le 4.$  Hence we have
     \beq\label{inequality-lemma5.4}\bm(\cup_{k=0}^{\infty}\widetilde{G}_{-k})\le \sum_{k=0}^{\infty}b_k \bm(\widetilde{G}_{\gamma_0})\le 4\bm(\widetilde{G}_{\gamma_0})\le 4\gamma_0  \bm(G_{\gamma_0})<\bm(G_{\gamma_0}).
     \eeq
     This completes the proof of the lemma.\hfill\qedbox
    \end{Proof}

The disordered part (\ref{disorder-part}) can be easily obtained from the following corollary.
\begin{Corollary}
For almost each point $\bp $ in $G_{\gamma_0}$, there exists a finite time $l=l(\bp )$ such that
    $T^{l}(\bp )\in B_{\gamma_0}\cup B_0$, where $B_0$ is defined as in Lemma \ref{5.4}. \end{Corollary}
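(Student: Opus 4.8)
The plan is to derive the Corollary from Lemma~\ref{5.4} by an elementary (one-sided) Borel--Cantelli argument, the needed measure estimate being essentially the one already established in the proof of Lemma~\ref{5.4}. First I would reformulate the exceptional set. Put $W=\{\bp\in G_{\gamma_0}\mid T^l(\bp)\notin B_{\gamma_0}\cup B_0\ \text{for every}\ l\ge0\}$; it suffices to show $\bm(W)=0$. Since $B_{\gamma_0}=[0,1]^2\setminus G_{\gamma_0}$ and $B_0=G_{\gamma_0}\setminus G_0$, a one-line set computation gives $[0,1]^2\setminus(B_{\gamma_0}\cup B_0)=G_0$, whence $W=\bigcap_{l\ge0}T^{-l}(G_0)$. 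In particular every $\bp\in W$ satisfies $T^l(\bp)\in G_0\subseteq G_{\gamma_0}$ for all $l$, so the whole forward orbit of $\bp$ stays in $G_{\gamma_0}$.

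Next I would prove the inclusion $W\subseteq\bigcap_{N\ge1}\bigcup_{n\ge N}\widetilde{G}_{-n}$. Unwinding the inductive definition $\widetilde{G}_{-k}=T^{-1}(\widetilde{G}_{-(k-1)})\cap G_{\gamma_0}$ (with $\widetilde{G}_{-0}=\widetilde{G}_{\gamma_0}$), one sees that $\widetilde{G}_{-m}$ is precisely the set of $\bq\in G_{\gamma_0}$ with $T^j(\bq)\in G_{\gamma_0}$ for $0\le j\le m$ and $T^m(\bq)\in\widetilde{G}_{\gamma_0}$. Now let $\bp\in W$ and fix any $l\ge0$. Since $T^l(\bp)\in G_0=\bigcup_{k\ge0}\widetilde{G}_{-k}$, there is $k=k(l)\ge0$ with $T^l(\bp)\in\widetilde{G}_{-k(l)}$, hence $T^{l+k(l)}(\bp)\in\widetilde{G}_{\gamma_0}$; combined with the fact (established above) that $T^j(\bp)\in G_{\gamma_0}$ for all $j\ge0$, the description of $\widetilde{G}_{-m}$ yields $\bp\in\widetilde{G}_{-(l+k(l))}$. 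As $l$ ranges over all nonnegative integers this gives infinitely many indices $m=l+k(l)\ge l$ with $\bp\in\widetilde{G}_{-m}$, i.e. $\bp\in\bigcap_{N}\bigcup_{n\ge N}\widetilde{G}_{-n}$.

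Finally I would apply the trivial half of Borel--Cantelli. In the proof of Lemma~\ref{5.4} it is shown that $\bm(\widetilde{G}_{-k})\le b_k\,\bm(\widetilde{G}_{\gamma_0})$ for a summable sequence $(b_k)_{k\ge0}$ with $\sum_{k\ge0}b_k\le4$; hence $\sum_{k\ge0}\bm(\widetilde{G}_{-k})\le 4\,\bm(\widetilde{G}_{\gamma_0})<\infty$, so $\bm\big(\bigcap_{N}\bigcup_{n\ge N}\widetilde{G}_{-n}\big)=0$ and therefore $\bm(W)=0$. This proves the Corollary; combined with Lemma~\ref{5.4} it shows that almost every point of $G_{\gamma_0}$ is eventually mapped into $B_{\gamma_0}$, and the measure-theoretic argument used immediately after Theorem~\ref{equal-asynchronization} then upgrades this to (\ref{disorder-part}). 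I do not expect a genuine obstacle: the only step requiring care is the bookkeeping identity $W=\bigcap_lT^{-l}(G_0)\subseteq\bigcap_{N}\bigcup_{n\ge N}\widetilde{G}_{-n}$, i.e. recognizing that an orbit which forever avoids $B_{\gamma_0}\cup B_0$ must return to the folding region $\widetilde{G}_{\gamma_0}$ at infinitely many times while never leaving $G_{\gamma_0}$, and hence lies in infinitely many $\widetilde{G}_{-n}$; once this is in place, the summability $\sum_n\bm(\widetilde{G}_{-n})<\infty$ that drives Borel--Cantelli has already been proved inside Lemma~\ref{5.4}.
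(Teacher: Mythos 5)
Your proof is correct, but it follows a genuinely different route from the paper's. You reduce the Corollary to the easy half of Borel--Cantelli: after checking the set identity $[0,1]^2\setminus(B_{\gamma_0}\cup B_0)=G_0$ (valid since $G_0\subset G_{\gamma_0}$ by construction) and the unwound description of $\widetilde{G}_{-m}$, you observe that any point whose whole forward orbit avoids $B_{\gamma_0}\cup B_0$ must lie in $\widetilde{G}_{-m}$ for arbitrarily large $m$, and then the summable bound $\sum_k \bm(\widetilde{G}_{-k})\le \sum_k b_k\,\bm(\widetilde{G}_{\gamma_0})\le 4\,\bm(\widetilde{G}_{\gamma_0})$, which is exactly what is established inside the proof of Lemma \ref{5.4} on the way to (\ref{inequality-lemma5.4}), kills the exceptional set. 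The paper instead argues by contradiction with its standing machinery: assuming a positive-measure set $S\subset G_{\gamma_0}$ whose orbit never meets $B_{\gamma_0}\cup B_0$, it counts components (at most $22$ for $T^3$ of a subset of $G_{\gamma_0}$, using the special structure of $G_3\cup G_4$) and invokes Corollary \ref{Corollary} of the Iteration Lemma \ref{iteration-L} with $a=22$, $m_0=3$ to blow a subset $\widetilde{S_0}\subset S$ up to measure $\ge\delta_1\bm(G_{\gamma_0})$, after which the bound $\bm(G_0)\le 4\gamma_0\bm(G_{\gamma_0})\ll\delta_1\bm(G_{\gamma_0})$ forces a positive-measure piece of $T^l(\widetilde{S_0})$ into $B_0$, a contradiction. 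What each buys: your argument is shorter and completely bypasses the expansion/component-counting apparatus for this step, needing only the tail summability already proved (note, though, that you are citing an estimate from inside the proof of Lemma \ref{5.4} rather than its statement, so it is worth saying so explicitly); the paper's argument stays within the iteration-lemma framework used throughout, produces a quantitative version (a definite fraction of any candidate bad set reaches $B_0$ at a definite scale), and is the template that also carries over verbatim to the multi-node case in Section 6. Both proofs ultimately rest on the same geometric input, namely the geometric decay of $\bm(\widetilde{G}_{-k})$.
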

    \begin{Proof}
    Assume the conclusion is not true. Then there exists a set $S\subset G_{\gamma_0}$ with a positive measure such that for each $i$ it holds that $T^i(S)\cap (B_{\gamma_0}\cup B_0)=\emptyset.$ We will prove that there exists a subset $S_0\subset S$ with a positive measure and $l=l(S_0)$ such that $T^l(S_0 ) \subset B_{\gamma_0} \cup B_0 $. From the contradiction, we end the proof.

    For this purpose, we claim that there exists a subset $\widetilde{S_0}\subset S$ and $l\in\NN$ such that $\bm(T^{l}(\widetilde{S_0}))\ge \delta_1 \bm(G_{\gamma_0})$.

    Then the Corollary can be obtained from the claim. In fact, since $T^{l}(\widetilde{S_0})\subset T^{l}(S)\subset G_{\gamma_0}$ and $B_0=G_{\gamma_0}\backslash G_0$, we have $T^{l}(\widetilde{S_0})\backslash B_0\subset G_0$, where $G_0$ is defined as in the proof of Lemma \ref{5.4}. Note that $T$ is a diffeomorphism on each small square, thus the image of each measurable set under $T$ is still measurable.
    Thus $$\bm(T^{l}(\widetilde{S_0})\cap B_0)= \bm(T^{l}(\widetilde{S_0}))-\bm(T^{l}(\widetilde{S_0})\backslash B_0)\ge \bm(T^{l}(\widetilde{S_0}))-\bm(G_0).$$ Then from the claim and (\ref{inequality-lemma5.4}), we have
    $$
    \frac{\bm(T^{l}(\widetilde{S_0})\cap B_0)}{\bm(T^{l}(\widetilde{S_0}))}\ge \frac{\bm(T^{l}(\widetilde{S_0}))-\bm(G_0)}{\bm(T^{l}(\widetilde{S_0}))}\ge 1-\frac{4\gamma_0}{\delta_1}\ge 1/2, \quad {\rm since\ \gamma_0\ll \delta_1}.
    $$
    Define $S_0\subset \widetilde{S_0}$ such that $T^{l}(S_0)=T^{l}(\widetilde{S_0})\cap B_0\subset B_0$. Then $T^{l}(S_0)$ has a positive measure, which implies the measure of $S_0$ is positive. Thus the conclusion is obtained.

    Next we prove the existence of $\widetilde{S_0}$.
   From the definition of $G_i,\ i=1, 2, 3, 4$ and the number of components of a set in $[0, 1]^2$, we have that
    for any subset $S$ of $G_3$ or $G_4$, $T^j(S)$ has only one component for each $j=1, 2, \cdots, 10$. Recall that for any subset $S$ of $G_1$ or $G_2$, $T(S)$ has at most 4 components (see Figure \ref{fig:5.1}).

    Without loss of generality, we assume that $S\subset G_1$. Then $T(S)$ has at most 4 components among which 2 components, say $S_{1}$ and $S_{2}$ lie in $G_1$ and $G_2$, respectively, and $S_{3}$ and $S_{4}$ lie in $G_3$ and $G_4$. Subsequently, $T(S_{1})$ or $T(S_{2})$ have at most $4$ components denoted by $S_{i,j}$ $i=1,2,\ j=1, 2, 3, 4$ in a similar way, while $T(S_{3})$ or $T(S_{4})$ has only one components denoted by $S_{i,1}$, $i=3, 4$. Hence, all $T(S_{1,i}),\ i=1, 2, 3, 4$ totally have $10$ components. Moreover, among them there are six components, that is $S_{i,1}$, $i=3, 4$ together with $S_{i,j}$ $i=1,2,\ j=3, 4$, satisfy that the image of each of them has only one component.

\begin{figure}[H]
\centering
\includegraphics[height = 10 cm, width = 14 cm]{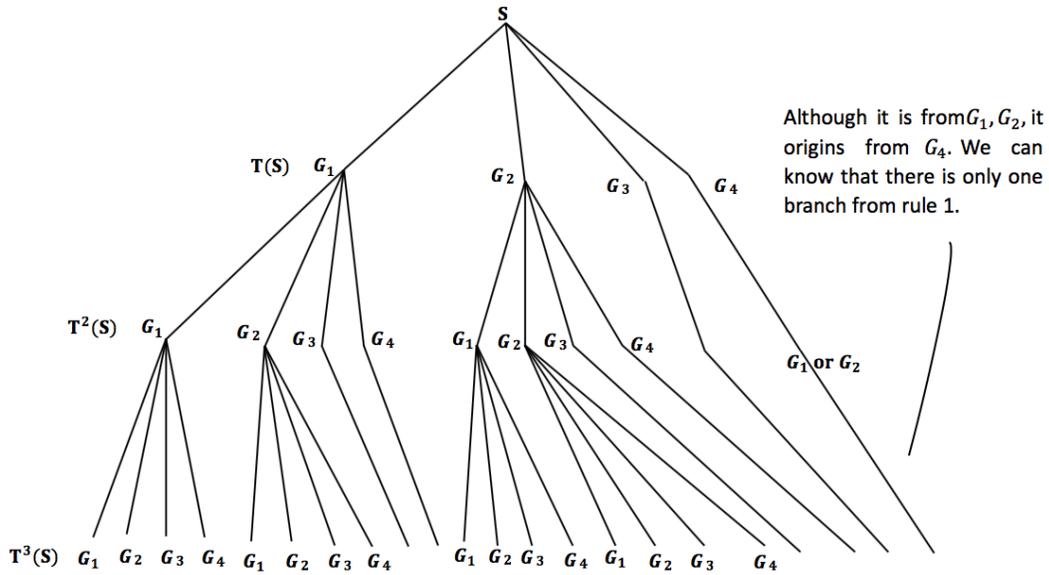}
\caption{Evolution on components of $T^j(S),\ j=1,2,3.$ }
\label{fig:5.1} 
\end{figure}

    By induction, we can prove that for $i\le 10$, it holds that the sum of all components for $T^i(S)$ is $2^{i+1}+2^i-2$.
 In particular, the sum of all components for $T^3(S)$ is 22.

    On the other hand, $\bm(T^3(S))\ge (4(1-2c-2\eta))^3  \bm(S)$. When $c,\eta$ are small, we have
     that $(1-2c)(2-\eta)^2>(22)^{1/3}$. Applying  Corollary \ref{Corollary} by setting $a=22, m_0=3, E_-(c)=(1-2c)(2-\eta)^2, E_+(c)=(1-2c)(2+\eta)^2,\delta_1=2^{-16},\gamma_0=2^{-20}$ and
     $D=G_{\gamma_0}$,  we obtain the existence of $\widetilde{S_0}$. The proof is complete.\hfill\qedbox
    \end{Proof}

    \section{The higher dimensional case}
    In this section, we will prove Theorem \ref{multi-node-linear} for coupled tent map lattices with multi-node.

    The proof for the ordered part in the multi-node case is quite different from for two-nodes case. The observation is as follows.

    Recall that the phase space $[0, 1]^m=\cup D_J$, where  $m$ is the dimension of the phase space and $D_J$ are $2^m$ small hypercubes in the phase space divided by the planes $x_i=1/2,\ 1\le i\le m$. For each convex $\Omega$ in some $D_{J_0}$, we have that $\bm(T(\Omega))\approx 2^m  \bm(\Omega)$. Clearly $T(\Omega)$ will either has an intersection with each of $2^m$ small hypercubes $D_J$ simultaneously, or there exists at least one hypercube which has no intersection point with $T(\Omega)$. Note that all $T(\Omega)\cap D_J$ are still convex. Once the former case occurs, from the convexity we can prove that $T(\Omega)$ has an intersection with the diagonal $D_{\mathrm{syn}}$, which again by convexity implies the existence of a set of `good' points occupying a fixed ratio in $\Omega$. Otherwise, suppose the latter case occurs in each iteration step. Then in each iteration step, averagely it holds that $\bm(T(\Omega)\cap D_J)\ge c  \bm(\Omega)$ with $c\approx 2^m/(2^m-1)>1$ for each $J$. Consequently, the measure for most components of $T^k(\Omega)$  will keep increasing until it is of constant order as $k$ increases. Thus we also obtain the existence of a set of `good' points occupying a fixed ratio in $\Omega$ by convexity.

    To prove the ordered part (\ref{order-part}), we first have the following result.
    \begin{Lemma}\label{center-point}
    If $\Omega$ is a convex set in one of the $2^m$ small hypercubes $D_J$  and $T(\Omega)$ has an intersection with each small hypercube simultaneously. Then the center point $x_1=x_2=\cdots =x_m=1/2$ lies in $T(\Omega)$. Moreover, there exists a fixed number $c_0>0$ such that for any $\epsilon>0$, we have that $\bm(T(\Omega)\cap G_{\epsilon})/\bm(T(\Omega))\ge c_0  \epsilon^m$, where $$G_{\epsilon}=\{\bp\in [0,1]^m |  \mathrm{dist}(\bp,D_{\mathrm{syn}})\le \epsilon\}.$$
   \end{Lemma}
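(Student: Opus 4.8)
The plan is to exploit that $T$ is \emph{affine} on each of the $2^m$ small hypercubes, so that $T(\Omega)$ is a bounded convex set, and then to prove both assertions by elementary convex geometry relative to the center point $\bp_c:=(1/2,\dots,1/2)$, which lies on the diagonal $D_{\mathrm{syn}}$. First I would fix the hypercube $D_{J_0}\supseteq\Omega$ and note that $T|_{D_{J_0}}=(I+cA)\circ(\mathbf f|_{D_{J_0}})$ is affine, since each branch of the tent map is linear and $\mathbf f|_{D_{J_0}}$ maps $D_{J_0}$ bijectively (and affinely) onto $[0,1]^m$; hence $T|_{D_{J_0}}$ is an affine homeomorphism of $D_{J_0}$ onto the parallelepiped $\Pi:=(I+cA)([0,1]^m)$. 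Consequently $T(\Omega)$ is convex and $\overline{T(\Omega)}=T(\overline\Omega)$. Since the quantitative conclusion is measure-theoretic and unaffected by passing to closures, I may assume $\Omega$, hence $K:=T(\Omega)$, is compact and convex; note $K\subseteq\Pi$, so $\mathrm{diam}(K)\le R$ with $R:=\|I+cA\|\sqrt m\le 2\sqrt m$ for $c$ small, a bound independent of $\Omega$ and of $J_0$.

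For the first assertion ($\bp_c\in K$) I would use a separating hyperplane. Suppose $\bp_c\notin K$. By strict separation of a point from a compact convex set there exist $v\ne 0$ (with sign fixed suitably) and $\delta>0$ with $\langle v,\bx-\bp_c\rangle\ge\delta$ for every $\bx\in K$. Now choose the sign pattern $\sigma$ with $\sigma_i=-\mathrm{sgn}(v_i)$ when $v_i\ne 0$ and $\sigma_i=1$ otherwise, and let $D_{J(\sigma)}$ be the corresponding small hypercube. For any $\bx\in D_{J(\sigma)}$ write $x_i-1/2=\sigma_i s_i$ with $s_i\ge 0$; then
\[
\langle v,\bx-\bp_c\rangle=\sum_i v_i\sigma_i s_i=-\sum_{i:\,v_i\ne 0}|v_i|\,s_i\le 0 .
\]
This contradicts the hypothesis that $K$ meets $D_{J(\sigma)}$, so $\bp_c\in K$; that is, the center point lies in $T(\Omega)$.

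For the measure estimate I would simply contract $K$ toward the center point $\bp_c$. Put $K':=(1-\epsilon/R)\,\bp_c+(\epsilon/R)\,K$. Since $\bp_c\in K$ and $K$ is convex, $K'\subseteq K$, and $\bm(K')=(\epsilon/R)^m\,\bm(K)$. Moreover $\mathrm{diam}(K')\le(\epsilon/R)R=\epsilon$, so every point of $K'$ lies within $\epsilon$ of $\bp_c$; as $\bp_c\in D_{\mathrm{syn}}$ we have $\mathrm{dist}(\cdot,D_{\mathrm{syn}})\le|\cdot-\bp_c|$, hence $K'\subseteq G_\epsilon$. Therefore
\[
\bm(T(\Omega)\cap G_\epsilon)\ \ge\ \bm(K')\ =\ (\epsilon/R)^m\,\bm(K)\ \ge\ R^{-m}\,\epsilon^m\,\bm(T(\Omega)),
\]
which is the claim with $c_0:=R^{-m}\ge(2\sqrt m)^{-m}$, depending only on $m$ (and, weakly, on $c$).

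There is no deep obstacle here; the care needed is mostly bookkeeping. The three points I would be careful about are: (i) correctly invoking affineness of $T$ on a single hypercube, so that $T(\Omega)$ is genuinely convex with a uniformly bounded diameter; (ii) the closed-versus-open subtlety in ``the center lies in $T(\Omega)$'' — the separation argument literally gives only $\bp_c\in\overline{T(\Omega)}$, which is harmless since the statement is really intended up to sets of measure zero (equivalently one takes $\Omega$ closed); and (iii) the inequality $\bm(K\cap G_\epsilon)/\bm(K)\ge c_0\epsilon^m$ is only meaningful (and only needed) for $\epsilon\le R$, since for larger $\epsilon$ the left-hand side is already $1$. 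The single genuinely geometric input, that a convex set meeting all $2^m$ closed hypercubes must contain the center, is exactly the separating-hyperplane step, and I expect that to be the conceptual heart of the argument.
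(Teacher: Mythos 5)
Your proof is correct, and it reaches both conclusions by a route that differs from the paper's in both halves. For membership of the center, the paper argues by induction on the dimension: it slices $T(\Omega)$ with the hyperplane $x_m=1/2$, observes that the slice is convex and meets all $2^{m-1}$ sub-hypercubes of that hyperplane, and reduces to the $(m-1)$-dimensional statement, with the two-dimensional base case essentially asserted via the convex hull of four points, one per quadrant. Your separating-hyperplane argument proves the same fact for every $m$ in one stroke (choose the hypercube whose sign pattern opposes the separating direction), and in fact it is exactly what is needed to justify the paper's base case rigorously; it also makes explicit the ingredient both proofs rely on, namely that $T$ is affine on each small hypercube so that $T(\Omega)$ is convex. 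For the quantitative estimate, the paper builds the cone of lines joining the center $\bp_0$ to $S_\epsilon\cap T(\Omega)$ and compares lengths along each line ($\le\sqrt m$ inside the cube versus $\ge\epsilon$ inside $G_\epsilon$), obtaining $c_0=m^{-m/2}$, whereas you contract $T(\Omega)$ toward the center by the homothety of ratio $\epsilon/R$, getting $c_0=R^{-m}\ge(2\sqrt m)^{-m}$; both constants depend only on $m$ (and weakly on $c$), and your version is shorter and avoids the cone bookkeeping. Two minor points: as you note, your closure reduction literally gives only $\bp_c\in\overline{T(\Omega)}$; if you want membership in $T(\Omega)$ itself, apply your separation argument to the convex hull of one chosen point $\mathbf{q}_J\in T(\Omega)\cap D_J$ per hypercube, which is compact and contained in $T(\Omega)$ by convexity, after which the homothety step $K'\subseteq K$ needs no caveat. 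This is a cosmetic fix, not a gap.
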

    \begin{Proof}
    We prove the first conclusion by induction.   For $m=2$, let $$D_{ij}=\{(x_1,x_2) \in [0,1]^2 |  \frac12 (i-1)\le x_1\le \frac12 i,\
    \frac12 (j-1)\le x_2\le \frac12 j\},$$ $i, j=1, 2,$ be all the small hypercubes. From the condition, we have that for each pair $(i,j)$, there exists a point $\bp _{ij}\in D_{ij}\cap T(\Omega)$. Then from the convexity, we have that the convex hull determined by these points is a subset of $T(\Omega)$ and the point $(1/2, 1/2)$ is in it. Thus the first conclusion is proved.

    Assume the first conclusion holds true for $k=2, \cdots, m-1$. For the case $k=m$, consider $\widehat{D}_{1/2}=\{(x_1,\cdots, x_m)\in [0,1]^m |  x_m=1/2\}$. Obviously it consists of $2^{m-1}$ small $(m-1)$-dimensional hypercubes determined by the planes $x_i=1/2,\ i=1, \cdots, m-1$.  We have that $\widehat{D}_{1/2}\cap T( \Omega)$ is nonempty and convex by the convexity of $\Omega$, since in $T(\Omega)$ there exist both points with $x_m<1/2$ and the ones with $x_m>1/2$. Furthermore, the condition implies that the intersection between $T(\Omega)$ and each small hypercubes of $\widehat{D}_{1/2}$ is nonempty. Thus applying inductive assumption for $m-1$ on $\widehat{D}_{1/2}\cap T( \Omega)$ and $\widehat{D}_{1/2}$, we have that the point $(1/2, \cdots, 1/2)\in \widehat{D}_{1/2}\cap T( \Omega)$, which leads to the first conclusion.

    For the second conclusion, let $S_{\epsilon}$ be the cylinder $\{\bp\in [0,1]^m |  \mathrm{dist}(\bp,D_{\mathrm{syn}})= \epsilon\}$ whose axis is the diagonal of the phase space.


    Define $\widehat{S}_{\epsilon}=S_{\epsilon}\cap T(\Omega)$ and let $\Gamma$ be the point set of the union of all lines connecting $\widehat{S}_{\epsilon}$ and the point $\bp _0=(1/2, \cdots, 1/2)$. From the first conclusion it holds that $\bp _0\in T(\Omega)$. Hence, if $\widehat{S}_{\epsilon}$ is empty, then the convexity of $T(\Omega)$ leads that $T(\Omega)\subset G_{\epsilon}$ and the proof is complete. Thus we assume that $\widehat{S}_{\epsilon}$ is nonempty. Obviously, $T(\Omega)\backslash (\Gamma\cap T(\Omega))\subset T(\Omega)\cap G_{\epsilon}$. Thus for our purpose, we only need to analyze $\Gamma\cap T(\Omega)$.
    Again by the convexity we have that $\Omega_{\epsilon}\equiv  \Gamma\cap G_{\epsilon}\subset T(\Omega)$, which further implies \begin{equation}\label{omega-epsilon}\Omega_{\epsilon}= (\Gamma\cap T(\Omega))\cap G_{\epsilon}.\end{equation} To prove the second conclusion, it is sufficient to estimate $\bm(\Omega_{\epsilon})/\bm(\Gamma\cap T(\Omega))$. Note that $\Gamma\cap T(\Omega) \subset \Gamma\cap [0, 1]^m$.
    For each line $L\in \Gamma$, we can easily see that the length $L\cap [0, 1]^m$ is less than $\sqrt{m}$, while the length of $L\cap  G_{\epsilon}$ is larger than $\epsilon$. For the definition of $\Omega_{\epsilon}$ and (\ref{omega-epsilon}), we thus have that
    $$\bm((\Gamma\cap T(\Omega))\cap G_{\epsilon}) = \bm(\Omega_{\epsilon})\ge (\frac{\epsilon}{\sqrt{m}})^m  \bm(\Gamma\cap [0, 1]^m)\ge (\frac{\epsilon}{\sqrt{m}})^m  \bm(\Gamma\cap T(\Omega)).$$ Hence we complete the proof of the lemma by setting $c_0=m^{-m/2}$.\hfill\qedbox
    \end{Proof}
    \vskip 0.4cm
  \noindent  Applying Lemma \ref{center-point}, we obtain the following result.
\begin{Proposition}\label{firststep-higher-dim} There exists a constant $c_1>0$ such that for any convex set $\Omega$ with a volume less than $10^{-m}$, there exist disjoint convexities $\Omega_i\subset \Omega$, $i=1, 2,\cdots,$ and a set $\Omega_0\subset \Omega$ which is a union of finite convexities such that {\rm(i)} for any $i\ge 1$, there exists a $l(i)$ such that $T^{l(i)}(\Omega_i)$ is a component of $T^{l(i)}(\Omega)$ and $\bm(T^{l(i)}(\Omega_i))\ge \delta_1$; {\rm(ii)} for any point $\bp \in \Omega_0$, there exists an $l(\bp )$ such that $T^{l(\bp )}(\bp )\in G_{\epsilon}$;
{\rm(iii)}$\bm(\cup_{i\ge 0} \Omega_i)\ge c_1  \bm(\Omega).$
\end{Proposition}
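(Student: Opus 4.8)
The plan is to run the iteration scheme of Lemma~\ref{iteration-L} and Corollary~\ref{Corollary} on the convex set $\Omega$, with one extra device suited to the higher-dimensional geometry: a \emph{pruning} step that removes a convex piece from the iteration the instant its image spreads over all $2^m$ hypercubes $D_J$, at which instant Lemma~\ref{center-point} supplies a fixed proportion of points falling into $G_\epsilon$. First I set up the elementary facts. Since $f$ is the standard tent map, $|f'|\equiv 2$ and $T$ is piecewise affine: on each $D_J$ it is an affine bijection onto $[0,1]^m$ with constant Jacobian determinant $E(c):=|\det(I+cA)|\,2^m$, and $E(c)>2^m-1>1$ once $c<c_1$ (because $|\det(I+cA)|=1+O(c)$). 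Hence every component of $T^k(\Omega)$ is a convex set lying in one $D_J$ on which $T$ is affine, and $G_\epsilon\cap[0,1]^m$ is convex, being a sublevel set of the distance to the diagonal line. Splitting $\Omega$ into its convex components $\Omega\cap D_J$, it suffices to treat one convex $\Omega\subset D_{J_0}$; if already $\bm(\Omega)\ge\delta_1$ we take it for a single $\Omega_i$ with $l(i)=0$, so we may assume $\bm(\Omega)\le\delta_1=\delta_1\,\bm([0,1]^m)$.

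Next, the dichotomy: for a convex $P\subset D_J$ of small volume, either \textbf{(A)} the convex set $T(P)$ meets all $2^m$ hypercubes, in which case Lemma~\ref{center-point} gives $(1/2,\dots,1/2)\in T(P)$ and $\bm(T(P)\cap G_\epsilon)\ge c_0\,\epsilon^m\,\bm(T(P))$ with $c_0=m^{-m/2}$; the affine preimage $\widehat P\subset P$ of $T(P)\cap G_\epsilon$ is then convex, $T(\widehat P)\subset G_\epsilon$, and $\bm(\widehat P)\ge c_0\epsilon^m\bm(P)$; or \textbf{(B)} $T(P)$ misses a hypercube, hence has at most $2^m-1$ convex components. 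Now iterate $T$ on $\Omega$, and at each step, for each current convex piece $P$ (represented by the affine preimage $P_\ast\subset\Omega$ with $T^{l}\colon P_\ast\to P$ a homeomorphism, $l$ the step at which $P$ appeared): if $T(P)$ is of type (A), \emph{prune} $P$ --- add the preimage in $\Omega$ of $\widehat P$ to the collection forming $\Omega_0$ (so $T^{l+1}$ maps it into $G_\epsilon$) and discard the rest of $P$; if $T(P)$ is of type (B), continue with its at most $2^m-1$ convex components. Since pruned pieces leave the iteration, every piece still iterated produces at most $2^m-1$ components under one step of $T$, so the surviving system obeys the hypotheses of Lemma~\ref{iteration-L} and Corollary~\ref{Corollary} with $m_0=1$, $a=2^m-1<E(c)=E_\pm(c)$, $D=[0,1]^m$, and an admissible $\mu$. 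Re-running those proofs with the pruning inserted --- the only change being that some pieces are deleted at each scale, which can only lower the component counts $a^{\lfloor k/m_0\rfloor+1}$ used there, leaving all measure estimates untouched --- produces disjoint $\Omega_i^{(1)}\subset\Omega$ with $T^{k(i)}(\Omega_i^{(1)})$ a genuine component of volume $\ge\delta_0:=2^{-\mu^{N_0}}$, plus a \emph{finite} union of convex sets $\Omega_0$ (finite because the induction runs finitely many steps, each with finitely many pieces), and shows that the measure of $\Omega$ lying outside the pruned pieces and outside $\bigcup_i\Omega_i^{(1)}$ is at most $(1-c_1')\bm(\Omega)$ with $c_1'=\prod_{j\ge N_0}(1-F(c)2^{-d\mu^{j}})>0$ as in Corollary~\ref{Corollary}. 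As each pruned piece of preimage-measure $w$ puts at least $c_0\epsilon^m w$ into $\Omega_0$, while each $\Omega_i^{(1)}$ is kept entirely, we get $\bm\bigl(\Omega_0\cup\bigcup_i\Omega_i^{(1)}\bigr)\ge c_0\epsilon^m c_1'\,\bm(\Omega)$.

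It remains to reach the scale $\delta_1$. If $\delta_0\ge\delta_1$ we are done. If not, one bounded extra step: apply the dichotomy a uniformly bounded number $M=M(c,m,\delta_1)$ of times, with $(E(c)/(2^m-1))^{M}\ge 2\delta_1/\delta_0$, to each surviving component of volume $\ge\delta_0$. A type-(A) step again rescues a $c_0\epsilon^m$-fraction of that piece into $\Omega_0$; if instead all $M$ steps are of type (B), then after $M$ steps the component has image volume $\ge E(c)^M\delta_0$ spread over at most $(2^m-1)^M$ convex pieces, so the pieces of volume $<\delta_1$ carry at most half the mass and the others (volume $\ge\delta_1$) become new $\Omega_i$'s. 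Either way a fixed fraction of each volume-$\ge\delta_0$ component ends up either in $G_\epsilon$ (recorded in $\Omega_0$) or in a component of volume $\ge\delta_1$; this costs one more constant factor, and with a suitable $c_1>0$ we obtain (i), (ii), (iii).

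The hard part is the bookkeeping in the pruned iteration: one must check that deleting the type-(A) pieces does not disturb the geometric decay of the ``lost'' measure that powers Lemma~\ref{iteration-L} and Corollary~\ref{Corollary}. It does not, since removing pieces only decreases the relevant component counts while the removed mass is redirected into $\Omega_0$ with the guaranteed efficiency $c_0\epsilon^m>0$ --- but making this airtight means re-tracing the estimate of Lemma~\ref{iteration-L} with the pruning inserted and keeping straight the three fates of a point (pruned, frozen into some $\Omega_i$, or genuinely subdivided away). A secondary, purely technical matter is the passage from $\delta_0$ to the fixed $\delta_1=2^{-16}$ for large $m$, dealt with by the bounded post-processing above; convexity and affineness of the components, the strict inequality $E(c)>2^m-1$, convexity of $G_\epsilon\cap[0,1]^m$, and the finiteness of the set of pruned pieces are all routine.
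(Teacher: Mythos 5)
Your proposal is correct and takes essentially the same route as the paper: the dichotomy between a convex piece whose image meets all $2^m$ hypercubes (handled by Lemma \ref{center-point}, which feeds $\Omega_0$) and one whose image misses a hypercube and hence has at most $2^m-1$ components (handled by Lemma \ref{iteration-L} and Corollary \ref{Corollary} with $m_0=1$, $a=2^m-1$, $E_-(c)=2^m(1-F(A,c))>2^m-1$). The pruning bookkeeping and the upgrade from the scale $2^{-\mu^{N_0}}$ to $\delta_1$ that you spell out are exactly what the paper compresses into its ``without loss of generality'' reduction, so your extra care only makes explicit what the paper leaves implicit.
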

\begin{Proof}
From Lemma \ref{center-point}, without loss of generality we assume that there exists at least one small hypercube which has no intersection point with $T(\Omega)$ for any convex $\Omega$ in some small hypercube. Thus $T(\Omega)$ has at most $2^m-1$ components, denoted by $\Omega_i,\ i=1,\ \cdots,\ 2^m-1$ (some of them may be empty).

On the other hand,
 $|\mathrm{det}(JT(\bp))|=2^m|\mathrm{det}(I+cA)|\equiv 2^m(1-F(A,c))$, where $F(A,c)$
depends on $A^\top=A\in \RR^{m\times m}$ with $A\mathbf{e}=0$ and the coupling coefficient $c$ satisfying $F(A, c)\rightarrow 0$ as $c\rightarrow 0$.
Then we have
$$|\mathrm{det}(JT(\bp))|>2^m-1,\quad {\rm for\ small}\ c.$$

Thus from Iteration Lemma \ref{iteration-L} and Corollary \ref{Corollary} with $m_0=1$, $a=2^m-1$ and $E_-(c)=2^m(1-F(A,c))$,  we complete the proof of this proposition.\hfill\qedbox
\end{Proof}

The ordered part (\ref{order-part}) for the case of multi-node can be reduced to the following
result.
\begin{Proposition} \label{step2-higher-dim}
For any $\epsilon>0$, there exists a fixed number $c_2>0$ depending on $\epsilon$ such that  for any convex set $\Omega_0$ in some small hypercube with $\bm(\Omega_0)\ge\delta_1$, there exist disjoint convexities $\Omega_i\subset \Omega_0$, $i=1, 2,\cdots,$ such that {\rm(i)} for any $i\ge 1$, there exists an $l(i)$ such that $T^{l(i)}(\Omega_i)\subset G_{\epsilon}$ and
{\rm(ii)}$\bm(\cup_{i\ge 1} \Omega_i)\ge c_2  \bm(\Omega_0).$
\end{Proposition}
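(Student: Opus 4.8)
The plan is to follow the scheme of Proposition \ref{step2-L2.2}, with the elementary fact ``a sufficiently long segment in $[0,1]^2$ must cross $D_{\mathrm{syn}}$'' replaced by its multi-node analogue coming from Lemma \ref{center-point}: ``a convex body that meets all $2^m$ small hypercubes simultaneously must contain the centre $(1/2,\dots,1/2)$, and then a fixed fraction of it lies in $G_\epsilon$''. Fix $\epsilon>0$. The core is a one-step dichotomy. For $c$ small, $T$ is affine on each $D_J$ with $|\det(JT)|=2^m|\det(I+cA)|$, so for a convex $\Omega\subset D_J$ the image $T(\Omega)$ is convex and the hyperplanes $x_i=1/2$ cut it into at most $2^m$ convex pieces, each lying in one hypercube. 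Hence either \textbf{(A)} $T(\Omega)$ meets all $2^m$ hypercubes, in which case Lemma \ref{center-point} gives $\bm(T(\Omega)\cap G_\epsilon)\ge c_0\,\epsilon^m\,\bm(T(\Omega))$; or \textbf{(B)} $T(\Omega)$ misses some hypercube, hence has at most $2^m-1$ components, and its largest component $\Omega'$ is convex, lies in one hypercube, and satisfies
\[
\bm(\Omega')\ \ge\ \frac{2^m|\det(I+cA)|}{2^m-1}\,\bm(\Omega)\ \ge\ (1+e)\,\bm(\Omega),
\]
where $e=\dfrac{2^m|\det(I+cA)|}{2^m-1}-1>0$ because $|\det(I+cA)|=1-F(A,c)\to1$ as $c\to0$ (here $F(A,c)$ is the quantity from the proof of Proposition \ref{firststep-higher-dim}).

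Next I would iterate. Starting from $\Omega_0$, apply the dichotomy; while case (B) holds, replace the current set by its largest component and repeat. In case (B) the volume is multiplied by at least $1+e>1$, while any convex set lying in a small hypercube has volume $\le 2^{-m}$; therefore case (A) must occur after some number of steps $i_0\le\lfloor\log_{1+e}(1/\delta_1)\rfloor+1$, a constant depending only on $m$, $c$, $\delta_1$. Let $\widehat\Omega$ be the convex body reached at that point. Then $\widehat\Omega$ lies in one hypercube, $\bm(\widehat\Omega)\ge(1+e)^{i_0}\bm(\Omega_0)$, and $\widehat\Omega=T^{i_0}(\Omega_0')$ along a single inverse branch for a convex $\Omega_0'\subset\Omega_0$, each step being affine with Jacobian determinant $2^m|\det(I+cA)|$. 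Since $1+e=2^m|\det(I+cA)|/(2^m-1)$, this yields $\bm(\Omega_0')=(2^m|\det(I+cA)|)^{-i_0}\bm(\widehat\Omega)\ge(2^m-1)^{-i_0}\bm(\Omega_0)$.

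Finally, $T(\widehat\Omega)$ meets all $2^m$ hypercubes, so Lemma \ref{center-point} gives $\bm(T(\widehat\Omega)\cap G_\epsilon)\ge c_0\,\epsilon^m\,\bm(T(\widehat\Omega))$; as $G_\epsilon$ is a convex solid cylinder, $T(\widehat\Omega)\cap G_\epsilon$ is a union of at most $2^m$ convex pieces, one per hypercube. Pulling each piece back through the single affine branch $T^{-1}|_{\widehat\Omega}$ and then through $T^{-i_0}$ along the branch of $\Omega_0'$, I obtain disjoint convex subsets $\Omega_1,\Omega_2,\dots\subset\Omega_0$ with $T^{i_0+1}(\Omega_i)\subset G_\epsilon$ for every $i$. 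Because all these maps are affine, the volume ratios are exact, so
\[
\bm\Big(\bigcup_i\Omega_i\Big)=\big(2^m|\det(I+cA)|\big)^{-(i_0+1)}\bm(T(\widehat\Omega)\cap G_\epsilon)\ \ge\ c_0\,\epsilon^m\,\bm(\Omega_0')\ \ge\ c_0\,\epsilon^m\,(2^m-1)^{-i_0}\,\bm(\Omega_0),
\]
which gives (i) and (ii) with the fixed constant $c_2=c_0\,\epsilon^m\,(2^m-1)^{-i_0}>0$. The only genuine content is the dichotomy above — the multi-node substitute for the segment-crosses-the-diagonal step, and the observation that in the ``non-synchronizing'' branch the volume grows geometrically; I expect the main (minor) obstacle to be organizing this case (B) growth argument cleanly and checking that $e>0$ holds uniformly for all small $c$ and all $m$. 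The termination of the iteration and the volume bookkeeping are routine precisely because $T$ is piecewise affine, so, unlike Proposition \ref{firststep-higher-dim}, the Iteration Lemma is not needed here.
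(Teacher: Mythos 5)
Your proposal is correct and follows essentially the same route as the paper: the same dichotomy (either $T(\Omega)$ meets all $2^m$ hypercubes, so Lemma \ref{center-point} applies, or one passes to the largest convex piece, whose volume grows by the factor $2^m(1-F(A,c))/(2^m-1)>1$), iterated a bounded number of steps since $\bm(\Omega_0)\ge\delta_1$, followed by pulling back along the affine branches to get the measure bound. The only differences are cosmetic bookkeeping of the constant $c_2$, and your correct observation that the Iteration Lemma is not needed here.
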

\begin{Proof}
If $T(\Omega_0)\cap D_J\not=\emptyset$ for each small hypercube $D_J$ of $D$, then the conclusion follows from Lemma \ref{center-point}.

 Otherwise, there exist at most $2^m-1$ small hypercubes of $[0, 1]^m$ such that $T(\Omega_0)\cap D_J\not=\emptyset$. Recall that $\bm(T(\Omega_0))\ge 2^m  (1-F(A,c))  \bm(\Omega_0)$. Hence there exists some $D_J$ such that the convex set $\Omega_1=T(\Omega_0)\cap D_J$ possesses a volume larger than $$(2^m/(2^m-1))(1-F(A,c))  \bm(\Omega_0).$$ Similarly, assume that there exists at most $2^m-1$ small hypercubes of $[0, 1]^m$ such that $T(\Omega_1)\cap D_J\not=\emptyset$. Then
 repeating the above argument, we obtain a convex set $\Omega_{i+1}=T(\Omega_i)\cap D_J$ for some $D_J$ such that $$\bm(\Omega_{i+1})\ge 2^m/(2^m-1)  (1-F(A,c))  \bm(\Omega_i)\ge (2^m/(2^m-1)  (1-F(A,c)))^{i+1}  \bm(\Omega_0)$$ for any $i$. Since $F(A,c)\rightarrow 0$ and $\bm(\Omega_0)\ge \delta_1$, we have that $$(2^m/(2^m-1) (1-F(A,c)))^{j}  \bm(\Omega_0)>1$$ for small $c$ and some fixed $j=j(\delta_1)$. Hence there exists some $i<j$ such that $T(\Omega_i)\cap D_J\not=\emptyset$ for small hypercubes $D_J$ of $[0, 1]^m$, which leads to the conclusion with $$c_2>c_0  \bm(G_{\epsilon})  2^{-mj}\ge c_0  \epsilon^{m-1}  2^{-mj},$$ where $c_0$ is defined in Lemma \ref{center-point}.\hfill\qedbox
\end{Proof}

Similar to the proof of Theorem \ref{liminf}, we can easily obtain (\ref{order-part}) from Proposition \ref{step2-higher-dim}. Thus we omit the details.
\vskip 0.2cm
The disordered part for the multi-node case is similar to the two-node case. Let $\gamma_0=\min\{2^{-20}, 2^{-m-4}\}$.
First we prove that
\begin{Lemma}\label{6.2} There exists a subset $B_0$ of $G_{\gamma_0}$ with a positive measure such that for each point $\bp \in B_0$, there exists a finite time $l=l(\bp )$ such that
    $T^{l}(\bp )\in B_{\gamma_0}$.
    \end{Lemma}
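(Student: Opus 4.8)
The plan is to transcribe the proof of Lemma~\ref{5.4} to dimension $m$, the only genuinely new ingredients being the role of $A\mathbf e=0$ in forcing expansion of the distance to $D_{\mathrm{syn}}$, and the bookkeeping of the number of preimage components near the corner $\bp_1:=(1,\dots,1)$. First I would replace the fold set $G_3\cup G_4$ by $\widetilde G_{\gamma_0}:=\{\bp\in G_{\gamma_0}\mid \min_i x_i\le \tfrac12\le \max_i x_i\}$, i.e. the part of the tube $G_{\gamma_0}$ on which $\mathbf f$ fails to be locally affine; then set $\widetilde G_{-1}:=T^{-1}(\widetilde G_{\gamma_0})\cap G_{\gamma_0}$, $\widetilde G_{-(k+1)}:=T^{-1}(\widetilde G_{-k})\cap G_{\gamma_0}$, $G_0:=\bigcup_{k\ge 0}\widetilde G_{-k}$, and $B_0:=\bigl(G_{\gamma_0}\setminus G_0\bigr)\setminus\bigcup_{l\ge0}T^{-l}(D_{\mathrm{syn}})$; the last removed piece has measure zero, so it does not affect the conclusion, and all these sets are measurable since $T$ is piecewise affine.

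For the escape mechanism, fix $\bp\in B_0$ and suppose for contradiction that $T^k(\bp)\in G_{\gamma_0}$ for every $k\ge0$. Then for each $k$ the definition of $B_0$ forces $T^k(\bp)\notin\widetilde G_{\gamma_0}$ (otherwise $\bp\in\widetilde G_{-k}\subset G_0$), so at $\bq:=T^k(\bp)$ all coordinates lie strictly on one side of $\tfrac12$ and $f$ is affine with slope $2\sigma$, $\sigma=\pm1$, on $[\min_i q_i,\max_i q_i]$. With $\bar q:=\tfrac1m\sum_i q_i$, the point $\bar q\mathbf e$ is the point of $D_{\mathrm{syn}}$ nearest $\bq$ and lies in the same hypercube, so $\mathbf f(\bq)-\mathbf f(\bar q\mathbf e)=2\sigma(\bq-\bar q\mathbf e)$ and hence $T(\bq)-T(\bar q\mathbf e)=2\sigma(I+cA)(\bq-\bar q\mathbf e)$. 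Since $A=A^\top$ and $A\mathbf e=0$, the matrix $I+cA$ fixes $\mathbf e$ and preserves $\mathbf e^\perp$, with $T(\bar q\mathbf e)\in D_{\mathrm{syn}}$ and $T(\bq)-T(\bar q\mathbf e)\in\mathbf e^\perp$; as the eigenvalues of $(I+cA)|_{\mathbf e^\perp}$ have modulus $\ge 1-c\|A\|$, we get $\mathrm{dist}(T(\bq),D_{\mathrm{syn}})\ge\lambda\,\mathrm{dist}(\bq,D_{\mathrm{syn}})$ with $\lambda:=2(1-c\|A\|)>1$ for $c$ small. Iterating, $\mathrm{dist}(T^k(\bp),D_{\mathrm{syn}})\ge\lambda^k\,\mathrm{dist}(\bp,D_{\mathrm{syn}})\to\infty$ (note $\mathrm{dist}(\bp,D_{\mathrm{syn}})>0$ by the choice of $B_0$), contradicting $\sup_{\bp'\in G_{\gamma_0}}\mathrm{dist}(\bp',D_{\mathrm{syn}})\le\gamma_0$. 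Hence some $T^l(\bp)\in B_{\gamma_0}$.

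It remains to show $\bm(G_0)<\bm(G_{\gamma_0})$. On each hypercube $D_J$, $T$ is affine with $|\det JT|=2^m|\det(I+cA)|=2^m(1-F(A,c))>2^m-1$ for $c$ small (as in Proposition~\ref{firststep-higher-dim}), so every component of $T^{-1}(S)$ has volume $\le (2^m(1-F(A,c)))^{-1}\bm(S)$. Next, for a small set $S$ tracking a near-diagonal point $t\mathbf e$, any near-diagonal preimage sits above a diagonal point $s\mathbf e$ with $f(s)$ near $t$, hence $s$ near $\tfrac t2$ or near $1-\tfrac t2$, unless $t$ is near $1$, in which case $s$ is near $\tfrac12$; therefore $T^{-1}(S)\cap G_{\gamma_0}$ has at most two components when $\mathrm{dist}(S,\bp_1)>3\gamma_0$, and at most $2^m$ components (the local branches of $T$ at $\bp_0:=(\tfrac12,\dots,\tfrac12)$) otherwise. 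Finally, by the dyadic structure of $f^{-1}$ — the points of $f^{-k}(\tfrac12)$ are of the form $j/2^k$ and stay $\ge 2^{-k}$ away from $1$ — the choice $\gamma_0=\min\{2^{-20},2^{-m-4}\}$ guarantees that along any backward orbit the exceptional ``$\le 3\gamma_0$ from $\bp_1$'' step occurs at most once in every block of $\gtrsim m$ consecutive iterations. Combining these three facts exactly as in Lemma~\ref{5.4} gives $\bm(\widetilde G_{-k})\le b_k\,\bm(\widetilde G_{\gamma_0})$ with $\sum_k b_k$ bounded by an absolute constant (each non-exceptional step contributes a factor $\le(2^{m-1}(1-F(A,c)))^{-1}<1$, each exceptional step a factor $\le(1-F(A,c))^{-1}$). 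Since $\widetilde G_{\gamma_0}$ lies in the ball of radius $2\sqrt m\,\gamma_0$ about $\bp_0$ while $\bm(G_{\gamma_0})$ is of order $\gamma_0^{m-1}$, we have $\bm(\widetilde G_{\gamma_0})\le C\gamma_0\,\bm(G_{\gamma_0})$, so $\bm(G_0)\le C'\gamma_0\,\bm(G_{\gamma_0})<\bm(G_{\gamma_0})$ for the given $\gamma_0$, i.e. $\bm(B_0)>0$.

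I expect the main obstacle to be, as in the two-node proof, the last step: verifying that the $2^m$-fold branching near $\bp_1$ is sufficiently rare along backward orbits that $\sum_k\bm(\widetilde G_{-k})$ still converges. This is precisely where the explicit smallness $\gamma_0\le 2^{-m-4}$ is needed (through the spacing of $f^{-k}(\tfrac12)$); everything else is a mechanical transcription of Lemma~\ref{5.4}.
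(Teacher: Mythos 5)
Your proposal is correct and follows essentially the same route as the paper's proof: the same fold set $\widetilde G_{\gamma_0}=G_{\gamma_0}\setminus(D_{J_1}\cup D_{J_2})$, the same backward sets $\widetilde G_{-k}$ and $B_0=G_{\gamma_0}\setminus G_0$, the same transverse expansion $\mathrm{dist}(T(\bq),D_{\mathrm{syn}})\ge 2(1-O(c\|A\|))\,\mathrm{dist}(\bq,D_{\mathrm{syn}})$ coming from $A\mathbf e=0$, and the same $2$ versus $2^m$ preimage-component count with the corner $(1,\dots,1)$ exceptional but rare, summed as a geometric series. Your only deviations are cosmetic but sound: projecting onto $\mathbf e^\perp$ and using the spectral bound instead of the paper's quadratic-form computation, explicitly deleting $\bigcup_l T^{-l}(D_{\mathrm{syn}})$ (which in fact tightens the paper's statement, since diagonal points never escape), and justifying the rarity of corner visits via the dyadic preimages of $1/2$ (where the spacing is $2^{-k-1}$, not $2^{-k}$ — an immaterial slip).
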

    \begin{Proof}
   Let $$D_{J_1}=\{(x_1,\cdots, x_n)\in D |  0\le x_i\le 1/2,\ {\rm for\ all}\ i\},$$
   $$D_{J_2}=\{(x_1,\cdots, x_n)\in D  |  1/2\le x_i\le 1,\ {\rm for\ all}\ i\}.$$ Define $\widetilde{G}_{\gamma_0}=G_{\gamma_0}\backslash (D_{J_1}\cup D_{J_2})$.
   Let $$\widetilde{G}_{-1}=T^{-1}(\widetilde{G}_{\gamma_0})\cap G_{\gamma_0},\ \widetilde{G}_{-2}=T^{-1}(\widetilde{G}_{-1})\cap G_{\gamma_0},\ \cdots,\  \widetilde{G}_{-(l+1)}=T^{-1}(\widetilde{G}_{-l})\cap G_{\gamma_0}.$$ Denote $G_0=\cup_{l=0}^{\infty}\widetilde{G}_{-l}$.  We claim that for each point $\bp \in B_0=G_{\gamma_0}\backslash G_0$, there exists some $l$ such that $T^{l}(\bp )$ is out of the region $G_{\gamma_0}$, i.e. enters into $B_{\gamma_0}$. Thus for our purpose it is sufficient to prove that
    $\bm(G_0)<\bm(G_{\gamma_0})$.
From the definition of the set $G_0$ and the map $T$,  for each point $\bp \in B_0$, it holds that $T^i(\bp )\not\in \widetilde{G}_{\gamma_0}$ for any $i$.  By the expansivity of $T$, we need to prove that if $T^{i-1}(\bp )\in G_{\gamma_0}$, then $\mathrm{dist} (T^i(\bp ), D_{\mathrm{syn}})\ge 2(1-\widehat{F}(A,c))  \mathrm{dist} (T^{i-1}(\bp ), D_{\mathrm{syn}})$, where $\widehat{F}(A,c)\rightarrow 0$ (see below for definition), as $c\rightarrow 0$.

In fact, for $\bx=[x_1,\cdots, x_m]^\top$, we have
     $$(\mathrm{dist} (\bx, D_{\mathrm{syn}}))^2={(2m)^{-1}}\sum_{i\not=j}(x_i-x_j)^2=\sum_{i=1}^m x_i^2-m^{-1}(\sum_{i=1}^mx_i)^2= \bx^T  (I-m^{-1}E_0)  \bx ,$$
     where $E_0=\mathbf{e}\mathbf{e}^\top $ with $\mathbf{e}=[1,\cdots, 1]^T $. Then it holds that
     $$(\mathrm{dist} (T(\bx), D_{\mathrm{syn}}))^2=(T(\bx))^T  (I-m^{-1}E_0)  T(\bx)=4\bx^T  (I+cA)^T(I-m^{-1}E_0)(I+cA)  \bx.$$
     From the condition of $A$ it follows that $AE_0=E_0A=0$. It implies that
     $$(I+cA)^T(I-m^{-1}E_0)(I+cA)=I-m^{-1}E_0+2cA+c^2A^2.$$ It leads that
     $$(\mathrm{dist} (T(\bx), D_{\mathrm{syn}}))^2=4\bx^T  (I-m^{-1}E_0+2cA+c^2A^2)  \bx=4(\mathrm{dist} (\bx, D_{\mathrm{syn}}))^2+4\bx^T  (2cA+c^2A^2)  \bx.$$
     Denote $\bar{\bx}=\frac1m\sum_{i=1}^m x_i$. Then we have
     \begin{align*}
     &\bx^T  (2cA+c^2A^2)  \bx\\
     =&(\bar{\bx}\mathbf{e}+(\bx-\bar{\bx}\mathbf{e}))^T  (2cA+c^2A^2)  (\bar{\bx}\mathbf{e}+(\bx-\bar{\bx}\mathbf{e}))\\
     =&(\bar{\bx}\mathbf{e})^T  (2cA+c^2A^2)  (\bar{\bx}\mathbf{e})+2(\bar{\bx}\mathbf{e})^T  (2cA+c^2A^2)  (\bx-\bar{\bx}\mathbf{e})\\&+
      (\bx-\bar{\bx}\mathbf{e})^T  (2cA+c^2A^2)  (\bx-\bar{\bx}\mathbf{e})
     \end{align*}
        Again from the condition of $A$, we have $$(\bar{\bx}\mathbf{e})^T  (2cA+c^2A^2)  (\bar{\bx}\mathbf{e})=0,\qquad 2(\bar{\bx}\mathbf{e})^T  (2cA+c^2A^2)=0.$$ On the other hand, it is easily seen  that $(\mathrm{dist} (\bx, D_{\mathrm{syn}}))^2=\sum_{i=1}^m (x_i-\bar{\bx})^2=(\mathrm{dist} (\bx,\bar{\bx}\mathbf{e}))^2$. Then we have that
           \begin{align*}
     & | \bx^T  (2cA+c^2A^2)  \bx |
     \le (2c\|A\|+c^2\|A\|^2)(\mathrm{dist} (\bx,D_{\mathrm{syn}}))^2.
  \end{align*}
     It follows that
     \begin{align*}&(\mathrm{dist} (T(\bx), D_{\mathrm{syn}}))^2\ge 4(1-(2c\|A\|+c^2\|A\|^2))(\mathrm{dist} (\bx, D_{\mathrm{syn}}))^2\\
     \ge& \left(2(1-(2c\|A\|+c^2\|A\|^2)^{1/2})\right)^2(\mathrm{dist} (\bx, D_{\mathrm{syn}}))^2\\
     \equiv &(2(1-\widehat{F}(A,c)))^2(\mathrm{dist} (\bx, D_{\mathrm{syn}}))^2.\end{align*}
     Thus we obtain the claim. Consequently there must exists $l$ such that $T^{l}(\bx )\in B_{\gamma_0}$.

     Next we estimate the measure of the set $G_0$. Since the diameter of $\widetilde{G}_{\gamma_0}$ is small for small $\gamma_0$, from the expansivity of $T$, we obtain that each component $S$ of $T^{-i}(\widetilde{G}_{\gamma_0})$ possesses a small diameter for $i>0$. Thus
     we have that
    \begin{align*}
     {\rm Case {\rm (i)}.}\quad & \#^C( T^{-1}(S)\cap G_{\gamma_0})=2 \quad {\rm  \ \ if} \ \mathrm{dist} (S, (1,\cdots, 1))>3\gamma_0;\\
      {\rm Case {\rm (ii)}.}\quad & \#^C( T^{-1}(S)\cap G_{\gamma_0})\le 2^m \quad{\rm if} \ \mathrm{dist} (S, (1, \cdots, 1))\le 3\gamma_0,
      \end{align*}
          where $\#^C(S)$ denotes the number of connected components for a set $S$.

      On the other hand, the expansibility of $T$ implies that the measure of each component of $T^{-1}(S)$ is less than $(2(1-\widehat{F}(A,c)))^{-m}  \bm(S)$. Thus for case {\rm (i)} we have that $\bm(T^{-1}(S))\le 2^{-m+1}(1-\widehat{F}(A,c)))^{-m}  \bm(S)$ and for case {\rm (ii)} we have that $$\bm(T^{-1}(S))\le (1-\widehat{F}(A,c))^{-m}  \bm(S).$$


 Obviously, since $\gamma_0$ is small, we have that
     there is at most one number $j$ in $\{i,i+1,\cdots,i+10\}$ such that $\mathrm{dist} (T^{-j}(S), (1,\cdots, 1))\le 3\gamma_0$.
     Thus it is not difficult to see that for $l=10k+j$ with $1\le j< 10$, it holds that
     $
     \bm(\widetilde{G}_{-l})\le d_l \bm(\widetilde{G}_{\gamma_0})
     $
     with $d_l = 2^{(-m+1) (9k+j-1)}  (1-\widehat{F}(A,c))^{-ml}.$ Let $d_0=1$. For small $c$ we can easily see that $\sum_{l=0}^{\infty}d_l\le 4.$  Hence if $0\le \gamma_0<\frac{1}{2^{m+3}}$ and $c$ is small, we have
          \begin{align}\label{measure-G_0}
     \bm(G_0)\le& \bm(\cup_{l=0}^{\infty}T^{-l}(\widetilde{G}_{\gamma_0}))\le \sum_{l=0}^{\infty}d_l \bm(\widetilde{G}_{\gamma_0})\le 4\bm(\widetilde{G}_{\gamma_0})\nn\\
     \le &4\gamma_0  \bm(G_{\gamma_0})<\frac{1}{2^{m+1}}\bm(G_{\gamma_0}).
    \end{align}
     This completes the proof of the lemma.\hfill\qedbox
    \end{Proof}

The disordered part (\ref{disorder-part}) for the case of multi-node  can be easily obtained from the following corollary.

\begin{Corollary}
For almost each point $\bp $ in $G_{\gamma_0}$, there exists a finite time $l=l(\bp )$ such that
    $T^{l}(\bp )\in B_{\gamma_0}\cap B_0$, where $B_0$ is defined as in Lemma \ref{6.2}. \end{Corollary}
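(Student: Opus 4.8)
The plan is to argue by contradiction, following the corresponding corollary in the two‑node case (the one immediately after Lemma~\ref{5.4}), with Lemma~\ref{6.2} in the role of Lemma~\ref{5.4} and an $m$‑dimensional component count in place of the ``$22$ components''. Assuming the statement fails, there is a positive‑measure set $S\subset G_{\gamma_0}$ with $T^i(S)\cap(B_{\gamma_0}\cup B_0)=\emptyset$ for all $i$; in particular $T^i(S)\subset G_{\gamma_0}$ for every $i$. The crux is the claim that there exist $\widetilde S_0\subset S$ and $l\in\NN$ with $\bm(T^{l}(\widetilde S_0))\ge\delta_1\bm(G_{\gamma_0})$. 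Granting it: since $T^{l}(\widetilde S_0)\subset T^{l}(S)\subset G_{\gamma_0}=B_0\sqcup G_0$ and $\bm(G_0)\le 4\gamma_0\bm(G_{\gamma_0})$ by \eqref{measure-G_0} with $4\gamma_0\ll\delta_1$, we get $\bm(T^{l}(\widetilde S_0)\cap B_0)\ge(\delta_1-4\gamma_0)\bm(G_{\gamma_0})>0$, so the set $S_0\subset\widetilde S_0$ with $T^{l}(S_0)=T^{l}(\widetilde S_0)\cap B_0$ has positive measure while $T^{l}(S_0)\subset B_0\subset B_{\gamma_0}\cup B_0$ — contradicting $T^{l}(S)\cap(B_{\gamma_0}\cup B_0)=\emptyset$.

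To prove the claim I would appeal to the Iteration Lemma~\ref{iteration-L} and Corollary~\ref{Corollary} with $D=G_{\gamma_0}$, once a uniform bound on the number of components produced by iterating $T$ inside $G_{\gamma_0}$ is established. The key geometric fact is that a connected subset of $G_{\gamma_0}$ can be folded only near the centre $\bp_0=(\tfrac12,\dots,\tfrac12)$, since $D_{\mathrm{syn}}$ meets the folding hyperplanes $\{x_i=\tfrac12\}$ only there. Consequently $G_{\gamma_0}\cap D_J$ lies in an $O(\gamma_0\sqrt m)$‑ball about $\bp_0$ for each of the $2^m-2$ ``mixed'' hypercubes $D_J$, while only the two extreme ones $D_{J_1}=\{x_i\le\tfrac12\ \forall i\}$ and $D_{J_2}=\{x_i\ge\tfrac12\ \forall i\}$ are traversed by the diagonal; and since $f(\tfrac12)=1$ and $(I+cA)\mathbf e=\mathbf e$ give $T(\bp_0)=(1,\dots,1)\equiv(0,\dots,0)$, a component localized near $\bp_0$ is carried to one localized near the corner $(1,\dots,1)\in D_{J_2}$, then near $(0,\dots,0)\in D_{J_1}$, and thereafter stays corner‑localized — hence is carried by a diffeomorphism to a single component — for as many steps as we wish once $\gamma_0$ is small. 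Bookkeeping as in the two‑node proof, a ``central'' component folds into at most $2^m$ children, at most two of which (in $D_{J_1},D_{J_2}$) can again be central and the other $2^m-2$ are tiny, corner‑localized pieces; writing $N_S(k),N_T(k)$ for the numbers of central and tiny components of $T^k$ of such a set, $N_S(k+1)\le 2N_S(k)$ and $N_T(k+1)\le(2^m-2)N_S(k)+N_T(k)$, so that $T^{m_0}$ of a component of an iterate that still lies in $G_{\gamma_0}$ and has measure $\le\delta_1\bm(G_{\gamma_0})$ has at most $a:=(2^m-1)2^{m_0}-(2^m-2)$ components (which is $22$ for $m=2$, $m_0=3$).

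Now $E_+(c)=E_-(c)=2^m|\det(I+cA)|=2^m(1-F(A,c))$ with $F(A,c)\to0$ as $c\to0$, and $m\,m_0-m-m_0=(m-1)(m_0-1)-1\ge1$ for $m\ge2$, $m_0=3$, so for $c$ small enough $E_-^{m_0}(c)=2^{m m_0}(1-F(A,c))^{m_0}>2^{m+m_0}>(2^m-1)2^{m_0}>a$ (throughout $m\ge2$, the case $m=1$ being trivial, as then $A=0$ and $D_{\mathrm{syn}}=[0,1]$). Hence, splitting $S$ into its finitely many hypercube pieces — each of measure $\le\bm(G_0)<\delta_1\bm(G_{\gamma_0})$, so none needs to be discarded — each piece is $(\delta_1,m_0,a)$‑good in the sense introduced before Lemma~\ref{iteration-L}, and Corollary~\ref{Corollary} yields disjoint $\Omega_i\subset S$ and times $k(i)$ with $T^{k(i)}(\Omega_i)$ a component of $T^{k(i)}(S)$ of measure $\ge 2^{-\mu^{N_0}}\bm(G_{\gamma_0})$; taking $\widetilde S_0=\Omega_1$ and $l=k(1)$ (and choosing $\gamma_0$ small enough that $2^{-\mu^{N_0}}\ge\delta_1$, or simply using $2^{-\mu^{N_0}}$ in place of $\delta_1$ throughout) proves the claim. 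Finally, with the Corollary in hand, \eqref{disorder-part} for the multi‑node system is deduced by the soft measure‑theoretic argument used in the two‑node case to pass from Theorem~\ref{equal-asynchronization} to \eqref{disorder-part}, together with Lemma~\ref{6.2} (every point of $B_0$, and every point outside $G_{\gamma_0}$, eventually enters $B_{\gamma_0}$).

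The main obstacle I anticipate is the component bound — checking, uniformly over all small components of all iterates that stay in $G_{\gamma_0}$, that a central component has at most $2^m$ children and that the mixed‑hypercube (corner‑localized) components remain non‑folding for the required $m_0$ steps, so that the recursion for $N_S,N_T$ is legitimate and $a<E_-^{m_0}(c)$ holds for $c$ small. Everything else is a transcription of the two‑node disordered argument.
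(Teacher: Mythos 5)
Your proposal is correct and follows essentially the same route as the paper: the same contradiction set-up, the same key claim that some $\widetilde S_0\subset S$ has $\bm(T^{l}(\widetilde S_0))\ge\delta_1\bm(G_{\gamma_0})$ combined with (\ref{measure-G_0}), and the same component bookkeeping splitting $G_{\gamma_0}$ into the two extreme hypercubes versus the centre-localized mixed ones before invoking Corollary \ref{Corollary}. Your general count $a=(2^m-1)2^{m_0}-(2^m-2)$ even coincides with the paper's explicit count $(2^3-1)(2^m-2)+2^3$ at $m_0=3$, so the only differences are minor bookkeeping details (e.g.\ explicitly checking $\bm(S)\le\bm(G_0)<\delta_1\bm(G_{\gamma_0})$ and the $2^{-\mu^{N_0}}$ versus $\delta_1$ threshold) that the paper leaves implicit.
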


\begin{Proof}
     Assume the conclusion is not true. Then there exists a set $S\subset G_{\gamma_0}$ with a positive measure such that for each $l$ it holds that $T^l(S)\cap (B_{\gamma_0}\cup B_0)=\emptyset.$ We will prove that there exists a subset $S_0\subset S$ with a positive measure and $l=l(S_0)$ such that $T^l(S_0 ) \subset B_{\gamma_0} \cup B_0 $. From the contradiction, we end the proof.

    For this purpose, we claim that there exists a subset $\widetilde{S_0}\subset S$ and $l\in\NN$ such that $\bm(T^{l}(\widetilde{S_0}))\ge  \delta_1 \bm(G_{\gamma_0})$.

    In fact, from the claim and (\ref{measure-G_0}), we have that
    $$
    \frac{\bm(T^{l}(\widetilde{S_0})\cap B_0)}{\bm(T^{l}(\widetilde{S_0}))}\ge \frac{\bm(T^{l}(\widetilde{S_0}))-\bm(G_0)}{\bm(T^{l}(\widetilde{S_0}))}\ge 1-\frac{4\gamma_0}{ \delta_1}\ge 1/2.
    $$
   Define $S_0\subset \widetilde{S_0}$ such that $T^{l}(S_0)=T^{l}(\widetilde{S_0})\cap B_0\subset B_0$. Then $T^{l}(S_0)$ has a positive measure, which implies the measure of $S_0$ is positive. Thus the conclusion is obtained.

   Next we prove the existence of $\widetilde{S_0}$. Let $G_{\gamma_0}=G_1\cup G_2 \cup G_3$, where $G_1={G}_{\gamma_0}\cap D_{J_1}$, $G_2={G}_{\gamma_0}\cap D_{J_2}$ and $G_3=G_{\gamma_0}\backslash (G_1\cup G_2)$. It is clear that for a subset $S$ of any component of $G_3$, $T^j(S)$ has only one component for each $j=1, 2, \cdots, 10$. Moreover, the image of any subset of $G_1$ or $G_2$ under $T$ has at most $2^m$ components.

    Then for a subset $S$ of $G_1$ or $G_2$, $T(S)$ has at most $2^m$ components among which two ones lie in $G_1\cup G_2$ and the other $2^m-2$ ones lie in $G_3$. Subsequently, $T^2(S)$ has at most $(2^m-2)+2\cdot 2^m$ components, among which 4 components lie in $G_1\cup G_2$ and $3  (2^m-2)$ others lie in $G_3$. Similarly, we have that $T^3(S)$ has at most $(2^3-1)(2^m-2)+2^3$ components, among which $(2^3-1)(2^m-2)$ components lie in $G_3$ and $2^3$ others lie in $G_1\cup G_2$.

    On the other hand, $\bm(T^3(S))\ge 2^{3m}(1-F(A,c))^{3}  \bm(S)$, where $F(A,c)$ is defined in Proposition \ref{firststep-higher-dim}. Obviously $2^{3m}(1-F(A,c))^{3m}>(2^3-1)(2^m-2)+2^3$ for $m\ge 2$ and small $c$. Thus from Corollary \ref{Corollary}, we obtain the existence of $\widetilde{S_0}$. Thus the proof is complete.\hfill \qedbox
    \end{Proof}

\section{Conclusion}
The clustering phenomenon intermittent  behaviors have been widely found in weakly coupled map lattices by numerical experiments but without mathematical proof. Among these phenomenon, pseudo synchronization, i.e., successive transition between ordered
and disordered phases, is the most difficult from the point of view of mathematics. In this paper, we provide a complete proof for pseudo synchronization for weakly coupled tent-map lattices with arbitrarily many nodes. For weakly coupled piecewise-expanding map lattices with 2 nodes, we also obtain the same result.
How to extract more information on the dynamical properties by this work and previous results, for example, of G. Keller et al. \cite{Keller-Liverani1,Keller-Liverani0}, is one of our future interest.
  We will also be interested in the change of dynamical behavior when  a strong coupling decreases to zero.
In addition, we will study  the weakly coupled piecewise-expanding map lattices with arbitrarily many nodes in the future.

 \end{document}